\numberwithin{equation}{section}
\numberwithin{table}{section}
\newtheorem{em-deff}{Definition}[section]
\newtheorem{lemma}[em-deff]{Lemma}
\newtheorem{theorem}[em-deff]{Theorem}
\newtheorem{corollary}[em-deff]{Corollary}
\newtheorem{proposition}[em-deff]{Proposition}
\newtheorem{em-fact}[em-deff]{Fact}
\newtheorem{em-example}[em-deff]{Example}
\newtheorem{problem}[em-deff]{Problem}
\newtheorem{em-remark}[em-deff]{Remark}
\newenvironment{example}{\begin{em-example} \em }{ \end{em-example}}
\newenvironment{deff}{\begin{em-deff} \em }{ \end{em-deff}}
\newenvironment{fact}{\begin{em-fact} \em }{ \end{em-fact}}
\newcommand{\N}{\mathbb N}
\newcommand{\Z}{\mathbb Z}
\newcommand{\Q}{\mathbb Q}
\newcommand{\F}{\mathcal F}
\newcommand{\R}{\mathbb R}
\newcommand{\J}{\mathbb J}
\def\f{\phi}
\def\ent{\mathrm{ent}}
\def\supp{\mathrm{supp}}
\def\Per{\mathrm{Per}}
\def\End{\mathrm{End}}
\def\cont{\mathfrak c}
\def\Sns{\mathfrak S_{ns}}
\def\S0{\mathfrak S_0}
\global\def\sc#1{\mathrm{sc}\,{#1}}
\title{String numbers of abelian groups}
\author{Anna Giordano Bruno
\\{\footnotesize {\tt  anna.giordanobruno@math.unipd.it}} 
\\{\footnotesize Dipartimento di Matematica Pura e Applicata,}
\\{\footnotesize Universit\`a di Padova,}
\\{\footnotesize Via Trieste, 63 - 35121 Padova}
 \and Simone Virili
 \\{\footnotesize {\tt Simone@mat.uab.cat}}
\\{\footnotesize Departament de Matematiques,}
\\{\footnotesize Universitat Autonoma de Barcelona, }
\\{\footnotesize Edifici C  - 08193 Bellaterra (Barcelona)}
 }
\date{}
\begin{document}

\maketitle


\abstract{The string number of self-maps arose in the context of algebraic entropy and it can be viewed as a kind of combinatorial entropy function. Later on its values for endomorphisms of abelian groups were calculated in full generality. We study its global version for abelian groups, providing several examples involving also Hopfian abelian groups. Moreover, we characterize the class of all abelian groups with string number zero in many cases and discuss its stability properties.}

\section{Introduction}

The strings and the string number of a self-map of a set were introduced in \cite{AADGH}, in order to compute the algebraic entropy of particular group endomorphisms (see Section \ref{ent-sec} for the definition of algebraic entropy). Then, motivated by open questions in \cite{AADGH} and results in \cite{GB}, in \cite{DGV} the string number of endomorphisms of abelian groups was studied and described in full generality. In this paper we study the ``global version'' of the string number for abelian groups.

\medskip
We now recall the definition of strings (from \cite{AADGH}) and of particular kinds of strings (from \cite{DGV}) recast in the context of abelian groups:

\begin{deff}
Let $G$ be an abelian group and $\phi:G\to G$ an endomorphism. A sequence $S=\{x_n\}_{n\in\N}\subseteq G$ is
\begin{itemize}
\item[(a)] a \emph{pseudostring} of $\phi$ if $\phi(x_n)=x_{n-1}$ for every $n\in\N_+$;
\item[(b)] a \emph{string} of $\phi$ if $S$ is a pseudostring and its elements are pairwise distinct;
\item[(c)] a \emph{singular string} of $\phi$ if $S$ is a string such that  $\phi^j(x_0)=\phi^k(x_0)$ for some $j\neq k$ in $\N$;
\item[(d)] a \emph{null string} of $\phi$ if  $S$ is a string such that $x_0\neq0$ and $\phi^k(x_0)=0$ for some $k\in\N_+$.
\end{itemize}
\end{deff}

In \cite{AADGH} a function was defined to measure the number of strings, and analogous functions were introduced in \cite{DGV} for non-singular strings and null strings. We now recall the definition of these functions, that in general we call string numbers.

\begin{deff}
Let $G$ be an abelian group and $\phi:G\to G$ an endomorphism. Then:
\begin{itemize}
\item[(b$'$)] $s(\phi)=\sup\left\{|\mathcal F|: \mathcal F\ \text{is a family of pairwise disjoint strings of $\phi$}\right\}$ is the \emph{string number} of $\phi$;
\item[(c$'$)] $ns(\phi)=\text{sup}\left\{|\mathcal{F}|:\text{$\mathcal{F}$ is a family of pairwise disjoint non-singular strings of $\phi$}\right\}$ is the \emph{non-sin\-gu\-lar string number} of $\phi$;
\item[(d$'$)] $s_0(\phi)=\sup\left\{|\mathcal F|: \mathcal F\ \text{is a family of pairwise disjoint null strings of $\phi$}\right\}$ is the \emph{null string number} of $\phi$.
\end{itemize}
\end{deff}

When the suprema in this definition are infinite, we set them equal to the symbol $\infty$. In the main theorems of \cite{DGV} it is proved that actually 
\begin{center}
\emph{the string numbers of endomorphisms of abelian groups have values in $\{0,\infty\}$}, 
\end{center}
and so we distinguish their values only between zero and infinity.

Note that the same surprising dichotomy for the values was proved in \cite{DGS} for the adjoint algebraic entropy (see Section \ref{ent-sec} for the definition of adjoint algebraic entropy).

\medskip
In analogy to what is done for the algebraic entropy in \cite{DGSZ} and for the adjoint algebraic entropy in \cite{DGS} and in \cite{GG} (see Section \ref{ent-sec}), it is possible to introduce the following ``global notions'' of string numbers of abelian groups, as noted in \cite[Definition 1.6]{DGV}.

\begin{deff}
Let $G$ be an abelian group.
\begin{itemize}
\item[(b$''$)] The \emph{string number} of $G$ is $s(G)=\sup\{s(\f):\f\in\End(G)\}$.
\item[(c$''$)] The \emph{non-singular string number} of $G$ is $ns(G)=\sup\{ns(\f):\f\in\End(G)\}$.
\item[(d$''$)] The \emph{null string number} of $G$ is $s_0(G)=\sup\{s_0(\f):\f\in\End(G)\}$.
\end{itemize}
\end{deff}

These functions take values in $\{0,\infty\}$. This follows immediately from the dichotomy of the values of the string numbers of endomorphisms of abelian groups, so again we distinguish the values of the string numbers of abelian groups only between zero and infinity. 

\medskip
We are interested in studying the abelian groups with one of the string numbers zero, and so we introduce the following classes: 
$$
\mathfrak S=\{G\ \text{abelian group}: s(G)=0\}, \ \ \  \Sns=\{G\ \text{abelian group}: ns(G)=0\}\ \text{and}$$ $$
\S0=\{G\ \text{abelian group}: s_0(G)=0\}.
$$
It is clear that $\mathfrak S\subseteq \Sns\cap \S0$, and we see in  Section \ref{examples} that actually $\mathfrak S= \Sns\cap \S0$. In this paper we study the following problem given in \cite[Problem 1.7]{DGV}.

\begin{problem}\label{pb}
Characterize the classes $\mathfrak S$, $\Sns$ and $\S0$.
\end{problem}

The counterpart of this problem for the algebraic entropy was studied in \cite{DGSZ}, and for the adjoint algebraic entropy in \cite{GG} and \cite{SZ}.

\medskip
In Section \ref{examples} we give many examples in which we calculate the values of the string numbers; we collect these examples in Table \ref{table1} below. Moreover, we find first basic properties of the string numbers. In particular, Proposition \ref{fg} and its Corollary \ref{free} solve completely Problem \ref{pb} for finitely generated abelian groups and for free abelian groups respectively. 

\medskip
Section \ref{s=0-tor} is dedicated to Problem \ref{pb} in the case of torsion abelian groups.
Theorem \ref{ThB} characterizes completely $\mathfrak S\cap \mathfrak T$ and $\mathfrak S_0\cap \mathfrak T$, where $\mathfrak T$ denotes the class of all torsion abelian groups. Indeed, $\mathfrak S\cap \mathfrak T=\mathfrak S_0\cap \mathfrak T$, and these two classes coincide with the class of all torsion abelian groups with all finite $p$-components.

With respect to Problem \ref{pb} for the non-singular string number, Corollary \ref{B-2} shows that for a torsion abelian group $G$ the condition 
\begin{equation}\label{cond}
\text{all the $p$-ranks of $G$ are finite}
\end{equation}
is a sufficient condition for $G$ to belong to $\Sns$. But this is not a necessary condition, as Example \ref{hopf-s0=infty} shows. So the following problem remains open.

\begin{problem}\label{Ques1}
Characterize the class $\Sns\cap \mathfrak T$.
\end{problem}

Nevertheless, we give some reductions and partial results, restricting this problem.
For example, in Theorem \ref{||<c} we see that, if $G$ belongs to $\Sns\cap \mathfrak T$, then $G$ has size less or equal than the cardinality of continuum $\cont$. 
Moreover, in the last part of Section \ref{s=0-tor}, we underline the difficulty of Problem \ref{Ques1}; indeed, we see that this problem is related to the classification of abelian groups with zero algebraic entropy, that was shown to be difficult in \cite{DGSZ}. 

\medskip
Section \ref{tf-sec} is dedicated to the torsion-free case of Problem \ref{pb}. In particular, Proposition \ref{tf-pomega} shows that $p^\omega G=0$ for every prime $p$ is a necessary condition for a torsion-free abelian group $G$ to belong to $\mathfrak S$. Moreover, this condition turns out to be also sufficient in case $G$ is endorigid (see Theorem \ref{endorigid}). Following \cite{EM}, we say that an abelian group $G$ is \emph{endorigid} if $\End(G)=\Z$; note that, since the ring $\Z$ does not contain idempotents, endorigid implies indecomposable.

On the other hand, Theorem \ref{tf--} shows that the problem of finding a complete classification of torsion-free abelian groups in $\mathfrak S$ and in $\mathfrak S_{ns}$ is a deep one, since for every infinite cardinal $\lambda$ it is possible to find both an indecomposable torsion-free abelian group $G$ of torsion-free rank $\lambda$ with $s(G)=ns(G)=0$, and an indecomposable torsion-free abelian group $H$ of torsion-free rank $\lambda$ with $s(H)=ns(H)=\infty$.

\medskip
In Section \ref{stab-sec} we consider the closure properties of the classes $\mathfrak S$, $\Sns$ and $\S0$. In particular we discuss in detail the closure of these classes under taking subgroups, quotients, direct summands,  extensions, direct sums and direct products. We show with various examples that the unique closure property possessed by the classes $\mathfrak S$, $\Sns$ and $\S0$, among the ones listed above, is that under taking direct summands. However, we prove that with additional hypotheses we can obtain closure properties in some particular cases. 

\medskip
A group is \emph{Hopfian} if each of its surjective endomorphisms is an automorphism. Equivalently, an Hopfian group is not isomorphic to any of its proper quotients. Easy examples of Hopfian groups are finite groups, finitely generated groups, torsion-free abelian groups of finite torsion-free rank and endorigid abelian groups. We discuss the connections between the string numbers and the Hopfian property in Section \ref{SHopf}, giving various examples. In particular, we show that $\S0$ (and so also $\mathfrak S$) is contained in the class of all Hopfian abelian groups, while $\Sns$ is not contained in that class. Moreover, Example \ref{hopf-s0=infty} exhibits a Hopfian abelian group not belonging to $\S0$, and this is a negative answer to \cite[Question 3.13]{DGV}.

\medskip
As we said above, the classes $\mathfrak S$, $\Sns$ and $\mathfrak S_0$ are not closed under taking subgroups. So we introduce three monotone functions generated by the three usual string numbers, namely, the \emph{hereditary string numbers}. Indeed, if $i(-)$ is any function defined on abelian groups with values in $\R_{\geq0}\cup\{\infty\}$ (e.g., $i(-)$ is one among $s(-)$, $ns(-)$, $s_0(-)$), then it is possible to define its ``hereditary modification'' setting, for every abelian group $G$,
$$\widetilde i(G)=\sup\{i(H):H\leq G\}.$$ 
Clearly, $\widetilde i(G)\geq i(G)$, and $\widetilde i(-)$ is monotone under taking subgroups. In general, we call any property of abelian groups  preserved under taking subgroups, a hereditary property.

\smallskip
We introduce the following classes also for the hereditary string numbers:
$$
\widetilde{\mathfrak S}=\{G\ \text{abelian group}: \widetilde s(G)=0\},\ \ \ \widetilde{\mathfrak S}_{ns}=\{G\ \text{abelian group}: \widetilde{ns}(G)=0\},\ \text{and}$$
$$\widetilde{\mathfrak S}_0=\{G\ \text{abelian group}: \widetilde{s_0}(G)=0\}.$$
By the definition it follows that $\widetilde{\mathfrak S}\subseteq \mathfrak S$, $\widetilde{\mathfrak S}_{ns}\subseteq \Sns$ and $\widetilde{\mathfrak S}_0\subseteq \S0$. Furthermore, we see that $\widetilde{\mathfrak S}= \widetilde{\mathfrak S}_{ns}\cap \widetilde{\mathfrak S}_0$.

In Section \ref{her-sec} we study the hereditary string numbers and consider the counterpart of Problem \ref{pb} for these smaller classes.
We solve first the torsion and the torsion-free case to come to a complete solution of the general problem in Theorem \ref{main-tilde}.


In particular, in the torsion case we find that $\widetilde{\mathfrak S}\cap\mathfrak T=\mathfrak S\cap \mathfrak T=\widetilde{\mathfrak S}_0\cap\mathfrak T=\mathfrak S_0\cap\mathfrak T$. Moreover, for torsion abelian groups $G$ the condition \eqref{cond},
which is sufficient but not necessary in order to have $ns(G)=0$, is instead equivalent to $\widetilde{ns}(G)=0$; in other words, $\widetilde{\mathfrak S}_{ns}\cap\mathfrak T$ coincides with the class of all torsion abelian groups with all finite $p$-ranks.
In the torsion-free case, denoting by $\mathfrak F$ the class of all torsion-free abelian groups, $\widetilde{\mathfrak S}_0\cap\mathfrak F$ is precisely the class of all torsion-free abelian groups with finite torsion-free rank; moreover, $\widetilde{\mathfrak S}\cap \mathfrak F=\widetilde{\mathfrak S}_{ns}\cap \mathfrak F$, and this class coincides with the class of all torsion-free abelian groups of torsion-free rank $\leq 1$ and with no infinity in their type. 
These partial results are used to prove Theorem \ref{main-tilde}, which describes completely the classes $\widetilde{\mathfrak S}$, $\widetilde{\mathfrak S}_{ns} $ and $\widetilde{\mathfrak S}_0$.


\smallskip
Finally, we find another characterization for the groups in $\widetilde{\mathfrak S}_0$ in terms of the Hopfian property. Indeed, we say that a group is \emph{hereditarily Hopfian} if each of its subgroups is Hopfian. It turns out that an abelian group $G$ is hereditarily Hopfian if and only if $\widetilde{s_0}(G)=0$, and this occurs precisely when all the $p$-components and the torsion-free rank of $G$ are finite (see Corollary \ref{her-H}). This result can be viewed also as a characterization of hereditary Hopfian abelian groups, which are studied also in \cite{GG2}.

%

\subsection{String numbers and algebraic entropies}\label{ent-sec}

We start this section giving the definition of algebraic entropy as suggested in \cite{AKM}, later studied in \cite{W} and recently deeply investigated in \cite{DGSZ}.
Let $G$ be an abelian group and $F$ a finite subgroup of $G$; for an endomorphism $\f:G\to G$ and a positive integer $n$, let $T_n(\f,F)=F+\f(F)+\ldots+\f^{n-1}(F)$ be the \emph{$n$-th $\f$-trajectory} of $F$ and $T(\f,F)=\sum_{n\in\N}\f^n(F)$ the \emph{$\f$-trajectory} of $F$. The \emph{algebraic entropy of $\f$ with respect to $F$} is 
$$H(\f,F)={\lim_{n\to \infty}\frac{\log|T_n(\f,F)|}{n}},$$
and the \emph{algebraic entropy} of $\f$ is 
$$\ent(\f)=\sup\{H(\f,F): F\ \text{is a finite subgroup of } G\}.$$ The \emph{algebraic entropy} of $G$ is $$\ent(G)=\sup \{\ent(\f): \f\in\End(G)\}.$$

\medskip
The adjoint algebraic entropy was defined in \cite{DGS} substituting in the definition of the algebraic entropy the family of all finite subgroups with the family of all finite-index subgroups. We give the precise definition: if $N$ is a finite-index subgroup of an abelian group $G$, $\f:G\to G$ an endomorphism and $n$ a positive integer, the \emph{$n$-th $\f$-cotrajectory} of $N$ is $C_n(\f,N) = \frac{G}{N\cap\f^{-1}(N)\cap\ldots\cap\f^{-n+1}(N)}$.
The \emph{adjoint algebraic entropy of $\f$ with respect to $N$} is 
$$H^\star(\f,N)=\lim_{n\to\infty}\frac{\log|C_n(\f,N)|}{n},$$
and the \emph{adjoint algebraic entropy} of $\f$ is $$\ent^\star(\f)=\sup\{H^\star(\f,N): N\leq G,\ G/N\ \text{finite}\}.$$ The \emph{adjoint algebraic entropy} of $G$ is $$\ent^\star(G)=\sup \{\ent^\star(\f): \f\in\End(G)\}.$$

\medskip
Even if the algebraic entropy of endomorphisms of abelian groups takes all possible values in $\{\log n:n\in\N_+\}\cup\{\infty\}$, in \cite{DGSZ} it is proved that an abelian group $G$ can have algebraic entropy only $0$ or $\infty$. On the other hand, for the adjoint algebraic entropy, this dichotomy for the values holds already at the level of endomorphisms (see \cite{DGS}). 

\medskip
We now give the definition of the three classical Bernoulli shifts. Let $K$ be an abelian group, and for a cardinal (or a set) $\lambda$, we denote by $K^{(\lambda)}$ the direct sum of $\lambda$ copies of $K$.
\begin{itemize}
\item[(a)]  The \emph{two-sided Bernoulli shift} $\overline{\beta}_K$ of the group $K^{(\mathbb Z)}$ is  defined by 
$$ \overline\beta_K((x_n)_{n\in\mathbb Z})=(x_{n-1})_{n\in\mathbb Z}, \mbox{ for every } (x_n)_{n\in\mathbb Z}\in K^{(\mathbb Z)}.$$
\item[(b)] The \emph{right  Bernoulli shift} $\beta_K$ and the \emph{left Bernoulli shift} $_K\beta$ of the group $K^{(\mathbb N)}$ are defined, for every $(x_n)_{n\in\N}\in K^{(\N)}$, respectively by 
$$\beta_K(x_0,x_1,x_2,\ldots)=(0,x_0,x_2,\ldots) \ \mbox{and}\ {}_K\beta(x_0,x_1,x_2,\ldots)=(x_1,x_2,x_3,\ldots).$$ 
\end{itemize}

The left Bernoulli shift and the two-sided Bernoulli shift are relevant examples for both ergodic theory and topological dynamics. The right Bernoulli shift is fundamental for the theory of algebraic entropy (e.g., one of the properties giving uniqueness of the algebraic entropy in $\mathfrak T$ is based on it --- see \cite{DGSZ}). Furthermore, for any abelian group $G$ and any endomorphism $\phi:G\to G$, $\ent(\f)>0$ if and only if for some prime $p$ there exists a subgroup $H$ of $G$, contained in the $p$-socle of $G$, on which $\f$ acts as the right Bernoulli shift (see \cite{DGSZ}). The Bernoulli shifts were fundamental also in proving the dichotomy of the values of the adjoint algebraic entropy (see \cite{DGS} and \cite{GB}).

\smallskip
As noted in \cite{DGV}, the string numbers of the Bernoulli shifts are quite different from the algebraic entropy of the Bernoulli shifts; the values of the string numbers of the Bernoulli shifts are calculated in \cite[Example 3.26]{DGV}. For the values of the algebraic entropy we refer to \cite{DGSZ}, and for the values of the adjoint algebraic entropy to \cite{DGS}. We collect this information in the following table, where $K$ is a non-zero abelian group and we adopt the usual convention that $\log|K|=\infty$ if $K$ is infinite.

\begin{center}
\begin{tabular}{|c|ccc|cc|}
\hline
 & $s(-)$ & $ns(-)$ & $s_0(-)$ & $\ent(-)$ & $\ent^\star(-)$ \\
 \hline
$\beta_K$ & $0$  & $0$ & $0$ & $\log|K|$ & $\infty$ \\
${}_K\beta$ & $\infty$ & $0$ & $\infty$ & $0$ & $\infty$ \\
$\overline\beta_K$ & $\infty$ & $\infty$ & $0$ & $\log|K|$ & $\infty$ \\
\hline
\end{tabular}
\captionof{table}{Bernoulli shifts}\label{bernoulli}
\end{center}

In the context of the string numbers, the Bernoulli shifts are useful since, whenever we have an infinite direct sum of copies of an abelian group $K$, the two sided Bernoulli shift $\overline\beta_K$ is an example of an endomorphism with non-singular strings, whereas the left Bernoulli shift ${}_K\beta$ admits null strings, as shown in Table \ref{bernoulli}. This gives the values of the last line of Table \ref{table1}.

\medskip
In the following table we compare the values of the string numbers, the algebraic entropy and the adjoint algebraic entropy of particular abelian groups. Here $p$ is a prime, $K$ is a non-zero abelian group, and $B_p$ denotes the standard basic subgroup, that is, $B_p=\bigoplus_{n\in\N_+}\Z(p^n)$. The values of the string numbers are calculated in Section \ref{examples}, while for the values of the algebraic entropy and of the adjoint algebraic entropy we refer to \cite{DGSZ} and \cite{DGS} respectively.

\begin{center}
\begin{tabular}{|c|ccc|cc|}
\hline
 & $s(-)$ & $ns(-)$ & $s_0(-)$ & $\ent(-)$ & $\ent^\star(-)$ \\
 \hline
$\Z$ & $0$ & $0$ & $0$ & $0$ & $0$ \\
$\Z^2$ & $\infty$ & $\infty$ & $0$ & $0$ & $0$ \\
$\Q$ & $\infty$ & $\infty$ & $0$ & $0$ & $0$ \\
$\mathbb J_p$ & $\infty$ & $\infty$ & $0$ & $0$ & $0$ \\
$\Z(p^\infty)$ & $\infty$ & $0$ & $\infty$ & $0$  & $0$ \\
$\Q/\Z$ & $\infty$ & $0$ & $\infty$ & $0$ & $0$ \\
$B_p$ & $\infty$ & $\infty$ & $\infty$ & $\infty$ & $\infty$ \\
$K^{(\N)}$ & $\infty$ & $\infty$ & $\infty$ & $\infty$ & $\infty$\\
\hline
\end{tabular}
\captionof{table}{String numbers and entropies}\label{table1}
\end{center}

The next are properties typical of the entropy functions, that hold also for the string numbers. 

\begin{fact}\label{ent*}
\begin{itemize}
\item[(a)] (Conjugation under isomorphism) \cite[Lemma 2.8]{DGV} Let $G$ and $H$ be two abelian groups, and $\xi:G\to G$ an isomorphism. Let $\phi:G\to G$ and $\psi:H\to H$ be conjugated under the action of $\xi$, i.e. $\phi=\xi^{-1} \psi\xi$. Then $s(\phi)=s(\psi)$, $ns(\phi)=ns(\psi)$ and $s_0(\phi)=s_0(\psi)$.  
\item[(b)] (Logarithmic law) \cite[Corollary 4.6]{DGV} Let $G$ be an abelian group and $\f\in\End(G)$. Then $s(\f^k)=k\cdot s(\f)$, $ns(\f^k)=k\cdot ns(\f)$ and $s_0(\f^k)=k\cdot s_0(\f)$ for every $k\in\N_+$.
\item[(c)] (Monotonicity under taking invariant subgroups) \cite[Lemma 2.9]{DGV} Let $G$ be an abelian group, $\phi:G\to G$ an endomorphism and $H$ a $\phi$-invariant subgroup of $G$. Then $s(\phi)\geq s(\phi\restriction_H)$, $ns(\phi)\geq ns(\phi\restriction_H)$ and  $s_0(\phi)\geq s_0(\phi\restriction_H)$.
\item[(d)] (Monotonicity under taking quotients) \cite[Theorem 4.9]{DGV} Let $G$ be an abelian group, $\phi:G\to G$ an endomorphism, $H$ a $\f$-invariant subgroup of $G$ and $\overline\f:G/H\to G/H$ the endomorphism induced by $\f$. Then $s(\f)\geq s(\overline\f)$ and $ns(\f)\geq ns(\overline\f)$. The null string number does not possess monotonicity under taking induced endomorphisms on quotients (see \cite[Example 3.24]{DGV}). 
\item[(e)] (Additivity on direct sums) \cite[Lemma 2.11]{DGV} Let $G$ be an abelian group, $\phi:G\to G$ an endomorphism and $H_1, H_2$ $\phi$-invariant subgroups of $G$ such that $G=H_1\oplus H_2$. Let $\phi_1=\phi\restriction_{H_1}$ and $\phi_2=\phi\restriction_{H_2}$. Then:
\begin{itemize}
\item[(i)] $s(\phi)=0$ if and only if $s(\phi_1)=s(\phi_2)=0$;
\item[(ii)] $ns(\phi)=0$ if and only if $ns(\phi_1)=ns(\phi_2)=0$;
\item[(iii)] $s_0(\phi)=0$ if and only if $s_0(\phi_1)=s_0(\phi_2)=0$.
\end{itemize}
\end{itemize}
\end{fact}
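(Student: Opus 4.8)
The five assertions are all instances of a single principle: a group homomorphism that intertwines two endomorphisms transports (pseudo)strings to (pseudo)strings, and one only has to check which of the three refinements (plain, non-singular, null) survives the transport. The plan is to verify each transport at the level of a single string and then to pass to families of pairwise disjoint strings, using that all three suprema are monotone in the obvious way and that, by the dichotomy stated above, each number is $0$ precisely when no string of the relevant kind exists.

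For part (a) I would observe that since $\xi$ is an isomorphism, $S=\{x_n\}$ is a pseudostring of $\phi$ if and only if $\{\xi(x_n)\}$ is a pseudostring of $\psi$, because $\xi\phi=\psi\xi$; injectivity of $\xi$ preserves distinctness, and the conditions $\phi^j(x_0)=\phi^k(x_0)$ and $\phi^k(x_0)=0$ translate verbatim through $\xi$ (using $\xi(0)=0$). Hence $\xi$ induces a bijection between families of pairwise disjoint strings (resp. non-singular, null strings) of $\phi$ and of $\psi$, giving equality of all three numbers. Part (c) is even simpler: since $H$ is $\phi$-invariant, every string of $\phi\restriction_H$ is literally a string of $\phi$ with the same singularity/nullity status, so any disjoint family for $\phi\restriction_H$ is a disjoint family for $\phi$, whence the inequalities. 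Part (e) follows by writing each $x_n=x_n^1+x_n^2$ along $G=H_1\oplus H_2$: a string of $\phi$ projects to pseudostrings of $\phi_1$ and $\phi_2$, and a short case analysis on which coordinate carries infinitely many distinct values yields the three equivalences.

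The substantive parts are (b) and (d). For the logarithmic law (b), given the dichotomy it suffices to prove $i(\f^k)=0\iff i(\f)=0$ for $i\in\{s,ns,s_0\}$. One direction is by \emph{thinning}: from a string $\{x_n\}$ of $\f$ the $k$ subsequences $\{x_{r+nk}\}_{n\in\N}$ for $0\le r<k$ are pairwise disjoint strings of $\f^k$, so $i(\f)=\infty$ forces $i(\f^k)=\infty$. The reverse direction is by \emph{interpolation}: from a string $\{y_n\}$ of $\f^k$ one builds a $\f$-pseudostring by inserting $\f^{k-j}(y_{n+1})$ between $y_n$ and $y_{n+1}$; the point to be handled is that this pseudostring need not have distinct elements, and one must argue that a repetition would force a cycle, i.e. a singular configuration, hence cannot occur for the non-singular and null refinements (and can be excised for the plain one). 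This is exactly where the three variants must be treated separately.

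The main obstacle is part (d), monotonicity under quotients. Here the difficulty is the reverse of transport: a string $\{\bar x_n\}$ of the induced map $\overline\f$ on $G/H$ must be \emph{lifted} to a genuine string of $\f$ on $G$, and the naive choice of preimages only yields $\f(x_n)\equiv x_{n-1}\pmod H$, not equality. The plan is to build the lift recursively, at each stage correcting the chosen preimage by an element of $H$ so that the pseudostring relation holds exactly, and then to check that distinctness survives because the images $\bar x_n$ are already distinct. This lifting works for $s$ and $ns$ but \emph{fails} for $s_0$: a null string of $\overline\f$ only records $\overline\f{}^k(\bar x_0)=0$ in $G/H$, which means $\f^k(x_0)\in H$, not $\f^k(x_0)=0$, so nullity is not preserved --- and indeed the quoted \cite[Example 3.24]{DGV} shows the inequality genuinely fails for $s_0$. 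I expect the careful bookkeeping in this recursive lift, together with the verification that the non-singular status is preserved, to be the hardest and most error-prone step.
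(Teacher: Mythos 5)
The paper does not actually prove this Fact: all five items are imported verbatim from \cite{DGV} (Lemma 2.8, Corollary 4.6, Lemma 2.9, Theorem 4.9, Lemma 2.11), so there is no internal proof to compare against and your attempt must stand on its own. Parts (a) and (c) are correct as you give them. Parts (e) and (b) are essentially right but both lean on the same unstated ingredient: a projected (resp.\ interpolated) pseudostring can have repeated entries. For the non-singular and null variants this is harmless, since a repetition $z_m=z_{m'}$ ($m<m'$) in a pseudostring forces $\f^{m'-m}(z_0)=z_0$, i.e.\ $z_0$ periodic, which is incompatible with $z_0$ having infinite orbit or with $z_0\neq 0$, $\f^k(z_0)=0$; but for the plain string number your ``a repetition can be excised'' is not an argument. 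The standard repair is either to invoke that an infinite pseudostring contains a string (this is \cite[Lemma 2.10]{DGV}, which the paper itself uses in the proof of Proposition \ref{fg}), or to reduce the plain case to the other two via $s(\f)=ns(\f)+s_0(\f)$.

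Part (d) has a genuine gap. Your recursive lift requires, at each step, an element $c_{n+1}\in H$ with $\f(c_{n+1})=h_{n+1}+c_n$, where $h_{n+1}\in H$ is the discrepancy of the chosen preimages; since $c_{n+1}$ must lie in $H$, this is solvable only when $h_{n+1}+c_n\in\f(H)$, and $H$ is merely $\f$-invariant ($\f(H)\subseteq H$), not mapped onto itself. Equivalently, the lifts of $\{\bar x_n\}$ form the inverse limit of the cosets $\pi^{-1}(\bar x_n)$ under $\f$ (where $\pi:G\to G/H$ is the projection), an inverse limit of non-empty sets which can be empty. Concretely: let $G=\Z\oplus \Z^{(\N)}$, $H=\Z\oplus 0$, and $\f(h,x)=\bigl(3h+\sigma(x),\,{}_\Z\beta(x)\bigr)$, where $\sigma(x)=\sum_i x_i$ and ${}_\Z\beta$ is the left Bernoulli shift. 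Then $H$ is $\f$-invariant and $\overline\f$ is ${}_\Z\beta$, which has the null string $\{\bar e_n\}_{n\in\N}$; but any lift must have the form $(h_n,e_n)$ with $h_n=3h_{n+1}+1$ for all $n$, whence $2h_0+1=3^k(2h_k+1)$ for every $k$, impossible in $\Z$. So this string of $\overline\f$ admits no pseudostring lift whatsoever. Note that $\f$ does have strings, e.g.\ $\{(0,e_n-e_{n+1})\}_{n\in\N}$, so the statement is safe --- it is your method that fails: a correct proof must do something more than lift an arbitrary given string (for instance, select a string downstairs that does lift, or argue globally), and this extra difficulty is precisely why quotient monotonicity is a full Theorem in \cite{DGV} while monotonicity for invariant subgroups is an elementary Lemma.
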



\subsection*{Notation and terminology}

We denote by $\mathbb Z$, $\mathbb N$, $\mathbb N_+$, $\Q$ and $\R$ respectively the set of integers, the set of natural numbers, the set of positive integers, the set of rationals and the set of reals. For $m\in\mathbb N_+$, we use $\mathbb Z(m)$ for the finite cyclic group of order $m$. For a prime $p$ the symbol $\mathbb J_p$ is used for the group of $p$-adic integers and $\Z(p^\infty)$ for the Pr\"ufer group. Moreover, the symbol $\cont$ stands for the cardinality of the continuum.

Let $G$ be an abelian group. 
For a set $I$ we denote by $G^I$ the direct product $\prod_{i\in I}G$ and by $G^{(I)}$ the direct sum $\bigoplus_{i\in I} G$. For $x=(x_i)_{i\in I}\in G^{I}$ the support of $x$ is $\supp(x)=\{i\in I:x_i\neq0\}$. The subgroup of torsion elements of $G$ is $t(G)$, while for a prime $p$ we denote by $t_p(G)$ the $p$-component of $G$. Moreover, $D(G)$ denotes the divisible hull of $G$ and $d(G)$ the maximum divisible subgroup of $G$. We denote by $r_0(G)$ the torsion-free rank of $G$ and, for a prime $p$, $r_p(G)$ denotes the $p$-rank of $G$, that is, $\dim_{\mathbb F_p}(G[p])$, where $G[p]$ is the $p$-socle of $G$. More in general, for $k\in\N_+$, $G[k]=\{x\in G:k x=0\}$. 
Moreover, $\End(G)$ is the ring of all endomorphisms of $G$. 
For $\phi\in\End(G)$, we say that a subgroup $H$ of $G$ is $\phi$-invariant if $\phi(H)\subseteq H$.
For $k\in\Z$ we denote by $\mu_k:G\to G$ the multiplication by $k$, that is, $\mu_k(x)=k\cdot x$ for every $x\in G$.

For undefined terms see \cite{F}.

\subsection*{Aknowledgements}

We thank Professor Dikranjan for his very useful comments and suggestions, and also Professor Salce and Professor G\"obel for their suggestions about already existing examples and results, which helped us in completing this paper. We are also grateful to Professor Goldsmith for giving us preprints of \cite{GG} and \cite{GG2}, and to the referee, who gave us the possibility to improve the exposition of our results with many useful observations.

\section{First results and examples}\label{examples}

In this section we calculate the string numbers of many examples of abelian groups and state first results that have interest on their own and that will be applied in the following sections.

\medskip
For an endomorphism $\f$ of an abelian group $G$, $s(\f)=ns(\f)+s_0(\f)$ (see \cite[Equation 1.1]{DGV}); this formula gives immediately its counterpart for the string numbers of $G$, that is,
\begin{equation}\label{s=ns+s0}
s(G)=ns(G)+s_0(G);
\end{equation}
this is equivalent to $\mathfrak S=\Sns \cap \S0$.

\smallskip
Since a string is an infinite set, $s(F)=0$ for every finite abelian group $F$. So a first observation about the class $\mathfrak S$ is that it contains all finite abelian groups. 

The next examples follow respectively from \cite[Examples 3.16, 3.17, 3.18]{DGV} and from the values of the string numbers of the Bernoulli shifts collected in Table \ref{bernoulli}.

\begin{example}\label{Z}
\begin{itemize}
\item[(a)] $s(\Z)=0$.
\item[(b)] $s(\Z^2)=ns(\Z^2)=\infty$. 
\item[(c)] $s(\Q)=ns(\Q)=\infty$ and $s_0(\Q)=0$.
\item[(d)] If $G$ is a non-trivial abelian group, then $s(G^{(\N)})=ns(G^{(\N)})=s_0(G^{(\N)})=\infty$.
\end{itemize}
\end{example}


Given an abelian group $G$ and an endomorphism $\phi$ of $G$, let
$\F_{\phi}$ be the family of all the subgroups $H$ of $G$ such that
$\phi(H)=H$ and consider the partial order by inclusion on
$\F_{\phi}$. Let $\sc \phi=\sum_{H\in \F_{\phi}}H $. Since it is not
difficult to prove that $\sc \phi$ is an element of $ \F_{\phi}$, $\sc
\phi$ is a maximum for the poset $(\F_{\phi},\subseteq)$. In
particular, $\sc \phi$ is a $\phi$-invariant subgroup of $G$ such that
the endomorphism induced by $\phi$ on
 $\sc \phi$ is surjective. Furthermore,
$\sc \phi$ contains all the subgroups of $G$ with these properties. We
call  $\sc \phi$ the {\em surjective core} of $\phi$ (see \cite{DGV}
for an alternative description).

Consider now a pseudostring $S$ of $\phi$. Then $T(\phi, \langle
S\rangle)=\sum_{n\in\N}\phi^n\langle S\rangle$ is an element of
$\F_{\phi}$. This shows that every pseudo-string of $\phi$ is contained in
$\sc \phi$. In particular we obtain the following lemma, that permits to consider surjective endomorphisms in the computation of the string numbers of endomorphisms of abelian groups.

\begin{lemma}\label{s-c}
Let $G$ be an abelian group and $\phi\in\End(G)$. Then $s(\phi)=s(\phi\restriction_{\sc\phi})$, $ns(\phi)=ns(\phi\restriction_{\sc\phi})$ and $s_0(\phi)=s_0(\phi\restriction_{\sc\phi})$.
\end{lemma}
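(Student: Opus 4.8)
The plan is to show that all three string numbers of $\phi$ are computed entirely within the surjective core $\sc\phi$. The key conceptual observation, already established in the text preceding the statement, is that \emph{every pseudostring of $\phi$ is contained in $\sc\phi$}. Indeed, given a pseudostring $S=\{x_n\}_{n\in\N}$, the trajectory $T(\phi,\langle S\rangle)=\sum_{n\in\N}\phi^n\langle S\rangle$ lies in $\F_\phi$ (it is $\phi$-invariant and $\phi$ restricts to a surjection on it, since each $x_{n-1}=\phi(x_n)$ is hit), and $\sc\phi$ is the maximum of $\F_\phi$, so $\langle S\rangle\subseteq T(\phi,\langle S\rangle)\subseteq\sc\phi$; in particular every term $x_n$ lies in $\sc\phi$.

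With this in hand I would prove each of the three equalities by a two-sided inequality. For the ``$\geq$'' direction, since $\sc\phi$ is a $\phi$-invariant subgroup, monotonicity under taking invariant subgroups (Fact~\ref{ent*}(c)) immediately gives $s(\phi)\geq s(\phi\restriction_{\sc\phi})$, and likewise for $ns$ and $s_0$. For the ``$\leq$'' direction I would argue that any family $\F$ realizing (or approximating) the supremum defining $s(\phi)$ consists of strings of $\phi$, and by the observation above every such string lies entirely in $\sc\phi$; moreover $\phi$ and $\phi\restriction_{\sc\phi}$ agree on $\sc\phi$, so each of these strings is a string of $\phi\restriction_{\sc\phi}$ as well, and pairwise disjointness is preserved. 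Hence $\F$ is admissible in the supremum computing $s(\phi\restriction_{\sc\phi})$, yielding $s(\phi)\leq s(\phi\restriction_{\sc\phi})$.

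The only point requiring care is that the defining properties of the three \emph{types} of string are intrinsic and transfer correctly between $\phi$ and its restriction. I would check that being a string (pairwise distinct elements), being singular ($\phi^j(x_0)=\phi^k(x_0)$ for some $j\neq k$), and being null ($x_0\neq0$ and $\phi^k(x_0)=0$) are all conditions involving only the values of $\phi$ on the terms $\phi^i(x_0)$, all of which lie in $\sc\phi$; since $\phi$ and $\phi\restriction_{\sc\phi}$ coincide there, a string is singular (resp.\ null) for $\phi$ exactly when it is singular (resp.\ null) for $\phi\restriction_{\sc\phi}$. Consequently the families of non-singular strings and of null strings match up precisely, and the supremum over each restricted family equals the supremum over the full one. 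I expect this type-compatibility bookkeeping to be the main (though routine) obstacle; once it is in place the three equalities follow uniformly from the containment in $\sc\phi$ together with Fact~\ref{ent*}(c).
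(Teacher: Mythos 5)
Your proposal is correct and follows essentially the same route as the paper: the paper derives the lemma precisely from the observation that $T(\phi,\langle S\rangle)\in\F_\phi$ forces every pseudostring of $\phi$ to lie in $\sc\phi$, leaving the remaining bookkeeping implicit. Your added details (the ``$\geq$'' direction via Fact~\ref{ent*}(c) and the check that singularity and nullity are intrinsic to $\sc\phi$) are exactly the routine verifications the paper omits.
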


Let $G$ be an abelian group and $\phi\in\End(G)$. Recall that an element $x\in G$ is a \emph{periodic} point of $\phi$ if there exists $n\in\N_+$ such that $\phi^{n}(x)=x$; we denote by $\Per(\phi)$ the set of all periodic points of $\phi$. Moreover, an element $x\in G$ is a \emph{quasi-periodic} point of $\phi$ if there exist $n>m$ in $\N$ such that $\phi^{n}(x)=\phi^{m}(x)$; we denote by $Q\Per(\phi)$ the set of all quasi-periodic points of $\phi$. 

\smallskip
The next lemma, whose proof is a straightforward application of the definitions, gives equivalent conditions for a string to be singular.

\begin{lemma}\label{sing}
Let $G$ be an abelian group, $\phi\in\End(G)$ and $S=\{x_n\}_{n\in\N}$ a string of $\phi$. The following conditions are equivalent:
\begin{itemize}
\item[(a)] $S$ is singular;
\item[(b)] $x_0$ is quasi-periodic (i.e., $\{\phi^n(x_0):n\in\N\}$ is finite);
\item[(c)] $x_n$ is quasi-periodic for every $n\in\N$.
\end{itemize}
\end{lemma}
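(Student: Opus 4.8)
The plan is to prove the equivalence of (a), (b), and (c) in Lemma \ref{sing} by unwinding the definitions and using the pseudostring relation $\phi(x_n)=x_{n-1}$ to transfer properties between the different terms of the string. First I would recall that $S=\{x_n\}_{n\in\N}$ being a string means it is a pseudostring with pairwise distinct elements, so in particular $\phi^n(x_n)=x_0$ for every $n\in\N$, and more generally $\phi^k(x_n)=x_{n-k}$ whenever $k\leq n$.

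For the equivalence of (a) and (b), I would argue as follows. By definition, $S$ is singular precisely when $\phi^j(x_0)=\phi^k(x_0)$ for some $j\neq k$ in $\N$, which is exactly the statement that $x_0$ is quasi-periodic. The parenthetical characterization that this is equivalent to $\{\phi^n(x_0):n\in\N\}$ being finite is the standard observation that a forward orbit in any set is finite if and only if it is eventually periodic, i.e. if and only if two of its iterates coincide; I would note the easy direction (finiteness forces a collision by pigeonhole) and the reverse direction (a collision $\phi^j(x_0)=\phi^k(x_0)$ with $j<k$ forces the orbit to cycle after step $j$, hence to be finite).

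The main content is the equivalence of (b) and (c), and the key step will be showing that quasi-periodicity of one term propagates to all terms. The implication (c)$\Rightarrow$(b) is trivial (take $n=0$). For (b)$\Rightarrow$(c), suppose $x_0$ is quasi-periodic, say $\phi^j(x_0)=\phi^k(x_0)$ with $j<k$; I would fix an arbitrary $n\in\N$ and use the relation $x_0=\phi^n(x_n)$ to substitute, obtaining $\phi^{j+n}(x_n)=\phi^{k+n}(x_n)$, which witnesses that $x_n$ is quasi-periodic as well. Conversely, if some $x_n$ is quasi-periodic, then applying $\phi^n$ to a collision among its iterates (and using $\phi^n(x_n)=x_0$) shows $x_0$ is quasi-periodic, so quasi-periodicity of any single term already propagates to all of them.

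I do not expect any genuine obstacle here, since, as the paper itself remarks, the proof is a straightforward application of the definitions; the only point requiring a little care is keeping the indices straight when shifting a collision relation up and down the string via $\phi(x_n)=x_{n-1}$, and making sure the finiteness reformulation in (b) is handled by the eventual-periodicity argument rather than treated as definitional.
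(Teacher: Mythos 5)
Your proof is correct and is exactly the ``straightforward application of the definitions'' that the paper invokes (the paper gives no explicit proof of this lemma): (a)$\Leftrightarrow$(b) is definitional, the finiteness reformulation follows by pigeonhole and eventual cycling, and (b)$\Leftrightarrow$(c) follows by shifting collision relations along the string via $\phi^n(x_n)=x_0$. Nothing is missing.
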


It is now possible to prove the following result, which characterizes the abelian groups endomorphisms with one of the string numbers zero in terms of their surjective core. 

\begin{proposition}\label{AAA}
Let $G$ be an abelian group and $\phi\in\End(G)$. Then:
\begin{itemize}
\item[(a)] $s(\phi)=0$ if and only if $\Per(\phi\restriction_{\sc\phi})=\sc\phi$;
\item[(b)] $ns(\phi)=0$ if and only if $Q\Per(\phi\restriction_{\sc\phi})=\sc\phi$;
\item[(c)] $s_0(\phi)=0$ if and only if $\ker\phi\restriction_{\sc\phi}=\ker\phi\cap\sc\phi=0$.
\end{itemize}
\end{proposition}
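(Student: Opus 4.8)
The plan is to pass to the surjective core and reduce all three equivalences to statements about a single \emph{surjective} endomorphism. By Lemma \ref{s-c} we have $s(\phi)=s(\phi\restriction_{\sc\phi})$, $ns(\phi)=ns(\phi\restriction_{\sc\phi})$ and $s_0(\phi)=s_0(\phi\restriction_{\sc\phi})$, so it suffices to work with $K:=\sc\phi$ and the surjective endomorphism $\psi:=\phi\restriction_K$ (recall that $\psi(K)=K$ by the definition of the surjective core). Moreover, each string number is a supremum of cardinalities of families of pairwise disjoint strings of the relevant kind, and a single such string already forms a family of size $1$; hence each of $s(\psi)$, $ns(\psi)$, $s_0(\psi)$ vanishes if and only if $\psi$ admits \emph{no} string at all (respectively no non-singular string, no null string). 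Finally, $\ker(\phi\restriction_{\sc\phi})=\ker\phi\cap\sc\phi$ holds trivially, so in (c) the stated condition is just $\ker\psi=0$.

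The heart of the matter is one construction together with a distinctness criterion. Since $\psi$ is surjective, from any $x\in K$ one can recursively choose $x_0=x$ and $x_n$ with $\psi(x_n)=x_{n-1}$, producing a pseudostring $\{x_n\}_{n\in\N}$. I claim such a pseudostring has a repeated term only if $x_0$ is periodic: if $x_i=x_j$ with $i<j$, applying $\psi^i$ gives $x_0=x_{j-i}$, whence $\psi^{j-i}(x_0)=\psi^{j-i}(x_{j-i})=x_0$, so $x_0\in\Per(\psi)$. Therefore, if $x_0\notin\Per(\psi)$ this pseudostring is a string, and if moreover $x_0\notin Q\Per(\psi)$ then it is a non-singular string by Lemma \ref{sing}. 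This yields at once the ``string exists'' directions: if $\Per(\psi)\neq K$, choosing $x_0\notin\Per(\psi)$ gives a string, so $s(\psi)\neq0$; if $Q\Per(\psi)\neq K$, choosing $x_0\notin Q\Per(\psi)$ gives a non-singular string, so $ns(\psi)\neq0$. For (c), if $\ker\psi\neq0$ I run the same construction from a nonzero $x_0=z\in\ker\psi$: here $x_0\neq0$ and $\psi(x_0)=0$, and a repetition would force $x_0$ periodic as above, so $x_0=\psi^{j-i}(x_0)=0$, a contradiction; thus $\{x_n\}$ is a null string and $s_0(\psi)\neq0$.

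For the converse directions I would argue as follows. In (a), assume $\Per(\psi)=K$. Then every $w\in\ker\psi$ is periodic, say $\psi^p(w)=w$ with $p\geq1$, and $\psi(w)=0$ forces $w=\psi^p(w)=0$; hence $\psi$ is injective, so (being also surjective) an automorphism of $K$. Every pseudostring from $x_0$ is then forced to be $\{\psi^{-n}(x_0)\}$, which is finite because $x_0$ is periodic, so no string can exist and $s(\psi)=0$. In (b), if $Q\Per(\psi)=K$ then every string has $x_0$ quasi-periodic, hence is singular by Lemma \ref{sing}, so there is no non-singular string and $ns(\psi)=0$. In (c), if $\ker\psi=0$ then $\psi$ is injective, so $\psi^k(x_0)=0$ forces $x_0=0$ and no string can be null, giving $s_0(\psi)=0$.

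I expect the only genuinely delicate point to be the converse in (a): recognizing that, for a surjective $\psi$, the condition $\Per(\psi)=K$ is equivalent to $\psi$ being an automorphism whose orbits are all finite. The observation that $\psi$ is injective on periodic points is exactly what rules out infinite backward orbits. Everything else is bookkeeping around the backward-orbit construction and the repetition-implies-periodicity computation, with Lemma \ref{sing} supplying the singular/non-singular translation needed in (b).
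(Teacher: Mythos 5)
Your proposal is correct and follows essentially the same route as the paper: reduce to the surjective core via Lemma \ref{s-c}, build a backward-orbit pseudostring from surjectivity, use the repetition-implies-periodicity computation for the ``string exists'' directions, and invoke Lemma \ref{sing} and injectivity for (b) and (c). The only cosmetic difference is in the converse of (a), where you first deduce that $\psi$ is an automorphism and then note that backward orbits of periodic points are finite, whereas the paper shows directly that every pseudostring consists of periodic points and is therefore contained in the finite orbit of its initial term --- both are the same finiteness argument.
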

\begin{proof}
By Lemma \ref{s-c} we can suppose without loss of generality $\phi$ to be  surjective, that is, $\sc\phi=G$. For a fixed (but arbitrary) $x_0\in G\setminus \{0\}$, by the surjectivity of $\phi$, there exists a pseudostring $S_0=\{x_n\}_{n\in\N}$ of $\phi$. 

\smallskip
(a) Assume that $G=\Per(\phi)$. If $S=\{y_n\}_{n\in\N}$ is an arbitrary pseudostring of $\phi$, then $y_n$ is periodic for every $n\in\N$, so $O=\{\phi^k(y_0):k\in\N\}$ is finite and $y_n\in O$ for all $n\in\N$. This shows that $S\subseteq O$ is finite and so it cannot be a string. Hence, $s(\phi)=0$. To prove the converse implication, suppose that $G\neq\Per(\phi)$ and let $x_0\in G\setminus \Per(\phi)$. We show that $S_0$ is a string and so $s(\phi)=\infty$. In fact, if $x_n=x_m$ for some $n\geq m\in\N$, then $\phi^{n-m}(x_0)=\phi^m(x_n)=\phi^m(x_m)=x_0$. Since $x_0$ is not periodic, this implies $m=n$. Hence the elements of $S_0$ are pairwise distinct and so $S$ is a string.

\smallskip
(b) Suppose that $G=Q\Per(\phi)$. If $S$ is an arbitrary string of $\phi$, then $S$ has to be singular by Lemma \ref{sing}, and this proves $ns(\phi)=0$. To prove the converse implication, assume that $G\neq Q\Per(\phi)$ and let $x_0\in G\setminus Q\Per(\phi)$. In particular $x_0\in G\setminus \Per(\phi)$ and it suffices to proceed as in (a) to show that $S_0$ is a string. By Lemma \ref{sing}, $S_0$ is non-singular, and hence $ns(\phi)=\infty$.

\smallskip
(c) Assume $\phi$ to be injective. If $S=\{y_n\}_{n\in\N}$ is an arbitrary string of $\phi$, then $\phi^n(y_0)\neq 0$ for all $n\in \N$. So $S$ cannot be a null string, and this yields $s_0(\phi)=0$. To prove the converse implication, suppose that $\ker\phi\neq 0$ and let $x_0\in \ker\phi\setminus\{0\}$. Clearly, $x_0$ is not periodic and so it suffices to proceed as in (a) to show that $S_0$ is a string. Finally, $S_0$ is a null string since $\phi(x_0)=0$, hence $s_0(\phi)=\infty$. 
\end{proof}

Proposition \ref{AAA}(c) implies that $s_0(G)=0$ if $\phi$ is injective for every non-zero $\f\in\End(G)$. 
In particular, $s_0(\mathbb J_p)=0$.

%

Another way to use Proposition \ref{AAA}(c) to find groups belonging to $\S0$ was suggested us by the referee. Indeed, it is possible to represent a suitable ring $A$, {\em without zero-divisors}, as the endomorphism ring of an abelian group $G$ in such a way that all the endomorphisms of $G$ are injective. In the following example we give an idea of how this can be done.

\begin{example}
Let $A$ be a unitary and associative ring without zero-divisors. Since realization theorems are considerably easier when $A$ is countable,  we distinguish between the countable and uncountable case.
\begin{itemize}
\item[(a)] If $(A,+)$ is a countable, reduced and torsion-free abelian group, we can apply Corner's realization theorem (see \cite[Theorem A]{C1}) to find an abelian group $G$ (countable, reduced and torsion-free) such that $\End(G)\cong A$. Furthermore, since $A$ has no zero-divisors, by \cite[Lemma 1]{C} all $\phi\in\End(G)$ are injective. 
\item[(b)] In the non-countable case, the classical results of \cite{C1} can be substituted by some realization theorem based on Shelah's Black Box (see for example \cite{CG} or \cite[Section 12]{GT}). In particular, if $(A,+)$ is a cotorsion-free abelian group (i.e., $(A,+)$ does not contain any copy of $\Q$, $\Z(p)$, or $\J_p$ for every prime $p$), then there exist cotorsion-free abelian groups $G$ of arbitrarily large order such that $\End(G)\cong A$ (just take $A=\Z\setminus\{0\}$ in \cite[Theorem (6.3)]{CG}). Finally, using arguments similar to those in the proof of \cite[Theorem 2.11]{GS}, one can show that for such $G$ all $\phi\in\End(G)$ are injective.
\end{itemize}
\end{example}

The following property of the string numbers follows directly from Fact \ref{ent*}(e).

\begin{lemma}\label{LemmaY}
If $G=G_1 \oplus G_2$ for some subgroups $G_1,G_2$ of $G$, then: 
\begin{itemize}
\item[(a)] $s(G)=0$ implies $s(G_1)=s(G_2) =0$; 
\item[(b)] $ns(G)=0$ implies $ns(G_1)=ns(G_2) =0$;
\item[(c)] $s_0(G)=0$ implies $s_0(G_1)=s_0(G_2) =0$. 
\end{itemize}
\end{lemma}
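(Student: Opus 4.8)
The plan is to derive Lemma \ref{LemmaY} directly from the additivity property in Fact \ref{ent*}(e), which concerns endomorphisms of a fixed group rather than the global string numbers of groups. The bridge between the two settings is the observation that any endomorphism of a direct summand extends (by zero) to an invariant endomorphism of the whole group, while conversely any endomorphism of the whole group does \emph{not} in general restrict to the summands; so the logical direction of the implications matters and must be handled with care.

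First I would prove part (a). Suppose $s(G)=0$, and let me show $s(G_1)=0$ (the argument for $G_2$ is symmetric). Fix an arbitrary $\phi_1\in\End(G_1)$; I want to show $s(\phi_1)=0$. Define $\phi=\phi_1\oplus 0\in\End(G)$, that is, $\phi$ agrees with $\phi_1$ on $G_1$ and is zero on $G_2$. Then $G_1$ and $G_2$ are both $\phi$-invariant, $G=G_1\oplus G_2$, and with $\phi_2=\phi\restriction_{G_2}=0$ we are exactly in the setting of Fact \ref{ent*}(e)(i). Since $s(G)=0$ and $\phi\in\End(G)$, we have $s(\phi)=0$; by Fact \ref{ent*}(e)(i) this forces $s(\phi\restriction_{G_1})=s(\phi_1)=0$. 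As $\phi_1$ was arbitrary, taking the supremum over all $\phi_1\in\End(G_1)$ gives $s(G_1)=\sup\{s(\phi_1):\phi_1\in\End(G_1)\}=0$, as desired.

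Parts (b) and (c) follow verbatim the same template, replacing $s$ by $ns$ and $s_0$ respectively and invoking Fact \ref{ent*}(e)(ii) and (e)(iii) in place of (e)(i). The only point worth checking is that the trivial summand $\phi_2=0$ indeed has $ns(0)=0$ and $s_0(0)=0$, but this is automatic: the additivity statements are biconditional, so from $ns(\phi)=0$ we get $ns(\phi_1)=0$ directly without needing any information about $\phi_2$ beyond the hypothesis $ns(\phi)=0$ supplied by $ns(G)=0$. Thus the extension-by-zero construction, combined with the ``only if'' direction of each biconditional in Fact \ref{ent*}(e), yields all three implications uniformly.

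I do not expect any genuine obstacle here, since the statement is essentially a repackaging of Fact \ref{ent*}(e) at the level of groups. The one subtlety to keep straight is the quantifier order: the global string number of a summand is a supremum over \emph{all} its endomorphisms, and the argument must produce, for each such endomorphism $\phi_1$, a corresponding $\phi\in\End(G)$ whose restriction to $G_1$ recovers $\phi_1$; the extension by zero accomplishes exactly this, and it is crucial that $G=G_1\oplus G_2$ with both factors invariant so that Fact \ref{ent*}(e) applies. This is why the implications run from $G$ to its summands and not in the reverse direction.
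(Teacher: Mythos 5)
Your proof is correct and follows exactly the route the paper intends: the paper states Lemma \ref{LemmaY} as following ``directly from Fact \ref{ent*}(e)'', and your extension-by-zero construction is precisely the (unwritten) bridge that makes this derivation work. The only difference is that you spell out the quantifier bookkeeping that the paper leaves implicit.
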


The implications in items (a) and (b) of this lemma cannot be reversed, as $s(\Z) =ns(\Z)=0$, but $s(\Z^2)=ns(\Z^2)=\infty$ by Example \ref{Z}(a,b). It is less easy to see that the converse of item (c) does not hold true; Example \ref{hopf+hopf-non-hopf} will give a counterexample. 
 However, using Proposition \ref{AAA}, we can prove a partial converse of Lemma \ref{LemmaY} in Lemma \ref{LemmaY-fi}, but we need first to study the behavior of the surjective core on direct sums and products.

\begin{lemma}\label{sc-sp}
Let $G$ be an abelian group, $\phi \in \End(G)$ and let $H_\alpha$ be a $\phi$-invariant subgroup of $G$ for every $\alpha$ belonging to a set of indices $A$. 
\begin{itemize}
\item[(a)] If $G=\bigoplus_{\alpha\in A}H_\alpha$, then $\sc \phi=\bigoplus_{\alpha\in A}\sc \phi\restriction_{H_\alpha}$.
\item[(b)] If $G=\prod_{\alpha\in A}H_\alpha$, then $\sc \phi=\prod_{\alpha\in A}\sc \phi\restriction_{H_\alpha}$.
\end{itemize}
\end{lemma}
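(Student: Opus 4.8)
The plan is to reduce both statements to a single structural fact: the projections attached to the decomposition commute with $\phi$. Write $p_\alpha:G\to H_\alpha$ for the canonical projection, which is available in both settings because each $H_\alpha$ is a direct summand, respectively a factor, of $G$; note that $\phi$-invariance of $H_\alpha$ makes $\phi\restriction_{H_\alpha}$ a genuine endomorphism of $H_\alpha$, so that $\sc\phi\restriction_{H_\alpha}$ makes sense. I would first establish the commutation identity $p_\alpha\circ\phi=(\phi\restriction_{H_\alpha})\circ p_\alpha$ for every $\alpha\in A$. Granting this, the remaining argument is the same for (a) and (b), so I would organize it around the two inclusions.

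For $\bigoplus_\alpha\sc\phi\restriction_{H_\alpha}\subseteq\sc\phi$ (resp.\ $\prod_\alpha\sc\phi\restriction_{H_\alpha}\subseteq\sc\phi$): since $\phi\restriction_{H_\alpha}$ maps $\sc\phi\restriction_{H_\alpha}$ onto itself, each $\sc\phi\restriction_{H_\alpha}$ is a subgroup of $G$ lying in $\F_\phi$, hence is contained in $\sc\phi$; summing gives the direct-sum case immediately. In the product case one must check in addition that $K:=\prod_\alpha\sc\phi\restriction_{H_\alpha}$ itself lies in $\F_\phi$, i.e.\ $\phi(K)=K$. The inclusion $\phi(K)\subseteq K$ follows componentwise from the commutation identity, while surjectivity $\phi(K)=K$ follows by choosing, for a given $y\in K$, a preimage of each component $y_\alpha$ inside $\sc\phi\restriction_{H_\alpha}$ (possible because $\phi\restriction_{H_\alpha}$ is onto $\sc\phi\restriction_{H_\alpha}$) and assembling these into an element of $K$.

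For the reverse inclusion $\sc\phi\subseteq\bigoplus_\alpha\sc\phi\restriction_{H_\alpha}$ (resp.\ $\subseteq\prod_\alpha\sc\phi\restriction_{H_\alpha}$): since $\phi(\sc\phi)=\sc\phi$, the commutation identity yields $(\phi\restriction_{H_\alpha})(p_\alpha(\sc\phi))=p_\alpha(\phi(\sc\phi))=p_\alpha(\sc\phi)$, so $p_\alpha(\sc\phi)\in\F_{\phi\restriction_{H_\alpha}}$ and therefore $p_\alpha(\sc\phi)\subseteq\sc\phi\restriction_{H_\alpha}$ by maximality of the surjective core. Hence every element of $\sc\phi$ has all its components in the respective cores, which places it in the product of the cores; in the direct-sum case elements have finite support, so they land in the direct sum. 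This closes both inclusions.

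The main obstacle is the commutation identity in the product case (b). For the direct sum it is immediate: each $x$ is a finite sum $\sum_\alpha p_\alpha(x)$ with $p_\alpha(x)\in H_\alpha$, and applying $\phi$ term by term together with $\phi(H_\alpha)\subseteq H_\alpha$ gives $p_\alpha(\phi(x))=(\phi\restriction_{H_\alpha})(p_\alpha(x))$. In an infinite product this distribution over components is no longer available, and the identity is equivalent to the $\phi$-invariance of every complementary factor $\prod_{\beta\neq\alpha}H_\beta$; establishing that $\phi$ genuinely respects the product decomposition (equivalently, that $p_\alpha\circ\phi$ depends only on the $\alpha$-component) is the delicate point on which the product statement rests, and it is where I would concentrate the care of the write-up.
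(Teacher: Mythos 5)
Your part (a) is correct, and in one respect it improves on the paper: the paper's own proof works with the intersections $K\cap H_\alpha$ and asserts that any $K$ with $\phi(K)=K$ satisfies $K\subseteq\bigoplus_{\alpha\in A}(K\cap H_\alpha)$, which is false in general (take $\phi=\mathrm{id}$ on $\Z\oplus\Z$ and $K$ the diagonal); your replacement of intersections by the projections $p_\alpha(K)$ is exactly the repair needed. For a direct sum the commutation identity $p_\alpha\circ\phi=(\phi\restriction_{H_\alpha})\circ p_\alpha$ does follow from the $\phi$-invariance of the $H_\alpha$, so both of your inclusions in (a) go through.

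Part (b), however, contains a genuine gap, and it is precisely the point you flagged and then postponed: the commutation identity for an infinite product. You correctly observe that it is equivalent to the $\phi$-invariance of every complementary factor $\prod_{\beta\neq\alpha}H_\beta$, but you never derive it from the hypotheses --- and you cannot, because under the stated hypotheses it fails, and with it statement (b) itself fails. Concretely, let $p$ be a prime, $G=\prod_{n\in\N}H_n$ with $H_n=\Z(p)$, let $\pi:G\to G/\bigoplus_{n\in\N}H_n$ be the canonical surjection, and let $\psi:G/\bigoplus_{n\in\N}H_n\to G$ be an isomorphism of $\mathbb F_p$-vector spaces, which exists because both spaces have dimension $\cont$. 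Put $\phi=\psi\circ\pi$. Every $H_n$ lies in $\ker\phi$, so each $H_n$ is $\phi$-invariant with $\phi\restriction_{H_n}=0$, whence $\prod_{n\in\N}\sc{\phi\restriction_{H_n}}=0$; but $\phi$ is surjective, so $\sc\phi=G$. Thus the two sides of (b) can differ. To be fair, the paper's proof of (b) --- ``proceed analogously to (a)'' --- makes exactly the unjustified assumption you identified, namely that $\phi$ acts componentwise on the product, so your write-up is the more honest of the two; but as a proof it is incomplete, and no amount of care in the write-up can complete it without strengthening the hypotheses. Both your argument and the paper's become correct verbatim once one assumes in (b) that each complementary factor $\prod_{\beta\neq\alpha}H_\beta$ is $\phi$-invariant (equivalently, that $\phi$ acts componentwise), which is the form in which the lemma is actually invoked in Section \ref{stab-sec}.
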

\begin{proof}
(a) Let $K$ be a subgroup of $G$. Suppose that $\phi (K)=K$ and let $K_\alpha=K\cap H_\alpha$, for every $\alpha\in A$. Since $\phi$ acts componentwise, $\phi\restriction_{H_\alpha}(K_\alpha)=K_\alpha$, for every $\alpha\in A$. This shows that $K\subseteq \bigoplus_{\alpha\in A}K_\alpha\subseteq \bigoplus_{\alpha\in A}\sc \phi \restriction_{H_\alpha}$, and in particular $\sc \phi\subseteq\bigoplus_{\alpha\in A}\sc \phi\restriction_{H_\alpha}$.

On the other hand, let $K_\alpha\subseteq H_\alpha$ be subgroups of $H_\alpha$ such that $\phi\restriction_{H_\alpha}(K_\alpha)=K_\alpha$ for every $\alpha\in A$. Then also $\phi(\bigoplus_{\alpha\in A} K_\alpha)=\bigoplus_{\alpha\in A} K_\alpha$, as $\phi$ acts componentwise. This shows that $\bigoplus_{\alpha\in A} K_\alpha\subseteq \sc \phi$, and in particular $\sc \phi\supseteq\bigoplus_{\alpha\in A}\sc \phi\restriction_{H_\alpha}$.

\smallskip
(b) Proceed analogously to (a).
\end{proof}

\begin{lemma}\label{LemmaY-fi}
Let $G$ be an abelian group such that $G=\bigoplus_{\alpha\in A}H_\alpha$, where $A$ is a set of indices and the $H_\alpha$ are fully invariant subgroups of $G$ for all $\alpha\in A$. Then:
\begin{itemize}
\item[(a)] $s(G)=0$ if and only if $s(H_\alpha)=0$ for every $\alpha\in A$; 
\item[(b)] $ns(G)=0$ if and only if $ns(H_\alpha)=0$ for every $\alpha\in A$;
\item[(c)] $s_0(G)=0$ if and only if $s_0(H_\alpha)=0$ for every $\alpha\in A$.
\end{itemize}
\end{lemma}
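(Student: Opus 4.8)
The plan is to prove each of the three equivalences by the same strategy, treating the two implications separately.

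For the forward implications I would note that each summand $H_\alpha$ is a direct summand of $G$, since $G=H_\alpha\oplus\bigoplus_{\beta\neq\alpha}H_\beta$. Hence Lemma \ref{LemmaY} applies directly and gives that $s(G)=0$ (resp. $ns(G)=0$, $s_0(G)=0$) forces $s(H_\alpha)=0$ (resp. $ns(H_\alpha)=0$, $s_0(H_\alpha)=0$) for every $\alpha\in A$. Observe that this direction does not use full invariance, only that each $H_\alpha$ is a summand.

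For the converse implications --- the substantial part --- I would fix an arbitrary $\phi\in\End(G)$ and aim to show that the relevant string number of $\phi$ is zero. The key use of the hypothesis is that, since each $H_\alpha$ is fully invariant, $\phi(H_\alpha)\subseteq H_\alpha$, so $\phi$ acts componentwise and restricts to an endomorphism $\phi_\alpha:=\phi\restriction_{H_\alpha}\in\End(H_\alpha)$. By assumption $s(H_\alpha)=0$ (resp. $ns$, $s_0$) for every $\alpha$, whence $s(\phi_\alpha)=0$ (resp. $ns(\phi_\alpha)=0$, $s_0(\phi_\alpha)=0$). Moreover, by Lemma \ref{sc-sp}(a) we have $\sc\phi=\bigoplus_{\alpha\in A}\sc\phi_\alpha$, and on this subgroup $\phi\restriction_{\sc\phi}$ still acts componentwise, as $\phi_\alpha\restriction_{\sc\phi_\alpha}$ on each summand.

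It then remains to glue the componentwise information using Proposition \ref{AAA}. For (a), $s(\phi_\alpha)=0$ means, by Proposition \ref{AAA}(a), $\Per(\phi_\alpha\restriction_{\sc\phi_\alpha})=\sc\phi_\alpha$; given $x=\sum_{\alpha\in F}x_\alpha\in\sc\phi$ (with $F$ finite, as the sum is direct, so $x$ has finite support), each $x_\alpha$ is periodic, and taking $n$ to be a common multiple of the finitely many periods yields $\phi^n(x)=\sum_{\alpha\in F}\phi_\alpha^n(x_\alpha)=x$, so $x\in\Per(\phi\restriction_{\sc\phi})$; thus $\Per(\phi\restriction_{\sc\phi})=\sc\phi$ and $s(\phi)=0$. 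For (b) the argument is identical with ``periodic'' replaced by ``quasi-periodic'': the forward $\phi$-orbit of $x$ lies in the finite product of the finite $\phi_\alpha$-orbits of the $x_\alpha$ (Lemma \ref{sing}), so $x$ is quasi-periodic, giving $Q\Per(\phi\restriction_{\sc\phi})=\sc\phi$ and $ns(\phi)=0$ by Proposition \ref{AAA}(b). For (c), since $\ker(\phi\restriction_{\sc\phi})=\bigoplus_{\alpha\in A}\ker(\phi_\alpha\restriction_{\sc\phi_\alpha})$ and each summand is trivial by Proposition \ref{AAA}(c), the total kernel is trivial and $s_0(\phi)=0$. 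As $\phi\in\End(G)$ was arbitrary, this gives $s(G)=0$ (resp. $ns(G)=0$, $s_0(G)=0$). I expect the only genuine obstacle to be the componentwise reduction: it is precisely full invariance (rather than $\phi$-invariance for a single chosen $\phi$) that forces \emph{every} endomorphism of $G$ to decompose along the summands, and it is the finite support of elements of the direct sum that makes the common-multiple-of-periods step legitimate; once these points are secured, the three cases are routine applications of Proposition \ref{AAA} and Lemma \ref{sc-sp}(a).
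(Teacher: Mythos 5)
Your proof is correct and follows essentially the same route as the paper: the forward implications via Lemma \ref{LemmaY}, and the converses by restricting $\phi$ to each fully invariant summand, decomposing the surjective core via Lemma \ref{sc-sp}, and applying Proposition \ref{AAA} componentwise. The paper writes out only case (a) and leaves (b), (c) "similar"; your version just makes explicit the details (common multiple of periods, finite support, decomposition of the kernel) that the paper leaves implicit.
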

\begin{proof}
The ``only if" part of (a), (b) and (c) is a particular case of Lemma \ref{LemmaY}.  We now prove the ``if" part of (a); the ``if'' part of (b) and (c) can be proved similarly.  Suppose that $s(H_\alpha)=0$ for every $\alpha\in A$. Let $\phi\in\End(G)$; we have to show that $s(\phi)=0$. By our hypotheses, each $H_\alpha$ is a fully invariant subgroup of $G$ and so $\phi_\alpha=\phi\restriction_{H_\alpha}$ is an endomorphism of $H_\alpha$ for every $\alpha\in A$.  Let $x=(x_\alpha)_{\alpha\in A}\in G=\bigoplus_{\alpha\in A} H_\alpha$ with $x\in \sc \phi$.
Then $x_\alpha\in \sc \phi_\alpha$ for every $\alpha\in A$, by Lemma \ref{sc-sp}. By Proposition \ref{AAA}(a), since $s(H_\alpha)=0$, we get $x_\alpha\in \Per(\phi_\alpha)$ for every $\alpha\in A$. Therefore, $x\in \Per(\phi)$ and this yields $\sc\phi=\Per(\phi\restriction_{\sc\phi})$. Hence, Proposition \ref{AAA}(a) implies $s(\phi)=0$.
\end{proof}

We prove now that bounded abelian $p$-groups with one of the string numbers zero have to be finite:

\begin{lemma}\label{LemmaW}
If $G$ is an infinite bounded abelian $p$-group for some prime $p$, then $s(G)=ns(G)=s_0(G)=\infty$.
\end{lemma}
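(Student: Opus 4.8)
The plan is to reduce to the direct sum $K^{(\N)}$ treated in Example \ref{Z}(d) by exploiting the structure theory of bounded $p$-groups, and then to transport the conclusion back to $G$ via Lemma \ref{LemmaY}. First I would use that $G$ is bounded: there is some $k\in\N_+$ with $p^kG=0$. By Pr\"ufer's theorem on bounded abelian groups (see \cite{F}), $G$ splits as a direct sum of finite cyclic $p$-groups, say $G\cong\bigoplus_{i=1}^k\Z(p^i)^{(\lambda_i)}$ for suitable cardinals $\lambda_1,\dots,\lambda_k$. The crucial consequence of boundedness here is that only finitely many isomorphism types of cyclic summand can occur.

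Next, since the index set $\{1,\dots,k\}$ is finite while $G$ is infinite, at least one of the cardinals $\lambda_m$ must be infinite. Fixing such an $m$, we have $\lambda_m\geq\aleph_0$, so $\Z(p^m)^{(\lambda_m)}$ --- and hence $G$ itself --- has a direct summand isomorphic to $K^{(\N)}$, where $K=\Z(p^m)$ is a non-trivial abelian group. Thus we may write $G=K^{(\N)}\oplus G'$ for some subgroup $G'\leq G$.

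Finally, I would invoke Example \ref{Z}(d), which gives $s(K^{(\N)})=ns(K^{(\N)})=s_0(K^{(\N)})=\infty$, and then transfer this to $G$ by contraposition in Lemma \ref{LemmaY}: if $s(G)=0$ were to hold, then Lemma \ref{LemmaY}(a) would force $s(K^{(\N)})=0$, contradicting the above, so $s(G)\neq0$ and hence $s(G)=\infty$ by the $\{0,\infty\}$-dichotomy; applying parts (b) and (c) of the same lemma in the same way yields $ns(G)=s_0(G)=\infty$. There is essentially no genuine obstacle in this argument: everything rests on Pr\"ufer's structure theorem, and once the summand $K^{(\N)}$ has been isolated the remaining steps are purely formal. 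The only point demanding a little care is the splitting-off of the infinite summand, i.e.\ observing that boundedness limits the cyclic types to finitely many, so that infinitude of $G$ forces one type to appear $\aleph_0$ times.
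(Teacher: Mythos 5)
Your proof is correct and takes essentially the same route as the paper: the paper's own (one-line) argument likewise extracts a direct summand of $G$ isomorphic to $\Z(p^k)^{(\N)}$ and concludes by Example \ref{Z}(d) and Lemma \ref{LemmaY}. Your appeal to Pr\"ufer's structure theorem merely makes explicit the step---finitely many cyclic types, so one occurs infinitely often---that the paper leaves implicit.
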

\begin{proof}
Since $G$ contains as a direct summand a subgroup isomorphic to $\Z(p^k)^{(\N)}$ for some $k\in\N_+$, apply Example \ref{Z}(d) and Lemma \ref{LemmaY} to conclude that $s(G)=ns(G)=s_0(G)=\infty$. 
\end{proof}

The following result solves Problem \ref{pb} for finitely generated abelian groups.

\begin{proposition}\label{fg}
Let $G$ be a finitely generated abelian group. Then: 
\begin{itemize}
\item[(a)] $s(G)=ns(G)$;
\item[(b)] $s(G)=0$ if and only if $r_0(G)\leq1$;
\item[(c)] $s_0(G)=0$.
\end{itemize}
\end{proposition}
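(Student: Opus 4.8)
The plan is to lean on the structure theorem, writing $G\cong\Z^r\oplus F$ with $r=r_0(G)$ and $F=t(G)$ finite, and to combine it with the surjective-core reduction of Lemma~\ref{s-c} and the characterizations in Proposition~\ref{AAA}. The single observation driving everything is that $G$ is Hopfian: every subgroup $H\leq G$ is again finitely generated, so for any $\phi\in\End(G)$ the surjective core $\sc\phi$ is finitely generated; since the endomorphism induced by $\phi$ on $\sc\phi$ is surjective by construction, it is in fact an automorphism of $\sc\phi$.

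For part~(c) this gives the conclusion at once. As $\phi\restriction_{\sc\phi}$ is an automorphism, $\ker\phi\cap\sc\phi=\ker(\phi\restriction_{\sc\phi})=0$, so Proposition~\ref{AAA}(c) yields $s_0(\phi)=0$; since $\phi$ is arbitrary, $s_0(G)=0$. Part~(a) then costs nothing: by the additivity formula \eqref{s=ns+s0} we have $s(G)=ns(G)+s_0(G)$, and plugging in $s_0(G)=0$ collapses this to $s(G)=ns(G)$.

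For the forward implication of part~(b), assume $r_0(G)\leq1$ and fix $\phi\in\End(G)$. The surjective core $M:=\sc\phi$ is finitely generated with $r_0(M)\leq r_0(G)\leq1$, hence $M\cong\Z\oplus T$ (or $M=T$) with $T=t(M)$ finite. The crucial point is that $\Aut(M)$ is \emph{finite}: writing endomorphisms of $\Z\oplus T$ in block form, the off-diagonal block $\Hom(T,\Z)$ vanishes because $T$ is torsion and $\Z$ is torsion-free, while the remaining data $\Aut(\Z)=\{\pm1\}$, $\Aut(T)$ and $\Hom(\Z,T)\cong T$ are all finite. Therefore the automorphism $\phi\restriction_M$ has finite order, so every element of $\sc\phi$ is periodic, i.e.\ $\Per(\phi\restriction_{\sc\phi})=\sc\phi$, and Proposition~\ref{AAA}(a) gives $s(\phi)=0$; hence $s(G)=0$. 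For the reverse implication, if $r_0(G)\geq2$ then $G$ has a direct summand isomorphic to $\Z^2$, and since $s(\Z^2)=\infty$ by Example~\ref{Z}(b), the contrapositive of Lemma~\ref{LemmaY}(a) forces $s(G)\neq0$, so $s(G)=\infty$.

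The main technical heart is the finiteness of $\Aut(M)$ for a finitely generated group of torsion-free rank at most one, equivalently the statement that every automorphism of such a group has finite order so that all its elements are periodic; this is exactly the vanishing of $\Hom(T,\Z)$ packaged with the block decomposition. Everything else is bookkeeping around the surjective-core reduction, the Hopfian property, the additivity formula, and the recorded value $s(\Z^2)=\infty$, where I anticipate no difficulty.
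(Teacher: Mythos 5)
Your proof is correct, but for the main implication of part (b) it takes a genuinely different route from the paper. Part (c) is essentially the paper's argument (both reduce to the surjective core via Lemma~\ref{s-c}, use that finitely generated groups are Noetherian hence Hopfian, and invoke Proposition~\ref{AAA}(c)), and the reverse direction of (b) via a $\Z^2$ summand, Example~\ref{Z}(b) and Lemma~\ref{LemmaY}(a) is identical. The differences: for $r_0(G)\leq 1$ the paper argues directly on pseudostrings --- writing $G\cong\Z\oplus F$, multiplying a pseudostring $S$ by the exponent $k$ of $F$ to land in the fully invariant subgroup $kG\cong\Z$, and invoking \cite[Lemma 2.10]{DGV} (an infinite pseudostring contains a string) together with $s(\Z)=0$ to force $S$ to be finite --- whereas you argue structurally: $\phi\restriction_{\sc\phi}$ is an automorphism by Hopficity, $\Aut(\sc\phi)$ is finite because $\Hom(T,\Z)=0$ makes automorphisms lower triangular with finite diagonal and off-diagonal data, hence $\phi\restriction_{\sc\phi}$ has finite order and $\Per(\phi\restriction_{\sc\phi})=\sc\phi$, so Proposition~\ref{AAA}(a) applies. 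Your route avoids the external citation to \cite[Lemma 2.10]{DGV} and stays entirely within the toolkit the paper has already set up (Lemma~\ref{s-c}, Proposition~\ref{AAA}), at the cost of the auxiliary observation that $\Aut(M)$ is finite for finitely generated $M$ with $r_0(M)\leq 1$; the paper's computation is more elementary and shows slightly more (every pseudostring, not just every string, is finite). Also, you obtain (a) for free from (c) and the additivity formula \eqref{s=ns+s0}, while the paper establishes $s(G)=ns(G)$ by computing both quantities in each case; your derivation is cleaner and is in fact the same trick the paper itself uses later (e.g., in Lemma~\ref{last}(b)).
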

\begin{proof}
(a,b) The case $r_0(G)=0$ is trivial, since $G$ is finite. 

Assume that $r_0(G)= 1$. We show that $s(G)=ns(G)=0$. In fact, let $\f\in\End(G)$; we have to prove that $s(\f)=0$.
Let $S=\{x_n\}_{n\in\N}$ be a pseudostring of $\f$. Since $G\cong\Z\oplus F$ for some finite subgroup $F$ of $G$, we can write in a unique way $x_n=z_n+f_n$ with $z_n\in\Z$ and $f_n\in F$. Moreover there exists $k\in\N_+$ such that $kF=0$. Since $S$ is a pseudostring of $\f$, we have that $kS=\{kz_n\}_{n\in\N}$ is a pseudostring of $\f\restriction_{kG}:kG\to kG$; note that $kG\cong \Z$ is a fully invariant subgroup of $G$. If $kS$ were infinite then, $kS$ would contain a string of $\phi\restriction_{kG}$ in view of \cite[Lemma 2.10]{DGV}; therefore, $s(kG)=\infty$ and this contradicts Example \ref{Z}(a), as $kG\cong\Z$. Hence $kS$ is finite and consequently $\{z_n\}_{n\in\N}$ is finite as well. Since $S$ is contained in $\{z_n\}_{n\in\N}\oplus F$, it follows that $S$ is finite, hence $S$ is not a string. 

Suppose that $r_0(G)>1$. Then $G$ has $\Z^2$ as direct summand and so $s(G)=ns(G)=\infty$ thanks to Example \ref{Z}(b) and Lemma \ref{LemmaY}(a,b).

\smallskip
(c) Let $\f\in\End(G)$. Since every subgroup of $G$ is finitely generated, by Lemma \ref{s-c} we can assume without loss of generality that $\phi$ is surjective. Then $\phi$ is also injective and so $s_0(\phi)=0$ by Proposition \ref{AAA}(c).
\end{proof}



As a corollary of Proposition \ref{fg} we obtain the following characterization of free abelian groups with string numbers zero.

\begin{corollary}\label{free}
Let $G$ be a free abelian group. Then:
\begin{itemize}
\item[(a)] $s(G)=ns(G)$;
\item[(b)] $s(G)=0$ if and only if $r_0(G)\leq 1$;
\item[(c)] $s_0(G)=0$ if and only if $r_0(G)$ is finite.
\end{itemize}
\end{corollary}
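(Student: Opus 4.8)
The plan is to split into two cases according to whether $r_0(G)$ is finite or infinite, exploiting that a free abelian group is isomorphic to $\Z^{(I)}$ with $|I|=r_0(G)$ and that the finite-rank case is already settled by Proposition \ref{fg}. In fact, once the values of all three string numbers are pinned down in each case, assertions (a), (b) and (c) are read off simultaneously.

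First I would dispatch the finite-rank case. If $r_0(G)$ is finite, then $G$ is free of finite rank, hence finitely generated, and Proposition \ref{fg} applies verbatim: part (a) gives $s(G)=ns(G)$; part (b) gives $s(G)=0$ if and only if $r_0(G)\leq 1$; and part (c) gives $s_0(G)=0$, which is exactly the desired conclusion (c) here since $r_0(G)$ is finite. So nothing new is needed in this case.

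Next I would treat the infinite-rank case. If $r_0(G)$ is infinite, choose a countably infinite subset $J\subseteq I$ and decompose $G\cong \Z^{(J)}\oplus\Z^{(I\setminus J)}$ with $\Z^{(J)}\cong\Z^{(\N)}$, so that $\Z^{(\N)}$ is a direct summand of $G$. By Example \ref{Z}(d) we have $s(\Z^{(\N)})=ns(\Z^{(\N)})=s_0(\Z^{(\N)})=\infty$, whence Lemma \ref{LemmaY} forces $s(G)=ns(G)=s_0(G)=\infty$. This immediately gives (a) (both sides equal $\infty$), (b) (the left-hand side is $\infty$ and indeed $r_0(G)>1$), and (c) (the left-hand side is $\infty$ while $r_0(G)$ is infinite), completing the case distinction.

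The argument is essentially bookkeeping over the three ranges $r_0(G)\leq 1$, $2\leq r_0(G)<\infty$, and $r_0(G)$ infinite, and I do not expect any genuine obstacle. The only point deserving a word of care is the existence of a $\Z^{(\N)}$ direct summand in the infinite-rank case, but this is immediate for free abelian groups by simply splitting off a countable basis, so the splitting step is routine.
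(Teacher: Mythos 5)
Your proposal is correct and uses essentially the same ingredients and structure as the paper's proof: Proposition \ref{fg} for the finite-rank case, and a $\Z^{(\N)}$ direct summand together with Example \ref{Z}(d) and Lemma \ref{LemmaY} for the infinite-rank case. The only cosmetic difference is that the paper handles (a,b) uniformly for all ranks $>1$ via a $\Z^2$ summand and Example \ref{Z}(b), while you route the finite-rank part of (a,b) through Proposition \ref{fg}, whose proof uses that same $\Z^2$ argument internally.
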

\begin{proof}
(a,b) If $r_0(G)\geq 1$, then $G$ has $\Z^2$ as a direct summand. By Example \ref{Z}(b) $ns(\Z^2)=s(\Z^2)=\infty$ and so Lemma \ref{LemmaY}(a,b) gives $ns(G)=s(G)=\infty$. If $r_0(G)\leq 1$, $G=\Z$ and $ns(G)=s(G)=0$ by Example \ref{Z}(a).

\smallskip
(c) If $r_0(G)$ is finite, then $G$ is finitely generated and so Proposition \ref{fg} gives $s_0(G)=0$. If $r_0(G)$ is infinite, then $\Z^{(\N)}$ is a direct summand of $G$. Hence $s_0(G)=\infty$ by Example \ref{Z}(d) and Lemma \ref{LemmaY}(c).
\end{proof}

The next proposition gives a sufficient condition for an abelian group $G$ to have non-singular string number zero. 

\begin{proposition}\label{U->ns=0}
Let $G$ be an abelian group such that $G=\bigcup_{n\in\N} G_n$, where $G_n$ is a finite fully invariant subgroup of $G$ for every $n\in\N$. Then $ns(G)=0$.
\end{proposition}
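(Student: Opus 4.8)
The plan is to show directly that $ns(\phi)=0$ for every $\phi\in\End(G)$, which by the definition of $ns(G)$ as a supremum gives $ns(G)=0$. By Proposition \ref{AAA}(b) this amounts to verifying that $Q\Per(\phi\restriction_{\sc\phi})=\sc\phi$, but it is cleaner to argue via Lemma \ref{sing} and prove the stronger statement that \emph{every} string of $\phi$ is singular.

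So I would fix an arbitrary $\phi\in\End(G)$ and let $S=\{x_n\}_{n\in\N}$ be a string of $\phi$. Since $G=\bigcup_{n\in\N}G_n$, there is some $m\in\N$ with $x_0\in G_m$. Here is where full invariance enters: because $G_m$ is fully invariant, $\phi(G_m)\subseteq G_m$, and hence $\phi^k(x_0)\in G_m$ for every $k\in\N$. As $G_m$ is finite, the forward orbit $\{\phi^k(x_0):k\in\N\}$ is contained in the finite set $G_m$, so it is finite. By the equivalence (a)$\Leftrightarrow$(b) of Lemma \ref{sing}, $S$ is singular.

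Since this holds for every string $S$ of $\phi$, the endomorphism $\phi$ admits no non-singular string, whence $ns(\phi)=0$. As $\phi\in\End(G)$ was arbitrary, I conclude $ns(G)=\sup\{ns(\phi):\phi\in\End(G)\}=0$.

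The computation itself is routine; the one point that requires attention is the role of \emph{full} invariance, and I expect that to be the only genuine obstacle to state precisely. Because $ns(G)$ is a supremum over \emph{all} endomorphisms of $G$, one cannot fix a single $\phi$ in advance and merely require the $G_n$ to be $\phi$-invariant: one needs each $G_m$ to be invariant under whichever $\phi$ is given, and this is exactly what full invariance guarantees. Once this is observed, finiteness of $G_m$ forces every forward orbit to be finite, i.e.\ every element of $G$ is quasi-periodic for every endomorphism, which is precisely the obstruction to the existence of non-singular strings.
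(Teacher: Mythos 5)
Your proof is correct and follows essentially the same route as the paper's: locate $x_0$ in some finite fully invariant $G_m$, observe that full invariance traps the forward orbit $\{\phi^k(x_0):k\in\N\}$ inside the finite set $G_m$, and conclude via Lemma \ref{sing} that every string is singular, hence $ns(\phi)=0$ for all $\phi\in\End(G)$. The paper phrases the last step as ``a pigeon-hole argument'' where you cite the equivalence (a)$\Leftrightarrow$(b) of Lemma \ref{sing}, but these are the same observation.
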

\begin{proof}
Let $\f\in\End(G)$ and let $S=\{x_n\}_{n\in\N}$ be a string of $\phi$. We show that $S$ is singular. Indeed, there exists $n\in\N$ such that $x_0\in G_n$. Since $G_n$ is finite and fully invariant, $\{\phi^n(x_0):n\in\N\}$ is finite as well, as it is contained in $G_n$, and so $S$ is singular by a pigeon-hole argument. 
\end{proof}

The converse implication of this proposition does not hold true; indeed, we shall see that the abelian group $G$ given in Example \ref{hopf-s0=infty} is not countable, yet $ns(G)=0$, while every abelian group $G$ satisfying the hypotheses of Proposition \ref{U->ns=0} is necessarily torsion and countable. 

\smallskip
As a consequence of Proposition \ref{U->ns=0} we obtain the following example.

\begin{example}\label{prufer}
For every prime $p$ and every $n\in\N_+$, we have that:
\begin{itemize}
\item[(a)] $s_0((\Z(p^\infty))^n)=s((\Z(p^\infty))^n)=\infty$ by \cite[Example 3.19]{DGV} and Lemma \ref{LemmaY}(a,c), and $ns(\Z(p^\infty)^n)=0$ by Proposition \ref{U->ns=0};
\item[(b)] $s_0((\Q/\Z)^n)=s((\Q/\Z)^n)=\infty$ by (a) and Lemma \ref{LemmaY}(a,c), and $ns((\Q/\Z)^n)=0$ by Proposition \ref{U->ns=0}.
\end{itemize}
\end{example}

The next lemma will be covered by Theorem \ref{div-th}, which characterizes the divisible abelian groups with string numbers zero. We state it here because it will be useful in what follows.

\begin{lemma}\label{div}
Let $D$ be a non-trivial divisible abelian group.
\begin{itemize}
\item[(a)] If $D$ is torsion, then $s_0(D)=\infty$. 
\item[(b)] If $D$ is torsion-free, then $ns(D)=\infty$.
\end{itemize}
In particular, $s(D)=\infty$. 
\end{lemma}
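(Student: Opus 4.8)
The plan is to reduce both statements to the already-computed base values $s_0(\Z(p^\infty))=\infty$ and $ns(\Q)=\infty$, exhibiting in each case the relevant Pr\"ufer or rational direct summand via the structure theory of divisible abelian groups and then invoking the direct-summand behaviour recorded in Lemma \ref{LemmaY}. Throughout I would use that the string numbers are two-valued, so that the contrapositive of Lemma \ref{LemmaY} reads: if one summand has string number $\infty$, then so does the whole group.

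For part (a), I would recall that a torsion divisible group decomposes as $D\cong\bigoplus_p\Z(p^\infty)^{(\kappa_p)}$; since $D\neq0$, some $\kappa_p\geq1$, and hence $D\cong\Z(p^\infty)\oplus D'$ for a suitable prime $p$ and subgroup $D'$. Because $s_0(\Z(p^\infty))=\infty$ (Table \ref{table1}, and Example \ref{prufer}(a)), Lemma \ref{LemmaY}(c) then forces $s_0(D)=\infty$. For part (b), a non-trivial torsion-free divisible group is a non-zero $\Q$-vector space, so $D\cong\Q^{(\lambda)}$ with $\lambda\geq1$ and $\Q$ splits off as a direct summand; since $ns(\Q)=\infty$ by Example \ref{Z}(c), Lemma \ref{LemmaY}(b) yields $ns(D)=\infty$.

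For the concluding assertion I would appeal to the identity $s(G)=ns(G)+s_0(G)$ of \eqref{s=ns+s0}: in case (a) we have $s_0(D)=\infty$ and in case (b) we have $ns(D)=\infty$, so $s(D)=\infty$ in both situations. For an arbitrary non-trivial divisible $D$, one writes $D=t(D)\oplus V$ with $t(D)$ the (divisible) torsion part and $V$ torsion-free divisible, at least one summand being non-zero, and concludes $s(D)=\infty$ from parts (a), (b) and Lemma \ref{LemmaY}(a). I do not expect a genuine obstacle here: the whole argument is a structural reduction to the two base cases $\Z(p^\infty)$ and $\Q$, and the only points requiring attention are citing the correct direction of Lemma \ref{LemmaY} and recalling the $\{0,\infty\}$ dichotomy of the values.
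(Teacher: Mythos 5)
Your proof is correct and follows essentially the same route as the paper: both parts split off a $\Z(p^\infty)$ (resp.\ $\Q$) summand and invoke Example \ref{prufer}(a) (resp.\ Example \ref{Z}(c)) together with the contrapositive of Lemma \ref{LemmaY}, and the final claim is obtained exactly as in the paper via $D\cong t(D)\oplus D/t(D)$, the identity \eqref{s=ns+s0} and Lemma \ref{LemmaY}(a). No gaps; your more explicit appeal to the structure theorem for divisible groups is just a spelled-out version of the paper's ``$D$ has $\Z(p^\infty)$ as a direct summand.''
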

\begin{proof}
(a) Since $D$ has $\Z(p^\infty)$ as a direct summand for some prime $p$, and $s_0(\Z(p^{\infty}))=\infty$ by Example \ref{prufer}(a), Lemma \ref{LemmaY}(c) yields $s_0(D)=\infty$.

\smallskip
(b) Now $D$ has $\Q$ as a direct summand and $ns(\Q)=\infty$ by Example \ref{Z}(c); hence Lemma \ref{LemmaY}(b) gives $ns(D)=\infty$.

\smallskip
Since $D\cong t(D)\oplus D/t(D)$, and at least one of $t(D)$ and $D/t(D)$ is non-trivial, we can conclude that $s(D)=\infty$ by (a), (b) and Lemma \ref{LemmaY}(a).
\end{proof}

As a clear consequence of Lemmas \ref{div} and \ref{LemmaY}, we have that an abelian group $G$ is reduced if one of the following conditions holds: 
\begin{itemize}
\item[(a)] $s(G) =0$;
\item[(b)] $G$ is torsion and $s_0(G)=0$;
\item[(c)] $G$ is torsion-free and $ns(G)=0$.
\end{itemize}

\section{Torsion abelian groups with no strings}\label{s=0-tor}

In this section we consider Problem \ref{pb} for torsion abelian groups. In particular, Theorem \ref{ThB} shall solve it for the string number and the null string number, characterizing all torsion abelian groups in $\mathfrak S$ and in $\S0$. Moreover, we will provide sufficient and necessary conditions for a torsion abelian group to belong to $\Sns$.

\medskip
The next lemma is a clear consequence of Lemma \ref{LemmaY-fi}. In fact, every torsion abelian group $G$ is the direct sum of its $p$-components
$t_p(G)$, that are fully invariant in $G$. We state it  explicitly as it shows that working with the string numbers of torsion abelian groups it is possible to reduce to the case of abelian $p$-groups.

\begin{corollary}\label{red-to-p}
Let $G$ be a torsion abelian group. Then:
\begin{itemize}
\item[(a)] $s(G)=0$ if and only if $s(t_p(G))=0$ for every prime $p$;
\item[(b)] $ns(G)=0$ if and only if $ns(t_p(G))=0$ for every prime $p$;
\item[(c)] $s_0(G)=0$ if and only if $s_0(t_p(G))=0$ for every prime $p$.
\end{itemize}
\end{corollary}

The following lemma characterizes the divisible torsion abelian groups in $\mathfrak S$, $\Sns$ and $\S0$.

\begin{lemma}\label{torsion-div}
Let $p$ be a prime and let $D$ be a divisible abelian $p$-group. Then:
\begin{itemize}
\item[(a)] $s(D)=0$ if and only if $s_0(D)=0$ if and only if $D=0$;
\item[(b)] $ns(D)=0$ if and only if $r_p(D)$ is finite.
\end{itemize}
\end{lemma}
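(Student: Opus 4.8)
The plan is to reduce everything to the structure theorem for divisible abelian $p$-groups, namely that $D\cong \Z(p^\infty)^{(r_p(D))}$, and then to assemble the conclusions from the already-computed values for powers and infinite direct sums of the Prüfer group. No genuinely new argument is needed; the work lies entirely in selecting which earlier results to invoke and in translating the cardinal condition on $r_p(D)$ into the presence of the right direct summands.

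For item (a), I would first dispose of the trivial direction: if $D=0$ then obviously $s(D)=s_0(D)=0$. For the converse, suppose $D\neq0$. Since $D$ is a non-trivial divisible abelian $p$-group, it has $\Z(p^\infty)$ as a direct summand. By Example \ref{prufer}(a) we have $s_0(\Z(p^\infty))=s(\Z(p^\infty))=\infty$, so Lemma \ref{LemmaY}(a,c) forces $s_0(D)=s(D)=\infty$. (Alternatively, one may invoke Lemma \ref{div}(a) directly, since $D$ is divisible and torsion, to get $s_0(D)=\infty$, and then $s(D)\geq s_0(D)=\infty$.) This establishes all three equivalences in (a).

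For item (b) I would split on whether $r_p(D)$ is finite. If $r_p(D)=n$ is finite, then $D\cong \Z(p^\infty)^n$, and Example \ref{prufer}(a) gives precisely $ns(\Z(p^\infty)^n)=0$, hence $ns(D)=0$. If instead $r_p(D)$ is infinite, then $D\cong\Z(p^\infty)^{(\kappa)}$ with $\kappa\geq\aleph_0$, so $D$ contains $\Z(p^\infty)^{(\N)}$ as a direct summand. Since $\Z(p^\infty)$ is non-trivial, Example \ref{Z}(d) yields $ns(\Z(p^\infty)^{(\N)})=\infty$, and Lemma \ref{LemmaY}(b) then gives $ns(D)=\infty$. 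Combining the two cases proves the stated equivalence.

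There is no real obstacle here; the only point requiring a little care is making explicit that finiteness of $r_p(D)$ is exactly the dividing line between $D$ being (isomorphic to) a finite power of $\Z(p^\infty)$ and containing a countable direct sum $\Z(p^\infty)^{(\N)}$ as a summand, which is what lets the two distinct earlier computations (Example \ref{prufer}(a) versus Example \ref{Z}(d)) apply in the respective ranges.
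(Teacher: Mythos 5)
Your proof is correct and follows essentially the same route as the paper: for (a) the paper simply cites Lemma \ref{div}, whose proof is exactly the summand argument you inline (a non-trivial divisible $p$-group has $\Z(p^\infty)$ as a direct summand, then Example \ref{prufer}(a) and Lemma \ref{LemmaY} apply), and for (b) the paper likewise uses $D\cong\Z(p^\infty)^{(\alpha_p)}$ and splits on whether $r_p(D)$ is finite, citing Example \ref{prufer}(a) in the finite case and Example \ref{Z}(d) together with Lemma \ref{LemmaY}(b) in the infinite case. Your only addition is making explicit the passage from an infinite $\alpha_p$ to the summand $\Z(p^\infty)^{(\N)}$, which the paper leaves implicit.
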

\begin{proof}
(a) Follows from Lemma \ref{div}.

\smallskip
(b) Since $D\cong\Z(p^\infty)^{(\alpha_p)}$ for some cardinal $\alpha_p$, if $r_p(D)=\alpha_p$ is infinite, then $ns(D)=\infty$ by Example \ref{Z}(d) and Lemma \ref{LemmaY}(b). If $r_p(D)$ is finite, then $ns(D)=0$ by Example \ref{prufer}(a).
\end{proof}

The following lemma shows that, in order to calculate one of the string numbers of an abelian $p$-group $G$, we can always suppose $G$ to be either divisible or reduced. 

\begin{lemma}\label{ridotti}
Let $p$ be a prime and $G$ an abelian $p$-group. We can write $G=d(G)\oplus R$ where $d(G)$ is the maximum divisible subgroup of $G$ and $R\cong G/t(G)$ is  reduced. Then:
\begin{itemize}
\item[(a)] $s(G)=s(d(G))+s(R)$;
\item[(b)] $ns(G)=ns(d(G))+ns(R)$;
\item[(c)] $s_0(G)=s_0(d(G))+s_0(R)$.
\end{itemize}
\end{lemma}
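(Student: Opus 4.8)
My plan is to read each of the three identities as a statement about values in $\{0,\infty\}$, so that $s(G)=s(d(G))+s(R)$ is equivalent to ``$s(G)=0$ iff $s(d(G))=0$ and $s(R)=0$'', and similarly for $ns$ and $s_0$. The forward implications follow at once from Lemma \ref{LemmaY} applied to the direct decomposition $G=d(G)\oplus R$. For the reverse implications of (a) and (c) I would use Lemma \ref{torsion-div}(a): a divisible $p$-group $D$ has $s(D)=0$ (respectively $s_0(D)=0$) only if $D=0$, so the hypothesis $s(d(G))=0$ or $s_0(d(G))=0$ forces $d(G)=0$, whence $G=R$ and the equality is trivial. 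Thus the only substantial content is the reverse implication of (b): assuming $ns(d(G))=0$ and $ns(R)=0$, show $ns(G)=0$.

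To prove this I would set $D=d(G)$ and first record, via Lemma \ref{torsion-div}(b), that $ns(D)=0$ forces $r_p(D)$ finite, so $D\cong\Z(p^\infty)^{k}$ with $k$ finite; the key structural fact I would exploit is that each socle layer $D[p^{j}]=D\cap G[p^{j}]$ is \emph{finite} and fully invariant in $G$. Then I would fix $\phi\in\End(G)$ and an arbitrary string $S=\{x_n\}_{n\in\N}$ of $\phi$, so that by Lemma \ref{sing} it suffices to prove the forward orbit $\{\phi^{n}(x_0):n\in\N\}$ is finite. Writing $\pi_D,\pi_R$ for the projections of $G=D\oplus R$, full invariance of $D$ gives $\phi(D)\subseteq D$ and hence $\pi_R\phi=\overline\phi\,\pi_R$ for the induced endomorphism $\overline\phi\in\End(R)$ on $R\cong G/D$; putting $\phi^{n}(x_0)=d_n+s_n$ with $d_n=\pi_D\phi^{n}(x_0)$ and $s_n=\pi_R\phi^{n}(x_0)=\overline\phi^{\,n}(\pi_R x_0)$ splits the orbit into its $D$- and $R$-parts.

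I would control the $R$-part first: the elements $\pi_R x_n$ form a pseudostring of $\overline\phi$, since $\overline\phi(\pi_R x_n)=\pi_R\phi(x_n)=\pi_R x_{n-1}$, so $\pi_R x_0$ lies in the surjective core $\sc{\overline\phi}$ (every pseudostring is contained in it). Since $ns(R)=0$ gives $ns(\overline\phi)=0$, Proposition \ref{AAA}(b) makes every element of $\sc{\overline\phi}$ quasi-periodic, so the forward orbit $\{s_n\}$ is finite. For the $D$-part, a direct computation using $\phi(D)\subseteq D$ yields the recurrence $d_{n+1}=(\phi\restriction_D)(d_n)+c_n$, where the forcing term $c_n=\pi_D\phi(s_n)$ ranges over the finite set $\pi_D\phi(\{s_n\})$.

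The crux, and the only genuine obstacle, is that the reduced complement $R$ is \emph{not} fully invariant, so neither Fact \ref{ent*}(e) nor Lemma \ref{LemmaY-fi} applies and I cannot split off $R$ directly; I must instead show by hand that $\{d_n\}$ is finite even though $D$ is infinite. This is where the socle-boundedness pays off: choosing $L$ so large that $d_0$ and all the finitely many $c_n$ lie in $D[p^{L}]$, and using that $\phi\restriction_D$ preserves the finite fully invariant subgroup $D[p^{L}]$, the recurrence keeps every $d_n$ inside $D[p^{L}]$ by induction, so $\{d_n\}$ is finite. Consequently $\{\phi^{n}(x_0)\}=\{d_n+s_n\}$ is finite, $x_0$ is quasi-periodic, and $S$ is singular; as $S$ and $\phi$ were arbitrary, $ns(\phi)=0$ for all $\phi$, i.e. $ns(G)=0$, which completes the reverse implication of (b) and hence the proof.
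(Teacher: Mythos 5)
Your proof is correct and takes essentially the same route as the paper's: the forward implications come from Lemma \ref{LemmaY}, the converses of (a) and (c) from the fact that a non-trivial divisible $p$-group cannot have $s$ or $s_0$ equal to zero, and for the converse of (b) both arguments use finiteness of $r_p(d(G))$ (Lemma \ref{torsion-div}), project a string onto $R$ to get a pseudostring of the induced endomorphism $\phi_r$ lying in its surjective core, apply Proposition \ref{AAA}(b) there, and exploit finiteness of a socle layer of $d(G)$ to control the divisible component. The only differences are cosmetic: the paper argues by contradiction and bounds the divisible part by the order $k$ of $x_0$, via $T(\phi,\langle x_0\rangle)\subseteq d(G)[k]\oplus T(\phi_r,\langle c_0\rangle)$, whereas you argue directly that every string is singular, using the recurrence $d_{n+1}=\phi(d_n)+c_n$ kept inside the finite subgroup $D[p^L]$ by induction.
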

\begin{proof}
(a) Follows from (b) and (c) using \eqref{s=ns+s0}.

\smallskip
(b) By Lemma \ref{LemmaY}(b) we have that $ns(G)\geq ns(d(G))+ns(R)$, therefore we need to prove only that $ns(d(G))=ns(R)=0$ implies $ns(G)=0$. So suppose that $ns(d(G))=ns(R)=0$. By Lemma \ref{torsion-div} the $p$-rank of $d(G)$ has to be finite. Consider an endomorphisms $\phi:G\to G$ and suppose, looking for a contradiction, that $S=\{x_n\}_{n\in\N}$ is a non-singular string of $\phi$. For every $n\in\N$, we can write uniquely $x_n=d_n+c_n$ with $d_n\in d(G)$ and $c_n\in R$. Furthermore, since $d(G)$ is fully invariant in $G$, we can represent $\phi$ as a matrix $\left(\begin{smallmatrix}\phi_d &\phi_{rd}\\ 0&\phi_r\end{smallmatrix}\right):d(G)\oplus R\to d(G)\oplus R$. 
We verify that $ns(\phi_r)=\infty$, and this will provide the contradiction we are looking for. 

Let $S'=\{c_n\}_{n\in\N}$. It is clear that $S'$ is a pseudostring of $\phi_r$ and so $c_0\in \sc \phi_r$. Consider now the trajectory $T(\phi,\langle x_0\rangle)$, that is an infinite set by our assumption that $S$ is non-singular. On the other hand, let $k$ be the order of $x_0$, then 
$$T(\phi,\langle x_0\rangle)\subseteq d(G)[k]\oplus T(\phi_r,\langle c_0\rangle).$$
Finally notice that $d(G)[k]$ is finite by our assumption on the $p$-rank of $d(G)$, and so $T(\phi_r,\langle c_0\rangle)$ has to be infinite. This is equivalent to say that $c_0\in \sc \phi_r\setminus Q\Per (\phi_r)$. We can now conclude by Lemma \ref{AAA}(b).

\smallskip
(c) Follows from Lemma \ref{LemmaY}(c) and Lemma \ref{div}.
\end{proof}

Since the string numbers of torsion divisible abelian groups are computed in Lemma \ref{torsion-div}, we have that Corollary \ref{red-to-p} and Lemma \ref{ridotti} give the possibility to consider only reduced abelian $p$-groups in the computation of the string numbers of torsion abelian groups.

\medskip
We recall that an abelian $p$-group $G$ is \emph{separable} if $p^\omega G=0$. Consider now, for a prime $p$, two separable abelian $p$-groups $G$ and $H$. A homomorphism $\phi:G\to H$ is \emph{small} if 
\begin{equation*}\label{small}
\forall   k\in\N,  \  \ \exists n\in \N,  \text{ such that } (e(x)\geq n)\Rightarrow (e(\phi(x))\leq e(x)-k)\ \ \forall x\in G;
\end{equation*}
where $e(-)$ denotes the exponent of an element. In other words, $\phi$ is small if for every $k\in\N$, there exists $n\in\N$ such that $\phi((p^nG)[p^k])=0$. We will use the properties of small endomorphisms in the proof of the next theorem, which solves completely Problem \ref{pb} for the string number and the null string number in the case of torsion abelian groups, in view of the previous reduction to reduced abelian $p$-groups.

\begin{theorem}\label{ThB}
Let $p$ be a prime and let $G$ be a reduced abelian $p$-group.
Then the following conditions are equivalent:
\begin{itemize}
\item[(a)]$s(G)=0$;
\item[(b)]$s_0(G)=0$;
\item[(c)] $G$ is finite. 
\end{itemize}
In particular, $s(G)=s_0(G)$. 
\end{theorem}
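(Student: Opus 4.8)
The plan is to establish the cycle of implications together with the easy observations that pin the two values inside $\{0,\infty\}$. The implications $(c)\Rightarrow(a)$ and $(a)\Rightarrow(b)$ are immediate: a string is by definition an infinite set, so a finite group admits no string and $s(G)=0$; and $(a)\Rightarrow(b)$ is contained in \eqref{s=ns+s0}, since $s(G)=ns(G)+s_0(G)\geq s_0(G)$. The whole content is therefore the implication $(b)\Rightarrow(c)$, which I would prove in contrapositive form: \emph{every infinite reduced abelian $p$-group $G$ satisfies $s_0(G)=\infty$.}

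First I would split according to a basic subgroup $B$ of $G$. If $B$ is bounded, then the Kulikov theory gives $G=B\oplus D$ with $D=p^\omega G$ divisible; as $G$ is reduced this forces $D=0$, so $G=B$ is bounded, hence infinite bounded, and $s_0(G)=\infty$ by Lemma \ref{LemmaW}. Thus I may assume $B$ is unbounded, that is, $B=\bigoplus_k\langle b_k\rangle$ is an unbounded direct sum of cyclic $p$-groups.

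The core of the argument is a ``shift with growing orders''. Since the orders of the $b_k$ are unbounded, I would pass to a subsequence giving cyclic summands $\langle a_j\rangle$ of $B$ with $\mathrm{ord}(a_j)=p^{e_j}$, where $e_1<e_2<\cdots$ and the gaps $e_j-e_{j-1}\to\infty$; set $C=\bigoplus_{j}\langle a_j\rangle$, a direct summand of $B$. Define $\sigma\colon C\to C$ by $\sigma(a_1)=0$ and $\sigma(a_j)=a_{j-1}$ for $j\geq 2$. This is a well-defined homomorphism because $\mathrm{ord}(a_{j-1})$ divides $\mathrm{ord}(a_j)$; it is surjective, since each $a_i=\sigma(a_{i+1})$; and it is not injective, since $a_1\in\ker\sigma\setminus\{0\}$. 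Hence $\sc\sigma=C$ and $\ker\sigma\cap\sc\sigma\ni a_1\neq 0$, so $s_0(\sigma)=\infty$ by Proposition \ref{AAA}(c).

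It remains to transfer this to a genuine endomorphism of $G$, and this is the main obstacle. Extending $\sigma$ by $0$ on a complement of $C$ in $B$ yields $\sigma_B\in\End(B)$, which I claim is \emph{small}: given $k$, the gap condition $e_j-e_{j-1}\geq k$ holds for all large $j$, and this forces $\sigma_B$ to vanish on $(p^nB)[p^k]$ once $n$ exceeds the finitely many exceptional exponents $e_{j-1}$. By the theory of small homomorphisms (Pierce), a small homomorphism $\sigma_B\colon B\to G$ from the basic subgroup extends to an endomorphism $\tilde\sigma$ of $G$. Then $C$ is $\tilde\sigma$-invariant with $\tilde\sigma\restriction_C=\sigma$, so monotonicity under invariant subgroups (Fact \ref{ent*}(c)) gives $s_0(G)\geq s_0(\tilde\sigma)\geq s_0(\sigma)=\infty$, which finishes $(b)\Rightarrow(c)$. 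Finally, the equality $s(G)=s_0(G)$ follows from the equivalence $(a)\Leftrightarrow(b)$ together with $s(G)\geq s_0(G)$ from \eqref{s=ns+s0} and the dichotomy of the values. I expect the delicate point to be the extension step, in particular making it work when $G$ is not separable, where one either passes to the separable quotient $G/p^\omega G$ or invokes the non-separable form of the small-homomorphism extension.
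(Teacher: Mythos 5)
Your proposal is correct and follows essentially the same route as the paper's proof: reduce to the case of an unbounded basic subgroup $B$ (using reducedness and Lemma \ref{LemmaW}), define a shift along a subsequence of cyclic summands whose exponent gaps tend to infinity (the paper uses the factorial subsequence $e_{m!}\mapsto e_{(m-1)!}$, killing all other generators), verify smallness, and extend to all of $G$ by Pierce's theorem \cite[Theorem 4.4]{P} via purity of $B$ in $G$. The only cosmetic differences are that the paper exhibits the null string $\{e_{n!}\}_{n\in\N_+}$ of the extended endomorphism directly, whereas you get $s_0=\infty$ from Proposition \ref{AAA}(c) applied to $C$ together with monotonicity under invariant subgroups (Fact \ref{ent*}(c)), and your closing worry about non-separability is resolved exactly as in the paper, by applying Pierce's extension theorem to the pure subgroup $B$ of the reduced group $G$.
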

\begin{proof}
(a)$\Rightarrow$(b) Is trivial.

\smallskip
(b)$\Rightarrow$(c) 
Assume that $G$ is infinite. Hence $G$ is unbounded by Lemma \ref{LemmaW}. Let $B$ be a basic subgroup of $G$; $B$ must then be unbounded as well. Then there exist  two subgroups $B'$ and $C$ of $B$ such that $B=B'\oplus C$, where $B'=\bigoplus_{n\in\N}\Z(p^{k_n})$ with $0<k_0<k_1<\ldots<k_n<\ldots$ in $\N$. 
Denote by $e_n=(0,\ldots,0,1_{\Z(p^{k_n})},0,\ldots)$ the $n$-th standard generator of $B'$, for all $n\in\N$.
Define the endomorphism $\phi_{B'}$ of $B'$ on the generators, by letting
$$e_1\mapsto 0,\ \ e_{m!}\mapsto e_{(m-1)!}\ \text{ for every $m\geq 2$}, \ \ e_n\mapsto 0\ \text{ if }\  n\notin \{k!:k\geq 2\}.$$
Then $\phi_{B'}$ is a small endomorphism of $B'$ and it can be extended to a small endomorphism $\phi_B$ of $B$ simply by letting $\phi_B(C)=0$. Since $B$ is pure in $G$ and $\phi_B$ is small, we can find an endomorphism $\phi$ of $G$ extending $\phi_B$ (see \cite[Theorem 4.4]{P}). Now it is clear that $\{e_{n!}\}_{n\in\N_+}$ is a null-string of $\phi$ and so $s_0(\phi)=\infty$. Therefore, $s_0(G)=\infty$. 

\smallskip
(c)$\Rightarrow$(a) is clear. 
\end{proof}

If a reduced abelian $p$-group $G$ is finite, then obviously $ns(G)=0$. Together with Lemma \ref{torsion-div}(b) and the reduction to reduced abelian $p$-groups provided by Corollary \ref{red-to-p}(b) and Lemma \ref{ridotti}(b), this trivial observation gives the next corollary showing a sufficient condition for a torsion abelian group to be in $\Sns$. Example \ref{hopf-s0=infty} will show that this condition is not necessary.

\begin{corollary}\label{B-2}
Let $G$ be a torsion abelian group. If $r_p(G)$ is finite for every prime $p$, then $ns(G)=0$.
\end{corollary}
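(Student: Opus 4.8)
The plan is to reduce the general torsion case to reduced abelian $p$-groups, where the finiteness of the $p$-ranks forces each $p$-component to be finite, and then invoke the trivial vanishing of $ns$ on finite groups. Concretely, since $G$ is torsion it decomposes as $G=\bigoplus_p t_p(G)$ into fully invariant $p$-components, so by Corollary \ref{red-to-p}(b) it suffices to prove $ns(t_p(G))=0$ for each prime $p$. Fixing a prime $p$, I would split $t_p(G)$ as $d(t_p(G))\oplus R$ with $R$ reduced, as in Lemma \ref{ridotti}, which tells us that $ns(t_p(G))=ns(d(t_p(G)))+ns(R)$; hence it is enough to show that both summands have non-singular string number zero.

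For the divisible summand, the hypothesis that $r_p(G)$ is finite guarantees in particular that $r_p(d(t_p(G)))$ is finite, and so Lemma \ref{torsion-div}(b) gives $ns(d(t_p(G)))=0$ immediately. For the reduced summand $R$, the key observation is that $r_p(R)$ is also finite (being bounded by $r_p(G)$), and a reduced abelian $p$-group with finite $p$-rank is finite: indeed its socle $R[p]$ is finite, and for a reduced $p$-group finiteness of the socle forces finiteness of the whole group. Since $R$ is finite, no infinite sequence of distinct elements can fit inside it, so $R$ admits no strings at all, whence trivially $ns(R)=0$.

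Combining these two facts through Lemma \ref{ridotti}(b) yields $ns(t_p(G))=0$ for every $p$, and then Corollary \ref{red-to-p}(b) delivers $ns(G)=0$, completing the argument. I expect the only genuinely non-routine point to be the implication ``reduced $p$-group with finite $p$-rank $\Rightarrow$ finite''; everything else is a bookkeeping application of the reduction lemmas already established. In fact the paper seems to intend an even shorter route: having already reduced to reduced $p$-groups via Corollary \ref{red-to-p}(b) and Lemma \ref{ridotti}(b), and having noted that a reduced $p$-group with finite $p$-rank is finite (so that $ns$ vanishes trivially on it) together with Lemma \ref{torsion-div}(b) for the divisible part, the corollary follows at once, which is precisely the ``trivial observation'' flagged in the surrounding text.
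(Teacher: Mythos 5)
Your proof is correct and follows essentially the same route as the paper: reduction to $p$-components via Corollary \ref{red-to-p}(b), splitting off the divisible part via Lemma \ref{ridotti}(b) and handling it with Lemma \ref{torsion-div}(b), and then the observation that the reduced summand, having finite $p$-rank, is finite and hence trivially has $ns=0$ --- which is exactly the ``trivial observation'' the paper invokes. The one step you assert without proof, that a reduced abelian $p$-group with finite socle is finite, is a standard fact (a basic subgroup has finite socle, hence is finite and bounded, hence a direct summand whose complement is divisible and thus zero by reducedness), and the paper implicitly relies on it in the same way.
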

The problem of the characterization of the torsion abelian groups $G$ with $ns(G)=0$ is open, as stated by Problem \ref{Ques1}. Nevertheless, we have some reductions, as the following

\begin{lemma}\label{UK-finite}
Let $p$ be a prime and let $G$ be a reduced abelian $p$-group. If $ns(G)=0$, then the finite Ulm-Kaplansky invariants of $G$ are finite. 
\end{lemma}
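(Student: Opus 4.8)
The plan is to argue by contraposition: assuming that some Ulm--Kaplansky invariant $f_G(n_0)$ at a finite ordinal $n_0\in\N$ is infinite, I will exhibit an endomorphism $\phi$ of $G$ with $ns(\phi)=\infty$, so that $ns(G)=\infty$. The first step is to translate the hypothesis into a statement about a basic subgroup $B$ of $G$. Since $G$ is a reduced abelian $p$-group, $B$ is a direct sum of cyclic groups, and for each finite $n$ the number of summands of $B$ isomorphic to $\Z(p^{n+1})$ is exactly the finite Ulm--Kaplansky invariant $f_G(n)$ (see \cite{F}). As $f_G(n_0)$ is infinite, $B$ splits as $B=B''\oplus B'''$ with $B''\cong\Z(p^{n_0+1})^{(\N)}$.

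Next, set $K=\Z(p^{n_0+1})$ and reindex the countably many free generators $\{g_j\}$ of $B''$ by $\Z$ (legitimate since $|\N|=|\Z|$), thereby identifying $B''$ with $K^{(\Z)}$; let $\sigma=\overline\beta_K$ be the two-sided Bernoulli shift on it, so $\sigma(g_j)=g_{j+1}$. Then $ns(\sigma)=\infty$: either invoke Table \ref{bernoulli}, or observe directly that the sequence $x_m=g_{-m}$ satisfies $\sigma(x_m)=x_{m-1}$ and is a string whose initial term $g_0$ has the infinite forward orbit $\{\sigma^i(g_0)\}_{i\in\N}=\{g_i\}_{i\in\N}$, hence is non-singular by Lemma \ref{sing}. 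I then define $\phi_B\in\End(B)$ by $\phi_B\restriction_{B''}=\sigma$ and $\phi_B\restriction_{B'''}=0$, and proceed to transport it to $G$.

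The crux, and the step I expect to be the main obstacle, is precisely this transport, because the two-sided shift is exponent-preserving and hence not small in the sense required by Pierce's extension theorem. The key observation that rescues the argument is that $\phi_B$ itself \emph{is} small, thanks to the boundedness of $B''$: for any $k\in\N$ one may take $N=n_0+1$, since then $p^{N}B''=0$ forces $p^{N}B=p^{N}B'''\subseteq B'''$, whence $(p^{N}B)[p^k]\subseteq B'''$ and $\phi_B((p^{N}B)[p^k])=0$. As $B$ is pure in $G$ and $\phi_B$ is small, \cite[Theorem 4.4]{P} produces an endomorphism $\phi$ of $G$ extending $\phi_B$. Finally, $B''$ is $\phi$-invariant with $\phi\restriction_{B''}=\sigma$, so monotonicity under taking invariant subgroups (Fact \ref{ent*}(c)) yields $ns(\phi)\geq ns(\sigma)=\infty$, hence $ns(G)=\infty$, contradicting $ns(G)=0$. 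The delicate point throughout is this smallness remark: it is what allows a shift to be realized on a bounded layer of $G$ without violating the hypotheses of the extension theorem, turning an infinite finite Ulm invariant into a genuine non-singular string.
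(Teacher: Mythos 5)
Your proof is correct, but it follows a genuinely different route from the paper's. The paper's argument is very short: if some finite Ulm--Kaplansky invariant $\alpha_n$ of $G$ is infinite, then $G$ itself has a \emph{direct summand} $H\cong\Z(p^n)^{(\N)}$ (the corresponding bounded piece of a basic subgroup is pure and bounded, hence a summand); since $ns(H)=\infty$ by Example \ref{Z}(d), Lemma \ref{LemmaY}(b) immediately gives $ns(G)=\infty$. You never use that this bounded piece is a summand of $G$: instead you realize the two-sided shift on $B''\leq B$, extend it by zero to an endomorphism $\phi_B$ of the basic subgroup $B$, observe that $\phi_B$ is small because it annihilates $p^{n_0+1}B$, and then invoke Pierce's extension theorem together with monotonicity under invariant subgroups (Fact \ref{ent*}(c)). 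This is precisely the technique the paper employs for Theorem \ref{ThB} (the null-string case), transplanted here to manufacture a non-singular string; it is heavier machinery, but it is sound, and it has the merit of exhibiting the string explicitly inside $G$ rather than quoting Example \ref{Z}(d). Two remarks. First, your motivating claim that the two-sided shift is ``exponent-preserving and hence not small'' is mistaken: on a \emph{bounded} group such as $B''$ every endomorphism is vacuously small, so the obstacle you feared never existed --- which is exactly why your verification that $\phi_B$ is small goes through so painlessly (any endomorphism of $B$ killing $p^{n_0+1}B$ is small). This slip is purely motivational and does not affect the logic. Second, for this particular $\phi_B$ you do not even need the full strength of Pierce's theorem: since $\phi_B$ factors through $B/p^{n_0+1}B$ and the natural map $B/p^{n_0+1}B\to G/p^{n_0+1}G$ is an isomorphism (because $B$ is basic, $G=B+p^{n_0+1}G$ and $B\cap p^{n_0+1}G=p^{n_0+1}B$), the composite $G\to G/p^{n_0+1}G\cong B/p^{n_0+1}B\to B\hookrightarrow G$ extends $\phi_B$ by hand.
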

\begin{proof}
Assume that some of the finite Ulm-Kaplansky invariants $\alpha_n$ of $G$ is infinite. Then $G$ has a direct summand of the form $H=\Z(p^n)^{(\N)}$. Since $ns(H)=\infty$ by Example \ref{Z}(d), Lemma \ref{LemmaY}(b) gives $ns(G)=\infty$.
\end{proof}

Thanks to this lemma it is possible to see that ``large'' torsion abelian group are not in $\Sns$:

\begin{theorem}\label{||<c}
Let $G$ be a torsion abelian group. If $ns(G)=0$, then $|G|\leq\cont$.
\end{theorem}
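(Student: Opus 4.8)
The plan is to reduce, in three clean steps, to a purely cardinality-theoretic estimate on a single reduced $p$-group, and then to isolate the transfinite part as the genuine difficulty. First I would use the torsion reductions already available: since $G$ is torsion, $G=\bigoplus_p t_p(G)$, and by Corollary \ref{red-to-p}(b) each $t_p(G)$ again satisfies $ns(t_p(G))=0$. Splitting $t_p(G)=d(t_p(G))\oplus R_p$ as in Lemma \ref{ridotti}(b), Lemma \ref{torsion-div}(b) forces $r_p(d(t_p(G)))$ to be finite, so the divisible part is countable, while $ns(R_p)=0$ for the reduced summand. As there are only countably many primes and $\aleph_0\cdot\cont=\cont$, it suffices to prove $|R|\leq\cont$ for every reduced abelian $p$-group $R$ with $ns(R)=0$.

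Next I would pass from $|R|$ to the $p$-rank by a socle estimate. For any $p$-group, multiplication by $p^{k}$ induces an embedding $R[p^{k+1}]/R[p^{k}]\hookrightarrow R[p]$, whence $|R[p^{k}]|\leq|R[p]|^{k}$; since $R=\bigcup_{k\in\N}R[p^{k}]$, this gives $|R|\leq\aleph_0\cdot|R[p]|$, so that $|R|\leq\max\{\aleph_0,\,r_p(R)\}$. Thus the whole statement reduces to showing that $\dim_{\mathbb{F}_p}R[p]=r_p(R)\leq\cont$.

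I would then handle the separable quotient using the preceding lemma. By Lemma \ref{UK-finite} all finite Ulm--Kaplansky invariants of $R$ are finite, so a basic subgroup $B$ of $R$ is a direct sum of cyclic groups with only finitely many summands of each order, hence countable. The quotient $\bar R=R/p^{\omega}R$ is separable with the same basic subgroup $B$, and therefore embeds into the torsion-completion $\widehat B$ of $B$; since $|\widehat B|=\cont$, we get $\dim_{\mathbb{F}_p}\bar R[p]\leq\cont$. As $R[p]/(p^{\omega}R)[p]$ embeds into $\bar R[p]$, this yields $r_p(R)\leq\cont+\dim_{\mathbb{F}_p}(p^{\omega}R)[p]$, and the only thing left to control is the first Ulm subgroup $p^{\omega}R$.

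The crux, and what I expect to be the main obstacle, is bounding $\dim_{\mathbb{F}_p}(p^{\omega}R)[p]$. The natural strategy is to show that $ns$ descends to the fully invariant subgroup $p^{\omega}R$, i.e.\ that $ns(p^{\omega}R)=0$, and then to iterate the socle-and-completion estimate of the previous paragraph along the transfinite Ulm filtration $R\supseteq p^{\omega}R\supseteq p^{\omega 2}R\supseteq\cdots$, each successive factor being separable with countable basic subgroup and hence of $p$-rank $\leq\cont$. The delicate point is the descent of $ns=0$ to $p^{\omega}R$: an endomorphism of $p^{\omega}R$ need not extend to $R$, so one cannot invoke Fact \ref{ent*}(c) directly, and instead one must manufacture, from a hypothetical non-singular string inside $p^{\omega}R$, an endomorphism of $R$ carrying a non-singular string --- for this the purity of $R$ together with the small/pure-extension technique of Pierce used in Theorem \ref{ThB} is the natural instrument. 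In fact I expect this to force $p^{\omega}R=0$ outright (so that $R$ is separable and the bound $|R|\leq|\widehat B|=\cont$ is immediate), the uncountable groups in $\Sns\cap\mathfrak T$ being of torsion-complete type; establishing that a nonzero $p^{\omega}R$ produces a non-singular string is the single hard step, after which both the induction and the cardinal arithmetic $\sum\leq\cont$ are routine.
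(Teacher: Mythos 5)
Your reductions are sound and coincide with the paper's opening moves: the passage to $p$-components via Corollary \ref{red-to-p}(b), the elimination of the divisible part via Lemmas \ref{ridotti}(b) and \ref{torsion-div}(b), and the use of Lemma \ref{UK-finite} to make a basic subgroup $B$ countable are exactly what the paper does, and your socle estimate together with the bound $|R/p^{\omega}R|\leq|\widehat{B}|=\cont$ for the separable quotient is correct. The problem is that what you yourself call ``the single hard step'' --- controlling $p^{\omega}R$ --- is left unproven, and both routes you sketch for it fail. Your expectation that $ns(R)=0$ forces $p^{\omega}R=0$ is contradicted by the paper itself: Theorem \ref{th1} (coming from \cite{DGSZ} via Corner's realization theorem) produces, for every ordinal $\lambda<\omega^2$, reduced abelian $p$-groups of length $\lambda$ with non-singular string number zero, and any such group of length greater than $\omega$ has $p^{\omega}R\neq0$; your parenthetical guess that the uncountable groups in $\Sns\cap\mathfrak T$ are of torsion-complete type also runs against Theorem \ref{tcomp}, which says that unbounded torsion-complete $p$-groups have $ns=\infty$. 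The alternative route, descending $ns=0$ to the fully invariant subgroup $p^{\omega}R$, is not available by the means you indicate: Pierce's extension theorem \cite[Theorem 4.4]{P} extends small endomorphisms from \emph{pure} subgroups, whereas $p^{\omega}R$ is never pure in $R$ unless it vanishes (purity would force $p^{\omega}R$ to be $p$-divisible, hence divisible, contradicting reducedness), so there is no such device for manufacturing an endomorphism of $R$ out of one of $p^{\omega}R$. And even granting the descent, your transfinite iteration would still need a separate argument bounding the length of $R$: ``at most $\cont$-many Ulm factors, each of size at most $\cont$'' is part of what must be proved, not something that comes for free from the filtration.

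What actually closes the argument in the paper is a classical cardinality theorem that makes the entire Ulm analysis unnecessary: by \cite[Theorem 34.3]{F}, an arbitrary reduced $p$-group $G$ with basic subgroup $B$ satisfies $|G|\leq|B|^{\aleph_0}$, with no separability assumption --- the bound applies to the whole group, Ulm subgroups included. So once your (correct) reductions produce a reduced $p$-group whose finite Ulm--Kaplansky invariants are finite, hence whose basic subgroup is countable, the conclusion $|G|\leq\aleph_0^{\aleph_0}=\cont$ is immediate. In short: your scaffolding matches the paper's, but the load-bearing step is missing, the conjecture you propose for filling it is false, and the correct filler is Fuchs' theorem rather than any separability or descent statement.
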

\begin{proof}
We can assume without loss of generality that $G$ is a $p$-group, that is, $G=t_p(G)$, for some prime $p$. Indeed, $ns(t_p(G))=0$ for every prime $p$ by Corollary \ref{red-to-p}(b) and $|t_p(G)|\leq\cont$ for every prime $p$ implies $|G|\leq\cont$.
Now the hypothesis $ns(G)=0$ implies that the maximum divisible subgroup $d(G)$ of $G$ has $ns(d(G))=0$ as well, in view of Lemma \ref{LemmaY}(b). By Theorem \ref{ThB} $r_p(d(G))$ is finite, so in particular $|d(G)|=\aleph_0$. Thus we can assume that $G$ is reduced.
By Lemma \ref{UK-finite}, the finite Ulm-Kaplansky invariants $\alpha_n$ of $G$ are finite. Now \cite[Theorem 34.3]{F} gives $|G| \leq \cont$.
\end{proof} 

Let $G$ be a torsion abelian group, $\f\in\End(G)$ and let $S$ be a non-singular string of $\f$. Clearly the $\f$-trajectory $T(\f,\langle x\rangle)$ is infinite for every $x\in S$, and so $\ent(\f)>0$ by \cite[Proposition 2.4]{DGSZ}.
In particular,
\begin{equation}\label{ent=0->ns=0}
\ent(G)=0\Longrightarrow ns(G)=0.
\end{equation}

Looking at the implication in \eqref{ent=0->ns=0}, the following natural question was suggested by the referee and remains open. 

\begin{problem}
Does there exist an abelian $p$-group $G$ with $ns(G)=0$ but $\ent(G)=\infty$?
\end{problem}

Equation \eqref{ent=0->ns=0} is equivalent to say that the class of torsion abelian groups with algebraic entropy zero is contained in $\Sns \cap \mathfrak T$.
So the results from \cite[Section 5]{DGSZ} on torsion abelian groups with algebraic entropy zero hold also for torsion abelian groups with non-singular string number zero. In particular we have the following theorem, which is the counterpart of \cite[Theorem 5.18]{DGSZ}, based on a Corner's realization theorem from \cite{C1}.

\begin{theorem}\label{th1}
Given an ordinal $\lambda<\omega^2$, there exists a family of $2^{2^{\omega}}$ abelian $p$-groups of length $\lambda$ and with non-singular string number zero. 
\end{theorem}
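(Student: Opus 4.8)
The plan is to reduce the statement entirely to the algebraic-entropy setting by means of the implication \eqref{ent=0->ns=0} and then to import the corresponding construction from \cite{DGSZ}. Indeed, since every torsion abelian group $G$ with $\ent(G)=0$ satisfies $ns(G)=0$ by \eqref{ent=0->ns=0}, it suffices to exhibit, for each ordinal $\lambda<\omega^2$, a family of $2^{2^{\omega}}$ abelian $p$-groups of length $\lambda$ all having algebraic entropy zero; any such family automatically consists of groups with non-singular string number zero, which is exactly what we want.

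First I would invoke \cite[Theorem 5.18]{DGSZ}, which provides precisely such a family: for every $\lambda<\omega^2$ it yields $2^{2^{\omega}}$ pairwise non-isomorphic abelian $p$-groups of length $\lambda$ with $\ent(G)=0$. That construction rests on a Corner realization theorem from \cite{C1}, used to realize suitably ``small'' endomorphism rings on separable $p$-groups of prescribed Ulm length. The relevant point is that, when the endomorphism ring is so restricted, no endomorphism can act as a right Bernoulli shift on any subgroup of the $p$-socle, whence $\ent(\phi)=0$ for every $\phi\in\End(G)$ by the criterion recalled in Section \ref{ent-sec}. Note also that $2^{2^{\omega}}$ is the largest possible number of isomorphism classes of groups of size continuum, so this family is as large as one could hope for.

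The only genuinely new ingredient is thus the implication \eqref{ent=0->ns=0}; the rest of the family is taken unchanged from \cite[Theorem 5.18]{DGSZ}, and the proof reduces to a single short deduction. Were one instead to reprove the result from scratch rather than cite \cite{DGSZ}, the main obstacle would be the simultaneous control, inside the Corner-type realization, of three features at once: the pairwise non-isomorphism of the $2^{2^{\omega}}$ groups, the exact prescribed Ulm length $\lambda<\omega^2$, and the smallness of the endomorphism rings that forces entropy (hence, via \eqref{ent=0->ns=0}, non-singular string number) zero. Since that technical core is already available in \cite{DGSZ}, the appeal to \eqref{ent=0->ns=0} lets us bypass it entirely.
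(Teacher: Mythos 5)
Your proposal is correct and follows exactly the paper's own route: the paper obtains Theorem \ref{th1} precisely by combining the implication \eqref{ent=0->ns=0} (valid for torsion, hence for $p$-groups) with the family of $2^{2^{\omega}}$ abelian $p$-groups of length $\lambda<\omega^2$ and algebraic entropy zero provided by \cite[Theorem 5.18]{DGSZ}, which rests on Corner's realization theorem from \cite{C1}. Nothing further is needed.
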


We discuss now the case when the non-singular string number is infine, starting from the following example.

\begin{example}\label{p-basic}
For a prime $p$, the standard $p$-basic subgroup $B_p=\bigoplus_{n\in\N_+}\Z(p^n)$ has $ns(B_p)=\infty$.
 
To show this, we construct an endomorphism $\phi$ of $B_p$ admitting a non-singular string. Let $e_n=(0,\ldots,0,1_{\Z(p^n)},0,\ldots)$ be the $n$-th standard generator of $B_p$, for every $n\in\N_+$. Define $\phi$ on the generators of $B_p$ letting, for every $n\in\N_+$,
$$e_{2(n+1)}\mapsto e_{2n},\ \ \ \ e_2\mapsto e_1,\ \ \ \ e_{2n-1}\mapsto p^2\cdot e_{2n+1}.$$
Then clearly $S=\{e_{2n}\}_{n\in\N_+}$ is a non-singular string of $\f$. 
\end{example}

This example can be generalized to any abelian group of the form $\bigoplus_{n\in N}\Z(p^n)$, where $N$ is an infinite subset of $\N$, and so also to abelian groups of the form $\bigoplus_{n\in\N_+}\Z(p^n)^{(\alpha_n)}$, where $\alpha_n>0$ for infinitely many $\alpha_n$, thanks to Lemma \ref{LemmaY}(b).

\smallskip
An abelian $p$-group is said to be \emph{torsion-complete} if it is the torsion part of the $p$-adic completion of a direct sum of cyclic abelian $p$-groups. Torsion-complete abelian $p$-groups are necessarily separable. Even if an abelian $p$-group $G$ has an unbounded $p$-basic subgroup $B_p$, it is not always possible to extend the endomorphism of $B_p$ given in Example \ref{p-basic} to the whole group $G$. But, if $G$ is torsion-complete, then every endomorphism of $B_p$ extends to $G$ (see \cite[Section 68]{F}), and so Example \ref{p-basic} permits to prove the following 

\begin{theorem}\label{tcomp}
Let $p$ be a prime, and let $G$ be an unbounded torsion-complete abelian $p$-group. Then $s_0(G)=ns(G)=s(G)=\infty$. 
\end{theorem}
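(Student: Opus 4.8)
The plan is to reduce the claim, $s_0(G)=ns(G)=s(G)=\infty$, to producing a single endomorphism with a non-singular string, by exploiting the relations among the three string numbers together with the structural facts already established. Since $G$ is unbounded and separable (torsion-complete groups are separable), its standard $p$-basic subgroup $B_p=\bigoplus_{n\in\N_+}\Z(p^n)$ is pure, dense in the $p$-adic topology, and unbounded. The key structural input is the one highlighted in the paragraph preceding the statement: because $G$ is torsion-complete, \emph{every} endomorphism of $B_p$ extends to an endomorphism of $G$ (see \cite[Section 68]{F}). So I would first take the endomorphism $\phi_{B_p}$ of $B_p$ constructed explicitly in Example \ref{p-basic}, for which $S=\{e_{2n}\}_{n\in\N_+}$ is a non-singular string.

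Next I would extend $\phi_{B_p}$ to an endomorphism $\phi$ of $G$. The point is that extending does not disturb the witnessing string: the elements $e_{2n}$ lie in $B_p\subseteq G$, and the relations $\phi(e_{2(n+1)})=e_{2n}$ and $\phi(e_2)=e_1$ are preserved because $\phi$ restricts to $\phi_{B_p}$ on $B_p$. Hence $S$ remains a pseudostring of $\phi$ with pairwise distinct elements, i.e.\ a string, and it is still non-singular because the trajectory $\{\phi^k(e_2):k\in\N\}$ is already infinite inside $B_p$ (the chain $e_2\mapsto e_1$ together with the images $p^2 e_{2n+1}$ prevents quasi-periodicity, exactly as argued in Example \ref{p-basic}). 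This immediately gives $ns(\phi)=\infty$, hence $ns(G)=\infty$, and therefore $s(G)=\infty$ by \eqref{s=ns+s0} since $s=ns+s_0$.

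It remains to obtain $s_0(G)=\infty$. Here I would argue separately, because a non-singular string need not be a null string. The cleanest route is to build a \emph{second} endomorphism $\psi$ of $G$ admitting a null string. On the unbounded basic subgroup one can define, analogously to the construction in the proof of Theorem \ref{ThB}, a small endomorphism $\psi_{B_p}$ of $B_p$ for which some sequence like $\{e_{k_n}\}$ is a null string (sending one generator to $0$ and shifting the others along an increasing sequence of exponents); small endomorphisms of $B_p$ extend to $G$ by the same torsion-complete extension property (or by \cite[Theorem 4.4]{P}, since $B_p$ is pure in $G$ and $\psi_{B_p}$ is small). Then $\psi$ has a null string in $G$, giving $s_0(\psi)=\infty$ and hence $s_0(G)=\infty$.

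The main obstacle I anticipate is the simultaneous extension/non-singularity bookkeeping: one must confirm that the explicit $\phi_{B_p}$ of Example \ref{p-basic} is small (so that the extension theorems apply cleanly) and that after extension the witnessing sequence retains \emph{all} three of the needed properties—pairwise distinctness, the pseudostring equations, and non-singularity—none of which can be spoiled by how $\phi$ acts off $B_p$, precisely because the string lives inside the fully controlled summand $B_p$. Once the extension lemma is invoked correctly, the rest is a direct transcription of Example \ref{p-basic} together with a parallel null-string construction, and the three conclusions follow from \eqref{s=ns+s0} and Lemma \ref{LemmaY}.
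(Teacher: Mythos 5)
Your proposal is correct, and its core step --- extending an endomorphism of a basic subgroup with a non-singular string to all of $G$ via the torsion-complete extension property, and noting that the string survives because it lives inside the subgroup on which the extension restricts to the original map --- is exactly the paper's argument for $ns(G)=\infty$. You differ on the other two values: the paper gets $s_0(G)=s(G)=\infty$ in one line by citing Theorem \ref{ThB} ($G$ is separable, hence reduced, and unbounded, hence infinite), whereas you re-run the null-string construction (a small endomorphism of the basic subgroup, extended by Pierce's theorem). Your route works --- it simply inlines the proof of Theorem \ref{ThB} instead of citing it --- and your derivation of $s(G)=\infty$ from $ns(G)=\infty$ via \eqref{s=ns+s0} is equally valid; the citation is just shorter.

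Two clarifications, neither of which breaks your argument. First, your anticipated obstacle about smallness points the wrong way: the endomorphism of Example \ref{p-basic} is \emph{not} small (for instance, the elements $p^{2m-3}e_{2m}$ have order $p^3$, lie in $p^nB_p$ whenever $2m-3\geq n$, yet have non-zero image $p^{2m-3}e_{2m-2}$), and no smallness is needed for it --- the whole point of the torsion-completeness hypothesis is that \emph{every} endomorphism of a basic subgroup extends to $G$ (\cite[Section 68]{F}); Pierce's theorem, which does require smallness, is only needed when extending into general separable groups, as in the proof of Theorem \ref{ThB} and in your null-string step. Second, the basic subgroup of $G$ need not be the \emph{standard} one $\bigoplus_{n\in\N_+}\Z(p^n)$; it is some unbounded direct sum of cyclic $p$-groups. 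So you should apply the generalization of Example \ref{p-basic} recorded immediately after it (any unbounded direct sum $\bigoplus_{n}\Z(p^n)^{(\alpha_n)}$ carries such an endomorphism, extended by zero on a complement, using Lemma \ref{LemmaY}(b)) to the \emph{actual} basic subgroup of $G$, since that is the subgroup whose endomorphisms extend. Both are one-line repairs.
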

\begin{proof}
By Theorem \ref{ThB}, $s_0(G)=s(G)=\infty$.
Since $G$ is separable, in particular $G$ is reduced. Then $G$ contains an unbounded $p$-basic subgroup $B_p$. Moreover, $ns(B_p)=\infty$ by Example \ref{p-basic} and each endomorphism of $B_p$ extends to an endomorphism of $G$, since $G$ is torsion-complete. Hence $ns(G)=\infty$.
\end{proof}

In \cite[Section 4]{DGSZ} torsion abelian groups with infinite algebraic entropy are produced. To exhibit these groups the authors show that they have a direct summand which is an infinite direct sum of cyclic $p$-groups. So the same groups have also infinite string numbers in view of Example \ref{Z}(d) and Lemma \ref{LemmaY}. In particular, the following is the counterpart of \cite[Theorem 4.5]{DGSZ}.

\begin{theorem}\label{th2}
The reduced abelian $p$-groups which are either totally projective or $p^{\omega+1}$-projective have infinite string numbers.
\end{theorem}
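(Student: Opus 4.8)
The plan is to reduce the statement to the presence of a suitable direct summand and then invoke the additivity results already established. Throughout I assume $G$ is infinite, since the finite case is vacuous: finite abelian $p$-groups have all three string numbers zero, and both classes named in the statement contain finite groups, so the assertion is really about the infinite members of these classes.

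First I would dispose of $s(G)$ and $s_0(G)$ with no reference to the structural hypotheses at all. Since $G$ is infinite and reduced, Theorem \ref{ThB} immediately yields $s_0(G)=\infty$, and hence $s(G)=\infty$ by \eqref{s=ns+s0}. Thus the only string number for which the totally projective or $p^{\omega+1}$-projective hypothesis is genuinely needed is $ns(G)$: indeed $ns$ can vanish on infinite reduced $p$-groups (compare Theorem \ref{th1} and Corollary \ref{B-2}), so some structural input is unavoidable precisely here.

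For $ns(G)=\infty$ I would import the structural fact established in the proof of \cite[Theorem 4.5]{DGSZ}, namely that a reduced, infinite totally projective (respectively $p^{\omega+1}$-projective) abelian $p$-group $G$ has a direct summand $D$ isomorphic to an infinite direct sum of cyclic $p$-groups. Writing $G=D\oplus G'$, I would then check $ns(D)=\infty$ by a short dichotomy on the orders appearing in $D$. If some order $p^k$ is shared by infinitely many of the cyclic summands of $D$, then $\Z(p^k)^{(\N)}$ is a direct summand of $D$ and Example \ref{Z}(d) applies; otherwise each order occurs only finitely often, so $D$ is unbounded and contains a summand of the form $\bigoplus_{n\in N}\Z(p^{n})$ with $N\subseteq\N$ infinite, to which the generalization of Example \ref{p-basic} applies. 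In either case $ns(D)=\infty$, whence $ns(G)=\infty$ by Lemma \ref{LemmaY}(b). Together with the previous paragraph this gives $s(G)=ns(G)=s_0(G)=\infty$.

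The genuine content of the argument sits entirely in the imported structural fact --- that the groups in these two classes split off an infinite direct sum of cyclics --- which rests on the structure theory of totally projective and $p^{\omega+1}$-projective $p$-groups and is exactly what \cite[Section 4]{DGSZ} supplies; this is the only real obstacle, and it is bypassed by citation. Granting it, the present statement is a transcription of the algebraic-entropy result of \cite{DGSZ} into the language of string numbers, and no further difficulty arises: the passage from ``$G$ has such a summand'' to ``the string numbers of $G$ are infinite'' is handled uniformly by Example \ref{Z}(d) (or the extension of Example \ref{p-basic}) together with the additivity of Lemma \ref{LemmaY}.
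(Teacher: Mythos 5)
Your proposal is correct and takes essentially the same route as the paper, which likewise quotes from \cite[Section 4]{DGSZ} the fact that these groups split off a direct summand that is an infinite direct sum of cyclic $p$-groups and then concludes by Example \ref{Z}(d) and Lemma \ref{LemmaY}. If anything, you are more careful than the paper's one-paragraph argument: your use of Theorem \ref{ThB} for $s$ and $s_0$, and your dichotomy for $ns$ (a repeated order gives a summand $\Z(p^k)^{(\N)}$, while unbounded orders give a summand $\bigoplus_{n\in N}\Z(p^n)$ handled by the generalization of Example \ref{p-basic}) cover the case that the paper's citation of Example \ref{Z}(d) alone does not literally reach.
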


\section{The torsion-free case}\label{tf-sec}

In this section we study Problem \ref{pb} in the case of torsion-free abelian groups, starting from the following

\begin{lemma}\label{s0(fin-rank)=0}
Let $G$ be a torsion-free abelian group of finite rank. Then $s_0(G)=0$.
\end{lemma}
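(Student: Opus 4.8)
The plan is to reduce to surjective endomorphisms and then exploit the fact that torsion-free abelian groups of finite rank are Hopfian. Concretely, fix $\phi\in\End(G)$. By Lemma \ref{s-c} we have $s_0(\phi)=s_0(\phi\restriction_{\sc\phi})$, so it suffices to show $s_0(\psi)=0$, where $\psi=\phi\restriction_{\sc\phi}$ is the endomorphism induced on the surjective core $\sc\phi$. By construction $\psi$ is surjective, and $\sc\phi$, being a subgroup of $G$, is again torsion-free of finite rank. Thus everything reduces to showing that a surjective endomorphism of a torsion-free group of finite rank is injective, after which Proposition \ref{AAA}(c) finishes the job.

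The key step is therefore to prove that a surjective endomorphism $\psi$ of a torsion-free abelian group $H$ of finite rank $n$ is automatically injective. I would pass to the divisible hull $D(H)$, which is a $\Q$-vector space of dimension $n$, and extend $\psi$ to a $\Q$-linear map $\widehat\psi\colon D(H)\to D(H)$. Since $\widehat\psi(D(H))$ is a divisible subgroup containing $\psi(H)=H$, and $D(H)$ is the minimal divisible group containing $H$, the map $\widehat\psi$ is surjective; being a surjective endomorphism of a finite-dimensional $\Q$-vector space, it is then bijective, hence injective. As $H\hookrightarrow D(H)$ and $\psi=\widehat\psi\restriction_H$, the endomorphism $\psi$ is injective.

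Once injectivity of $\psi$ is established, Proposition \ref{AAA}(c) applies directly: $\ker(\phi\restriction_{\sc\phi})=\ker\psi=0$ gives $s_0(\psi)=0$, and hence $s_0(\phi)=0$. Since $\phi$ was arbitrary, this yields $s_0(G)=0$. The only genuine content is the Hopfian property, and the passage to the divisible hull together with the dimension count is precisely the place where the finiteness of the rank is used; it is exactly the step that breaks down in infinite rank, in accordance with Corollary \ref{free}(c).
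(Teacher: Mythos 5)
Your proof is correct and follows essentially the same route as the paper: reduce to a surjective endomorphism via Lemma \ref{s-c} (noting the surjective core is again torsion-free of finite rank), observe that surjectivity forces injectivity, and conclude with Proposition \ref{AAA}(c). The only difference is that you spell out the Hopfian property via the divisible hull and a dimension count, whereas the paper takes ``surjective implies injective'' for torsion-free groups of finite rank as known; your justification of that step is valid.
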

\begin{proof}
Let $\phi\in\End(G)$. Since every subgroup of $G$ is torsion-free and of finite rank, by Lemma \ref{s-c} we can assume without loss of generality that $\phi$ is surjective. Then $\phi$ is injective and hence $s_0(\phi)=0$ by Proposition \ref{AAA}(c).
\end{proof}

The following theorem characterizes completely the divisible abelian groups in $\mathfrak S$, $\Sns$ and $\S0$.

\begin{theorem}\label{div-th}
Let $D$ be a divisible abelian group. Then:
\begin{itemize}
\item[(a)] $s(D)=0$ if and only if $D=0$;
\item[(b)] $ns(D)=0$ if and only if $D=t(D)$ and $r_p(D)$ is finite for every prime $p$;
\item[(c)] $s_0(D)=0$ if and only if $t(D)=0$ and $r_0(D)$ is finite.
\end{itemize}
\end{theorem}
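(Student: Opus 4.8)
The plan is to reduce everything to the canonical structure of divisible abelian groups together with the summand behavior recorded in Lemma \ref{LemmaY}. Recall that a divisible abelian group splits as $D\cong t(D)\oplus D/t(D)$, where $t(D)=\bigoplus_p t_p(D)$ with each $p$-component $t_p(D)\cong\Z(p^\infty)^{(r_p(D))}$ divisible and fully invariant, while $D/t(D)$ is torsion-free divisible, hence $D/t(D)\cong\Q^{(r_0(D))}$. Once this decomposition is in place, each of the three equivalences becomes a matter of testing the two canonical summands separately and invoking the already-computed string numbers of $\Q$, of $\Z(p^\infty)^{(\alpha)}$, and of $\Q^{(\N)}$.

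For part (a) I would observe that it is an immediate restatement of Lemma \ref{div}: any non-trivial divisible $D$ has $s(D)=\infty$ (the lemma gives this via its torsion or torsion-free summand), while $s(0)=0$ trivially. For part (b), in the forward direction I would apply Lemma \ref{LemmaY}(b) to the splitting $D=t(D)\oplus D/t(D)$: if $D/t(D)\neq 0$ it is non-trivial torsion-free divisible, so $ns(D/t(D))=\infty$ by Lemma \ref{div}(b), forcing $D=t(D)$; then Corollary \ref{red-to-p}(b) reduces $ns(t(D))=0$ to $ns(t_p(D))=0$ for every $p$, and Lemma \ref{torsion-div}(b) converts each such vanishing into the finiteness of $r_p(D)$. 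The converse runs backwards through the same chain: finiteness of all $r_p(D)$ gives $ns(t_p(D))=0$ by Lemma \ref{torsion-div}(b), and Corollary \ref{red-to-p}(b) assembles these into $ns(D)=ns(t(D))=0$.

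Part (c) is entirely parallel, swapping the roles of the torsion and torsion-free parts. In the forward direction Lemma \ref{LemmaY}(c) applied to $D=t(D)\oplus D/t(D)$ shows that a non-trivial $t(D)$ would give $s_0(t(D))=\infty$ by Lemma \ref{div}(a), so $t(D)=0$ and $D\cong\Q^{(r_0(D))}$; if moreover $r_0(D)$ were infinite, then $\Q^{(\N)}$ would be a direct summand, whence $s_0(D)=\infty$ by Example \ref{Z}(d) and Lemma \ref{LemmaY}(c), so $r_0(D)$ must be finite. Conversely, $t(D)=0$ with $r_0(D)$ finite makes $D\cong\Q^{(r_0(D))}$ a torsion-free group of finite rank, so $s_0(D)=0$ by Lemma \ref{s0(fin-rank)=0}. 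I do not anticipate a genuine obstacle here: the analytic content has already been absorbed into Lemmas \ref{div}, \ref{torsion-div}, \ref{s0(fin-rank)=0} and Example \ref{Z}, so the only care required is the bookkeeping of which summand witnesses an infinite value, and making sure the reduction to $p$-components via Corollary \ref{red-to-p}(b) is used rather than a naive summand argument, since $t(D)$ may involve infinitely many primes.
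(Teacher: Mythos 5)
Your proposal is correct and follows essentially the same route as the paper's own proof: the splitting $D\cong t(D)\oplus D/t(D)$, part (a) from Lemma \ref{div}, part (b) via Lemma \ref{LemmaY}(b), Corollary \ref{red-to-p}(b) and Lemma \ref{torsion-div}(b), and part (c) via Lemma \ref{div}(a), Example \ref{Z}(d) with Lemma \ref{LemmaY}(c), and Lemma \ref{s0(fin-rank)=0}. The only difference is cosmetic: you make explicit the use of Lemma \ref{LemmaY} in the forward directions of (b) and (c), where the paper leaves that summand argument implicit.
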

\begin{proof}
Since $D$ is divisible, $D\cong t(D)\oplus D/t(D)$.

\smallskip
(a) Follows from the last statement of Lemma \ref{div}.

\smallskip
(b) If $ns(D)=0$, then $D=t(D)$ by Lemma \ref{div}(b), $ns(t_p(D))=0$ for every prime $p$ by Corollary \ref{red-to-p}(b), and
$r_p(D)$ finite for every prime $p$ by Lemma \ref{torsion-div}.
If $D=t(D)$ and $r_p(D)$ is finite for every prime $p$, then $ns(D)=0$ by Corollary \ref{red-to-p}(b) and Lemma \ref{torsion-div}.

\smallskip
(c) If $s_0(D)=0$, then $t(D)=0$ by Lemma \ref{div}(a). If $r_0(D)$ is infinite, then $D$ contains $\Q^{(\N)}$ as a direct summand; since $s_0(\Q^{(\N)})=\infty$ by Example \ref{Z}(d), hence $s_0(D)=\infty$ by Lemma \ref{LemmaY}(c).
If $t(D)=0$ and $r_0(D)=n$ is finite, $D\cong \Q^n$ and $s_0(D)=0$ by Lemma \ref{s0(fin-rank)=0}.
\end{proof}

The next proposition gives a necessary condition for a torsion-free abelian group to belong to $\Sns$.

\begin{proposition}\label{tf-pomega}
Let $G$ be a torsion-free abelian group. If $ns(G)=0$, then $p^\omega G=0$ for every prime $p$.
\end{proposition}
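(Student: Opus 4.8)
The plan is to prove the contrapositive statement: if $p^\omega G=\bigcap_{n\in\N}p^nG\neq 0$ for some prime $p$, then $ns(G)=\infty$. Concretely, I will exhibit a single non-singular string for the multiplication endomorphism $\mu_p\in\End(G)$; since the non-singular string number of any endomorphism takes only the values $0$ and $\infty$, the existence of one non-singular string already forces $ns(\mu_p)=\infty$, and hence $ns(G)=\infty$.

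First I would fix a nonzero $x\in p^\omega G$ and use torsion-freeness to build a ``divisibility chain'' for $x$. For each $n\in\N$, membership $x\in p^nG$ yields some $y_n\in G$ with $p^ny_n=x$, and since $G$ is torsion-free the map $\mu_{p^n}$ is injective, so $y_n$ is uniquely determined; put $y_0=x$. Comparing $p^ny_n=x=p^{n-1}y_{n-1}$ and cancelling $p^{n-1}$ (again by torsion-freeness) gives the chain relation $py_n=y_{n-1}$ for all $n\ge 1$, that is $\mu_p(y_n)=y_{n-1}$. Thus $S=\{y_n\}_{n\in\N}$ is a pseudostring of $\mu_p$.

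Next I would upgrade $S$ to a non-singular string, which is the only place that needs a short argument, and it again rests on torsion-freeness. If $y_n=y_m$ with $n>m$, then from $p^my_n=p^my_m=x=p^ny_n$ we get $(p^n-p^m)y_n=0$; as $p^n-p^m\neq 0$, this forces $y_n=0$ and hence $x=p^ny_n=0$, a contradiction, so the $y_n$ are pairwise distinct and $S$ is a string. Likewise the forward orbit $\{\mu_p^j(y_0)\}_{j\in\N}=\{p^jx\}_{j\in\N}$ is infinite, since $p^jx=p^kx$ with $j>k$ would give $(p^j-p^k)x=0$ and so $x=0$; by Lemma \ref{sing} this means $S$ is non-singular. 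Therefore $\mu_p$ admits a non-singular string and $ns(G)=\infty$, completing the contrapositive. I do not expect a real obstacle here: the whole argument is powered by the single fact that multiplication by a nonzero integer is injective on $G$, which simultaneously gives the well-definedness of the chain, the relation $py_n=y_{n-1}$, the distinctness of the $y_n$, and the non-singularity of $S$.
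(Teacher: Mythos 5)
Your proof is correct, and it follows the same strategic line as the paper---argue the contrapositive by producing a non-singular string of the multiplication map $\mu_p$---but the execution is genuinely different. The paper's proof is a one-liner: it identifies $p^\omega G$ with the surjective core $\sc{\mu_p}$ (valid for torsion-free $G$) and then quotes \cite[Lemma 3.23 and Corollary 3.21(b)]{DGV} to conclude $ns(\mu_p)=\infty$. You instead build the string explicitly: the divisibility chain $y_n$ with $p^ny_n=x$, the cancellation giving $py_n=y_{n-1}$, the pairwise distinctness of the $y_n$, and the infinitude of the forward orbit $\{p^jx\}_{j\in\N}$ all follow from the single fact that multiplication by a nonzero integer is injective on a torsion-free group, so your argument is self-contained and elementary where the paper defers to external machinery. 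What the citation buys the paper is brevity and consistency with its surjective-core framework (reused in Proposition \ref{AAA} and Lemma \ref{WADT}); what your version buys is independence from \cite{DGV}, and it is essentially the same technique the paper later carries out by hand in the proof of Theorem \ref{main-tilde}(b), where an analogous chain $p^ny_n=x_0$ is constructed modulo torsion. One minor streamlining: for the contrapositive you only need $ns(G)\neq 0$, which a single non-singular string already gives, so the appeal to the $\{0,\infty\}$ dichotomy is a convenience rather than a necessity.
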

\begin{proof}
Assume that $p^\omega G\neq 0$ for some prime $p$. Since $p^\omega G=\sc\mu_p$, it follows from \cite[Lemma 3.23 and Corollary 3.21(b)]{DGV} that $ns(\mu_p)=\infty$, hence $ns(G)=\infty$.
\end{proof}

This proposition implies in particular that $s(\mathbb J_p)=ns(\mathbb J_p)=\infty$.

\smallskip
The converse of Proposition \ref{tf-pomega} is clearly false in general. In fact, it suffices to take $G=\Z^2$, which is a torsion-free abelian group with $p^{\omega}G=0$ for every prime $p$, but $ns(G)=\infty$ by Example \ref{Z}(b). However, the converse implication of Proposition \ref{tf-pomega} holds for endorigid torsion-free abelian groups:

\begin{theorem}\label{endorigid}
If $G$ is an endorigid torsion-free abelian group, then $s_0(G)=0$.  In particular, $s(G)=ns(G)$, and
the following conditions are equivalent:
\begin{itemize}
\item[(a)] $s(G)=0$;
\item[(b)] $p^\omega G = 0$ for all primes $p$. 
\end{itemize}
\end{theorem}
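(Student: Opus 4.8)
The plan is to exploit that $\End(G)=\Z$, so that every endomorphism of $G$ is a multiplication $\mu_k$ for some $k\in\Z$, and to analyze the three string numbers one $\mu_k$ at a time. First I would establish $s_0(G)=0$: since $G$ is torsion-free, every $\mu_k$ with $k\neq0$ is injective, and as $\End(G)=\Z$ these are exactly the nonzero endomorphisms of $G$. Hence every nonzero endomorphism of $G$ is injective, and the remark following Proposition \ref{AAA} (a direct consequence of Proposition \ref{AAA}(c)) gives $s_0(G)=0$. From \eqref{s=ns+s0}, namely $s(G)=ns(G)+s_0(G)$, the equality $s(G)=ns(G)$ follows at once.

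It then remains to prove the equivalence of (a) and (b). The implication (a)$\Rightarrow$(b) is immediate: if $s(G)=0$ then $ns(G)=0$ by the equality just proved, and Proposition \ref{tf-pomega} yields $p^\omega G=0$ for every prime $p$. The substance of the theorem is the reverse implication (b)$\Rightarrow$(a), and since $s(G)=ns(G)$ it suffices to show $ns(G)=0$, that is, $ns(\mu_k)=0$ for every $k\in\Z$.

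The key step is the identification of the surjective core $\sc{\mu_k}$. I claim that for $k\neq0$ one has $\sc{\mu_k}=\bigcap_{n\in\N}k^nG$. The inclusion $\subseteq$ is clear, since $H=\mu_k(H)=kH$ forces $H\subseteq k^nG$ for all $n$; for $\supseteq$ I would use torsion-freeness: if $x\in\bigcap_n k^nG$, then for each $n$ the unique element $y$ with $x=ky$ lies in $k^nG$, so $y\in\bigcap_n k^nG$, whence $k\bigl(\bigcap_n k^nG\bigr)=\bigcap_n k^nG$. Next I would compute this intersection: writing the prime factorization of $k$ and using that in a torsion-free group divisibility by coprime powers combines (if $x$ is divisible by $p^m$ and by $q^m$ with $\gcd(p,q)=1$, a B\'ezout identity $sp^m+tq^m=1$ shows $x$ is divisible by $(pq)^m$), I would obtain $\bigcap_n k^nG=\bigcap_{p\mid k}p^\omega G$. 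Under hypothesis (b) each $p^\omega G=0$, so $\sc{\mu_k}=0$ whenever $|k|\geq2$, and then $ns(\mu_k)=ns(\mu_k\restriction_{\sc{\mu_k}})=0$ by Lemma \ref{s-c}. The cases $k=0$ (where $\sc{\mu_0}=0$) and $k=\pm1$ (where $\mu_{\pm1}^2=\mathrm{id}$, so every element is periodic and $Q\Per(\mu_k)=G=\sc{\mu_k}$, giving $ns(\mu_k)=0$ by Proposition \ref{AAA}(b)) are handled directly. Thus $ns(\mu_k)=0$ for all $k$, so $ns(G)=s(G)=0$.

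The main obstacle is the computation of $\sc{\mu_k}$ for composite $k$, specifically the reduction $\bigcap_n k^nG=\bigcap_{p\mid k}p^\omega G$; everything else reduces to torsion-freeness and the already-established dictionary between string numbers and the surjective core in Proposition \ref{AAA}. Once that reduction is in place, hypothesis (b) collapses every relevant surjective core to zero and the result follows.
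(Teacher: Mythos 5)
Your proposal is correct and follows essentially the same route as the paper: $s_0(G)=0$ and $s(G)=ns(G)$ from torsion-freeness of $G$ together with \eqref{s=ns+s0}, (a)$\Rightarrow$(b) from Proposition \ref{tf-pomega}, and (b)$\Rightarrow$(a) by reducing to $\f=\mu_k$, showing its surjective core vanishes, and invoking Lemma \ref{s-c}. The only difference is that the paper gets by with the easy inclusion $\sc{\mu_k}\subseteq p^\omega G$ for a single prime $p$ dividing $k$, so your exact identification $\sc{\mu_k}=\bigcap_{n\in\N}k^nG=\bigcap_{p\mid k}p^\omega G$ (the torsion-freeness and B\'ezout steps) is correct but more than is needed.
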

\begin{proof}
Since $G$ is torsion-free, the non-zero endomorphisms of $G$ are injective. Hence $s_0(G)=0$ by Proposition \ref{AAA}(c), and $s(G)=ns(G)$ by \eqref{s=ns+s0}.

\smallskip
(a)$\Rightarrow$(b) Is given by Proposition \ref{tf-pomega}.

\smallskip
(b)$\Rightarrow$(a) Let $\f\in\End(G)=\Z$. Then $\f=\mu_k$ for some $k\in\Z$ and $\sc\f\subseteq p^\omega G$ if $p$ is a prime dividing $k$. Since $p^\omega G=0$ by hypothesis, $\sc\f=0$ as well. Hence $s(\phi)=0$ by Lemma \ref{s-c}.
\end{proof}

As a consequence of Proposition \ref{tf-pomega} and Theorem \ref{endorigid} we obtain a complete characterization of the torsion-free abelian groups of torsion-free rank $1$ in $\mathfrak S$.

\begin{corollary}\label{tf:rk1}
Let $G$ be a torsion-free abelian group of rank $1$, then $s(G)= 0$ if and only if no infinity appears in the type of $G$. 
\end{corollary}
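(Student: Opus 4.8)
The plan is to translate the hypothesis on the type into the divisibility condition $p^\omega G = 0$ that appears in Proposition \ref{tf-pomega} and Theorem \ref{endorigid}, and then to invoke those two results directly. The whole argument rests on the classical structure theory of rank-$1$ torsion-free abelian groups (see \cite{F}): such a $G$ embeds in $\Q$, and up to this identification we may assume $1\in G$; the type of $G$ is then the sequence of $p$-heights $(h_p)_p$ of $1$, with $h_p\in\N\cup\{\infty\}$, and whether $h_p=\infty$ is independent of the chosen representative of the type, so that ``no infinity appears in the type'' is a well-defined condition.

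First I would record the two rank-$1$ facts that drive the reduction. For a fixed prime $p$: $h_p=\infty$ if and only if $G$ is $p$-divisible, in which case $p^nG=G$ for every $n$ and hence $p^\omega G=G\neq 0$; conversely, if $h_p<\infty$ then a short $p$-adic valuation computation shows that no nonzero element of $G$ lies in $p^nG$ for every $n$, so $p^\omega G=0$. Consequently, \emph{no infinity appears in the type of $G$} is equivalent to \emph{$p^\omega G=0$ for every prime $p$}. I would also observe that, viewing $\End(G)$ as a subring of $\Q$, one has $1/p\in\End(G)$ exactly when $G$ is $p$-divisible; since every subring of $\Q$ has the form $\Z[1/p:p\in S]$, this means $\End(G)=\Z$ precisely when $G$ is not $p$-divisible for any $p$. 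Thus the absence of infinities in the type is moreover equivalent to $G$ being endorigid.

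With these equivalences in hand both implications are immediate. For the forward direction, assume $s(G)=0$; since $s(G)=ns(G)+s_0(G)$ by \eqref{s=ns+s0}, we get $ns(G)=0$, and Proposition \ref{tf-pomega} yields $p^\omega G=0$ for every prime $p$, i.e. no infinity appears in the type. For the converse, assume no infinity appears in the type; then $G$ is endorigid and $p^\omega G=0$ for all $p$ by the previous step, so the implication (b)$\Rightarrow$(a) of Theorem \ref{endorigid} gives $s(G)=0$.

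The only real content is the rank-$1$ dictionary of the second paragraph, namely the identification of $p^\omega G$, of $p$-divisibility, and of $\End(G)$ in terms of the heights $h_p$. The point requiring care is that the forward implication uses only Proposition \ref{tf-pomega}, which holds for all torsion-free groups, whereas the converse genuinely relies on Theorem \ref{endorigid} and therefore on endorigidity; this is exactly why one must check that in rank $1$ the absence of infinities forces $\End(G)=\Z$, and not merely $p^\omega G=0$.
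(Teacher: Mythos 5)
Your proof is correct and follows essentially the same route as the paper's: the forward direction via $s(G)=ns(G)+s_0(G)$ and Proposition \ref{tf-pomega}, and the converse by identifying $\End(G)$ with a subring of $\Q$ (so that absence of infinities in the type forces $\End(G)\cong\Z$) and then applying Theorem \ref{endorigid}. The only difference is that you spell out the rank-$1$ dictionary between heights, $p$-divisibility, $p^\omega G$, and $\End(G)$, where the paper simply cites \cite[Section 106, Example 4]{F}.
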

\begin{proof}
Suppose that $s(G)=0$. By Proposition \ref{tf-pomega}, $p^{\omega}G=0$ for every prime $p$; this implies that no infinity appears in the type of $G$. 
On the other hand, since $G$ is isomorphic to a subgroup of $\Q$, $\End(G)$ is isomorphic to the subring of $\Q$ generated by $\left\{1,\frac{1}{p}:p\ \text{prime},\ pG=G\right\}$ (see \cite[Section 106, Example 4]{F}). This shows that, if there is no infinity in the type of $G$, then $\End(G)\cong \Z$ and Theorem \ref{endorigid} applies to conclude $s(G)=0$.
\end{proof}

Using Corollary \ref{tf:rk1} we can explicitly construct an example of a decomposable torsion-free abelian group $G$ such that $s(G)=0$:

\begin{example}
Let $A$ and $B$ be two non trivial subgroups of $\Q$ whose types are respectively $(a_1,\dots,a_n,\dots)$ and $(b_1,\dots,b_n,\dots)$ where $a_{2n}=b_{2n+1}=1$ and $a_{2n+1}=b_{2n}=0$ for every $n\in\N_+$. Then  $s(A)=s(B)=0$ in view of Corollary \ref{tf:rk1}, and both $A$ and $B$ are fully invariant subgroups of $A\oplus B$. Now Lemma \ref{LemmaY-fi}(a) yields $s(A\oplus B)=0$.
\end{example}

The following theorem shows that the torsion-free abelian groups are difficult to classify from the point of view of the string numbers. To prove it we use strong results from \cite{GT} and \cite{GF}.

\begin{theorem}\label{tf--}
For every infinite cardinal $\lambda$, there exist indecomposable torsion-free abelian groups $G$ of rank $\lambda$ with $ns(G)=s(G)=\infty$ and there exist indecomposable torsion-free abelian groups $H$ of rank $\lambda$ with $s(H)=0$.
\end{theorem}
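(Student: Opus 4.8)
<br>

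The plan is to construct, for each infinite cardinal $\lambda$, two indecomposable torsion-free abelian groups of torsion-free rank $\lambda$ with opposite behavior of the string numbers, exploiting the realization theorems cited in the paper. The key observation that drives everything is Theorem \ref{endorigid}: for an endorigid torsion-free group $G$ (i.e. $\End(G)=\Z$) we have $s(G)=ns(G)$, $s_0(G)=0$, and $s(G)=0$ precisely when $p^\omega G=0$ for all primes $p$. Since endorigid implies indecomposable, if I can realize $\Z$ as the endomorphism ring of a torsion-free group of rank $\lambda$, and control the $p$-divisibility $p^\omega G$, then I obtain both desired examples just by toggling that single condition.

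First I would handle the group $H$ with $s(H)=0$. The plan is to invoke a realization theorem (from \cite{GT}, via the Black Box machinery in the uncountable case, or Corner's theorem \cite{C1} adapted to the countable case) to produce, for every infinite cardinal $\lambda$, an endorigid torsion-free abelian group $H$ of rank $\lambda$ that is in addition \emph{reduced and $p$-reduced} in the strong sense that $p^\omega H=0$ for every prime $p$ --- equivalently, $H$ is a cotorsion-free group with $\End(H)=\Z$. Such groups are exactly what the cited realization theorems deliver (a cotorsion-free group contains no copy of $\J_p$ and is Hausdorff in every $p$-adic topology, so $\bigcap_n p^nH=0$). Then Theorem \ref{endorigid}(b)$\Rightarrow$(a) gives $s(H)=ns(H)=0$ immediately, and indecomposability is automatic from $\End(H)=\Z$.

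For the group $G$ with $s(G)=ns(G)=\infty$, the plan is to produce an indecomposable torsion-free group of rank $\lambda$ for which $p^\omega G\neq 0$ for some prime $p$. Here I would appeal to the constructions in \cite{GF} (or again \cite{GT}) that realize indecomposable torsion-free groups of prescribed rank while allowing nontrivial $p$-divisibility; concretely, one can arrange $G$ to be $p$-divisible-in-the-limit so that $p^\omega G=\sc\mu_p\neq0$. Then Proposition \ref{tf-pomega} (contrapositive) forces $ns(G)=\infty$, and since $s(G)\geq ns(G)$ by \eqref{s=ns+s0}, also $s(G)=\infty$. The point is that $ns$ detects the single structural feature $p^\omega G\neq0$, so I do not need endorigidity for this half --- mere indecomposability together with nonzero $p^\omega G$ suffices.

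The main obstacle is the uncountable realization step: producing indecomposable (resp.\ endorigid) torsion-free groups of \emph{arbitrarily large} rank $\lambda$ simultaneously with the prescribed $p$-adic behavior. For countable $\lambda=\omega$ Corner's classical theorem suffices and one can control $p^\omega G$ by hand, but for general infinite $\lambda$ one must invoke the Shelah Black Box realization results from \cite{GT} and the indecomposable-group constructions of \cite{GF}; the delicate part is checking that these theorems can be run with the extra constraint $p^\omega G=0$ (for $H$) or $p^\omega G\neq0$ (for $G$) without destroying indecomposability or the endomorphism ring. Once the existence of the two families is granted by those strong results, the string-number conclusions follow purely formally from Theorem \ref{endorigid} and Proposition \ref{tf-pomega}.
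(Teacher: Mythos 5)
Your plan coincides in structure with the paper's proof: the paper also obtains both groups from realization theorems --- \cite[Corollary 14.5.3]{GT} gives an endorigid torsion-free group $G$ of rank $\lambda$ with $p^\omega G\neq0$ for some prime $p$, and \cite[Corollary 5.4]{GF} gives an endorigid torsion-free group $H$ of rank $\lambda$ with $p^\omega H=0$ for every prime $p$ --- and then applies Theorem \ref{endorigid} to both. (You attach the two references to the opposite halves, but that is immaterial in a blind attempt.) Your one genuine variation is the observation that for $G$ endorigidity is not needed: indecomposability together with $p^\omega G\neq0$ already yields $ns(G)=\infty$ by Proposition \ref{tf-pomega} and then $s(G)=\infty$ by \eqref{s=ns+s0}; this is correct, and is in fact the same mechanism the paper uses, since Theorem \ref{endorigid}(a)$\Rightarrow$(b) is itself Proposition \ref{tf-pomega}.

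There is, however, one genuine error, and it sits exactly at the step that carries the real content of the theorem. Your justification of the $H$ half rests on the claim that a cotorsion-free group is Hausdorff in every $p$-adic topology, i.e.\ that cotorsion-freeness implies $\bigcap_n p^nH=0$. This is false: $\Z[1/p]$ is cotorsion-free (torsion-free, reduced, containing no copy of $\J_q$ for any prime $q$) yet it is $p$-divisible, so $p^\omega \Z[1/p]=\Z[1/p]\neq0$. More to the point, even endorigidity does not force $p^\omega H=0$ --- this is precisely why the first half of the theorem is possible at all, since \cite[Corollary 14.5.3]{GT} produces endorigid groups with $p^\omega G\neq0$. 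Consequently, a generic Black Box realization of $\Z$ as an endomorphism ring does not ``exactly deliver'' groups with $p^\omega H=0$ for all $p$: the vanishing of every $p^\omega H$ is an extra constraint that must be built into the construction, which is what the paper's appeal to \cite[Corollary 5.4]{GF} (modules with absolute endomorphism rings) supplies. Since the string-number deductions are formal once existence is granted, this false equivalence is the one place where your argument as written would fail; it is repaired by invoking a realization result that explicitly controls the $p$-adic behaviour rather than inferring it from cotorsion-freeness.
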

\begin{proof}
Let $\lambda$ be an infinite cardinal. By \cite[Corollary 14.5.3]{GT} there exists an endorigid torsion-free abelian group $G$ with $r_0(G)=\lambda$, and such that $p^\omega G\neq 0$ for some prime $p$. By Theorem \ref{endorigid}, $ns(G)=s(G)=\infty$. 
By \cite[Corollary 5.4]{GF} there exists an endorigid torsion-free abelian group $H$ with $r_0(H)=\lambda$, and such that $p^\omega H=0$ for every prime $p$. By Theorem \ref{endorigid}, $s(H)=0$.
\end{proof}

\section{Closure properties for $\mathfrak S$, $\Sns$, and $\S0$}\label{stab-sec}

In this section we consider some closure properties of the classes $\mathfrak S$, $\Sns$ and $\S0$, namely, closure under taking subgroups, quotients, summands, direct sums, extensions and direct products.

\smallskip
By Theorem \ref{ThB}, if $\mathfrak T$ denotes the class of all torsion abelian groups,
$$\mathfrak S\cap \mathfrak T=\S0\cap\mathfrak T=\{G\ \text{torsion}:t_p(G)\ \text{finite, for every prime}\ p\}.$$
Since the property of having finite $p$-components is stable under taking subgroups, quotients, finite direct sums and extensions, the class $\mathfrak S\cap\mathfrak T$ is closed under taking subgroups, quotients, finite direct sums and extensions. 

\smallskip
Contrarily to the torsion case, the classes $\mathfrak S$, $\Sns$ and $\S0$ satify these properties only in very particular cases. 
Indeed, part (a) of the next example shows that the classes $\mathfrak S$, $\Sns$ and $\S0$ are not closed under taking subgroups. Moreover, item (b) shows that $\Sns\cap\mathfrak T$ is not closed under taking subgroups, contrarily to $\mathfrak S\cap \mathfrak T$.

\begin{example}\label{mono-sub}
\begin{itemize}
\item[(a)] Let $G$ be an endorigid torsion-free abelian group with $r_0(G)\geq\aleph_0$, such that $p^\omega G=0$ for every prime $p$ (as noted in the proof of Theorem \ref{tf--}, such a $G$ exists). Then $s(G)=ns(G)=s_0(G)=0$ in view of Theorem \ref{endorigid}, while $G$ contains a subgroup $H$ isomorphic to $\Z^{(\N)}$, which has $s(H)=ns(H)=s_0(H)=\infty$ by Example \ref{Z}(d).
\item[(b)] By Theorem \ref{th1} there exists an abelian $p$-group $G$ such that $ns(G)=0$, and with an infinite $p$-basic subgroup $B$, which has $ns(B)=\infty$ by Example \ref{p-basic}.
\end{itemize}
\end{example}

By Fact \ref{ent*}(c) we derive immediately that, given an abelian group $G$ and a fully invariant subgroup $H$ of $G$, then $s(G)\geq s(H)$, $ns(G)\geq ns(H)$ and $s_0(G)\geq s_0(H)$. Equivalently, $\mathfrak S$, $\Sns$ and $\S0$ are closed under taking fully invariant subgroups.

\medskip
Part (a) of the next example shows that the classes $\mathfrak S$ and $\S0$ are not closed under taking quotients. Moreover, part (b) shows that $\Sns\cap\mathfrak T$ (and so also $\Sns$) is not closed under taking quotients, contrarily to $\mathfrak S\cap \mathfrak T$.

\begin{example}\label{mono-subbis}
\begin{itemize}
\item[(a)] 
Consider the group $G$ of Example \ref{mono-sub}(a). Since $r_0(G)\geq\aleph_0$, we can find an embedding
$\Z^{(\Q)}\hookrightarrow G$. Moreover, the canonical surjection $\Z^{(\Q)}\to \Q$ can be extended to a surjection $G\to \Q$, as $\Q$ is divisible. Now choose a prime $p$; since $\Z(p^\infty)$ is a quotient of $\Q$, we can easily construct an epimorphism $G\to \Z(p^\infty)$, whose kernel $K$  is fully invariant in $G$ (in fact, as $G$ is endorigid, every subgroup of $G$ is fully invariant in $G$). Finally, notice that $s(G)=0$ by Theorem \ref{endorigid} and $s_0(\Z(p^\infty))=\infty$ by Example \ref{prufer}.
\item[(b)] Consider the group of Example \ref{mono-sub}(b), that is an abelian $p$-group $G$ with a basic subgroup $B$ such that $ns(G)=0$ and $ns(B)=\infty$. By a celebrated theorem of Szele (see for example \cite[36.1]{F}), $B$ is a quotient of $G$.
\end{itemize}
\end{example}

Lemma \ref{LemmaY} implies that $\mathfrak S$, $\Sns$ and $\S0$ are closed under taking direct summands. We consider now the stability of these classes for direct sums. As observed after Lemma \ref{LemmaY}, $\mathfrak S$, $\Sns$ and $\S0$ are not closed under taking finite direct sums (and so they are not closed also under extensions). On the other hand, we saw in Lemma \ref{LemmaY-fi} that, given an abelian group $G$ that is the direct sum of a family of fully invariant subgroups belonging to $\mathfrak S$ (respectively, $\Sns$, $\S0$), then $G$ belongs to $\mathfrak S$ (respectively, $\Sns$, $\S0$) as well. 
In Theorem \ref{dec} we give a partial converse to this fact when $G$ is torsion-free. Namely we show that, in order to have $G\in \mathfrak{S}$ (respectively, $\Sns$),  the hypothesis of being fully invariant is necessary for the summands of $G$.

\begin{theorem}\label{dec}
Let $A$ be a set, $H_\alpha$ a torsion-free abelian group for every $\alpha\in A$, and $G=\bigoplus_{\alpha\in A} H_\alpha$. Then
\begin{itemize}
\item[(a)] $s(G)=0$ if and only if $H_\alpha$ is fully invariant in $G$ for every $\alpha$ in $A$ and $s(H_\alpha)=0$ for every $\alpha\in A$;
\item[(b)] $ns(G)=0$ if and only if $H_\alpha$ is fully invariant in $G$ for every $\alpha$ in $A$ and $ns(H_\alpha)=0$ for every $\alpha\in A$.
\end{itemize}
\end{theorem}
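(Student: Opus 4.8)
The ``if'' parts of (a) and (b) are exactly Lemma \ref{LemmaY-fi}(a) and (b): if every $H_\alpha$ is fully invariant in $G$ and $s(H_\alpha)=0$ (resp.\ $ns(H_\alpha)=0$), then $s(G)=0$ (resp.\ $ns(G)=0$). For the ``only if'' parts, assume $s(G)=0$ (resp.\ $ns(G)=0$). Since each $H_\alpha$ is a direct summand of $G$, Lemma \ref{LemmaY}(a) (resp.\ (b)) immediately gives $s(H_\alpha)=0$ (resp.\ $ns(H_\alpha)=0$) for every $\alpha$. Thus the entire content of the theorem reduces to the single claim: \emph{if some $H_\alpha$ fails to be fully invariant in $G$, then $ns(G)=\infty$}. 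Because $s(\phi)=ns(\phi)+s_0(\phi)\geq ns(\phi)$ for every endomorphism, and hence $s(G)\geq ns(G)$, this one claim contradicts both $s(G)=0$ and $ns(G)=0$, settling the ``only if'' parts of (a) and (b) simultaneously.

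To prove the claim, suppose $H_\alpha$ is not fully invariant, so there are $\psi\in\End(G)$ and $a\in H_\alpha$ with $\psi(a)\notin H_\alpha$. Writing $\pi_\gamma$ and $\iota_\gamma$ for the canonical projections and injections of the decomposition $G=\bigoplus_\gamma H_\gamma$, there is some $\beta\neq\alpha$ with $b:=\pi_\beta(\psi(a))\neq0$; set $f:=\pi_\beta\circ\psi\circ\iota_\alpha\colon H_\alpha\to H_\beta$, a nonzero homomorphism with $f(a)=b$. The plan is to turn this single off-diagonal map into a \emph{shear} automorphism of $G$. Put $N:=\iota_\beta\circ f\circ\pi_\alpha\in\End(G)$ and $\phi:=\mathrm{id}_G+N$. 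Because $\alpha\neq\beta$ we have $\pi_\alpha\iota_\beta=0$, hence $N^2=0$; therefore $\phi$ is an automorphism of $G$, with inverse $\mathrm{id}_G-N$. In particular $\phi$ is surjective, so $\sc\phi=G$.

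It then remains to locate a non-quasi-periodic point of $\phi$ inside $\sc\phi=G$ and invoke Proposition \ref{AAA}(b). I compute the orbit of $a$. Since $N(a)=\iota_\beta f(a)=b$ and $N(b)=\iota_\beta f(\pi_\alpha(b))=0$ (as $b\in H_\beta$ gives $\pi_\alpha(b)=0$), an easy induction yields $\phi^n(a)=a+nb$ for every $n\in\N$. This is the only place where torsion-freeness of the summands enters: $b\in H_\beta\setminus\{0\}$ has infinite order, so the elements $a+nb$ are pairwise distinct and the forward orbit $\{\phi^n(a):n\in\N\}$ is infinite. Hence $a\in\sc\phi\setminus Q\Per(\phi)$, and Proposition \ref{AAA}(b) gives $ns(\phi)=\infty$, whence $ns(G)=\infty$, as claimed.

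The only genuinely delicate point is the choice of the witnessing endomorphism: a naive diagonal or strictly triangular map built from $f$ has surjective core that is either trivial or made of periodic points, and so detects no non-singular string. The trick that makes everything work is that the shear $\mathrm{id}_G+N$ with $N^2=0$ is automatically an automorphism, so its surjective core is all of $G$, while its off-diagonal part simultaneously forces the infinite affine orbit $a\mapsto a+nb$. I expect verifying these two features of the shear---invertibility from $N^2=0$, and the orbit computation---to be the crux, the remaining bookkeeping with projections and injections being routine.
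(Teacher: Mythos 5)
Your proof is correct and is essentially the paper's own argument: the paper likewise extracts a nonzero homomorphism $\psi\colon H_\alpha\to H_\beta$ from the failure of full invariance and uses exactly your shear, namely the endomorphism of $H_\alpha\oplus H_\beta$ with matrix $\left(\begin{smallmatrix}1 & \psi\\ 0 & 1\end{smallmatrix}\right)$, for which it writes down the explicit non-singular string $\{(-n\psi(x),x)\}_{n\in\N}$ (torsion-freeness making its elements distinct) and then concludes via Lemma \ref{LemmaY}. Your only, harmless, deviation is in the verification: instead of exhibiting this backward-orbit string, you use $N^2=0$ to see that the shear is an automorphism, hence $\sc\phi=G$, and then apply Proposition \ref{AAA}(b) to the infinite forward orbit $a+nb$.
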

\begin{proof}
(a) If $H_\alpha$ is fully invariant in $G$ and $s(H_\alpha)=0$ for every $\alpha\in A$, then $s(G)=0$ by Lemma \ref{LemmaY-fi}.
It remains to prove the converse implication. If $s(G)=0$, then $s(H_\alpha)=0$ for all $\alpha\in A$ by Lemma \ref{LemmaY}(a). Assume that $H_\alpha$ is not fully invariant for some $\alpha\in A$. Then there exist $\beta\neq\alpha$ in $A$ and a non-zero homomorphism $\psi: H_\alpha \to H_\beta$. Since $\psi$ is non-zero, there exists $x\in H_\alpha\setminus\{0\}$ such that $\psi(x)\neq0$. Then $S=\{(-n \psi(x),x)\}_{n\in\N}$ is a pseudostring of the endomorphism $\Psi$ of $H_\alpha\oplus H_\beta$ given by the matrix $\begin{pmatrix}1 & \psi \\ 0 & 1\end{pmatrix}$. Since $H_\alpha\oplus H_\beta$ is torsion-free, it follows that $S$ is a (non-singular) string of $\Psi$. Consequently, $s(H_\alpha\oplus H_\beta)=\infty$ and hence $s(G)=\infty$ by Lemma \ref{LemmaY}(a). 

\medskip
(b) Proceed analogously to (a).
\end{proof}

We now consider the class $\S0$.
It presents a different behaviour to that of $\mathfrak S$ and $\Sns$ described by Theorem \ref{dec}; indeed, the implication given by Lemma \ref{LemmaY-fi} holds also for $s_0(-)$, while the abelian group $G=\Z\oplus\Z$ belongs to $\mathfrak S_0$, even if $\Z\oplus0$ and $0\oplus\Z$ are not fully invariant subgroups of $G$.

Moreover, the following inequality holds:

\begin{lemma}\label{WADT}
Let $G$ be an abelian group and $H$ a fully invariant subgroup of $G$. Then 
$$s_0(G)\leq s_0(H)+s_0(G/H).$$
Equality holds if $H$ is a direct summand of $G$.
\end{lemma}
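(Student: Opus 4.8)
The plan is to exploit the dichotomy of values. Since $s_0(-)$ takes values only in $\{0,\infty\}$, both sides of the asserted inequality lie in $\{0,\infty\}$, and the inequality is equivalent to the implication: if $s_0(H)=0$ and $s_0(G/H)=0$, then $s_0(G)=0$. I would prove the contrapositive. Assume $s_0(G)=\infty$; then some $\phi\in\End(G)$ has $s_0(\phi)=\infty$, so $\phi$ admits a null string $S=\{x_n\}_{n\in\N}$, i.e.\ $\phi(x_n)=x_{n-1}$, the $x_n$ are pairwise distinct, $x_0\neq0$ and $\phi^k(x_0)=0$ for some $k\in\N_+$. Because $H$ is fully invariant, $\phi(H)\subseteq H$, so $\phi$ restricts to $\phi\restriction_H\in\End(H)$ and induces $\overline\phi\in\End(G/H)$; write $\overline{x}_n=x_n+H$. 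Then $\{\overline{x}_n\}_{n\in\N}$ is a pseudostring of $\overline\phi$, and I recall that every pseudostring of an endomorphism is contained in its surjective core, so $\overline{x}_n\in\sc{\overline\phi}$ for every $n$.

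The decisive step is a dichotomy on whether $S$ stays inside $H$. If $x_n\in H$ for every $n$, then $S$ is literally a null string of $\phi\restriction_H$ (it is a string, $x_0\neq0$, and $(\phi\restriction_H)^k(x_0)=\phi^k(x_0)=0$), whence $s_0(\phi\restriction_H)=\infty$ and so $s_0(H)=\infty$. Otherwise, let $n_0$ be the least index with $x_{n_0}\notin H$, so $\overline{x}_{n_0}\neq0$. Since $\overline\phi^{\,n_0}(\overline{x}_{n_0})=\overline{x}_0$ and $\overline\phi^{\,n_0+k}(\overline{x}_{n_0})=\overline{\phi^{k}(x_0)}=0$, the nonzero element $\overline{x}_{n_0}\in\sc{\overline\phi}$ is annihilated by a positive power of $\overline\phi$; taking the last nonzero iterate $y=\overline\phi^{\,m-1}(\overline{x}_{n_0})$, where $m\geq1$ is minimal with $\overline\phi^{\,m}(\overline{x}_{n_0})=0$, produces a nonzero element of $\ker\overline\phi\cap\sc{\overline\phi}$, because $\sc{\overline\phi}$ is $\overline\phi$-invariant. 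By Proposition \ref{AAA}(c) this forces $s_0(\overline\phi)=\infty$, hence $s_0(G/H)=\infty$. In either case one of $s_0(H)$, $s_0(G/H)$ is infinite, which is the contrapositive we wanted.

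The step I expect to be the main obstacle is exactly the case where $x_0\in H$ but the string later escapes $H$: here one cannot simply restrict the datum to $H$, because $\sc\phi\cap H$ may strictly contain $\sc{(\phi\restriction_H)}$, so the ``nonzero kernel element in the surjective core'' characterization of Proposition \ref{AAA}(c) does not transfer back to $H$. Passing to $G/H$ and tracking the first index at which $S$ leaves $H$ is what circumvents this difficulty. Finally, for the equality when $H$ is a direct summand, I would write $G=H\oplus K$ so that $G/H\cong K$; Lemma \ref{LemmaY}(c) in contrapositive form, together with Fact \ref{ent*}(a), gives $s_0(G)\geq\max\{s_0(H),s_0(K)\}$, and since on $\{0,\infty\}$ the sum coincides with the maximum, this reads $s_0(G)\geq s_0(H)+s_0(G/H)$; combined with the inequality just established, equality follows.
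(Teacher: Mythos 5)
Your proof is correct, and its key step differs genuinely from the paper's --- in a way that matters. The paper proves the same contrapositive but pivots on the location of $K=\ker\phi\cap\sc\phi$, which is nonzero by Proposition \ref{AAA}(c): when $K\not\subseteq H$ it passes to the quotient exactly as you do (the image of $K$ in $G/H$ is a nonzero subgroup of $\ker\overline\phi\cap\sc{\overline\phi}$), but when $K\subseteq H$ it concludes $s_0(H)=\infty$ by asserting the identity $\sc\phi\cap H=\sc{(\phi\restriction_H)}$. That identity is precisely the obstacle you flagged, and it fails in general: for $G=\Z(p^\infty)$, $\phi=\mu_p$, $H=G[p]$ one has $\sc\phi=G$, so $K=G[p]$ is nonzero and contained in $H$, yet $\phi\restriction_H=0$, hence $\sc{(\phi\restriction_H)}=0$ and indeed $s_0(H)=0$ since $H$ is finite; here only $s_0(G/H)=\infty$ holds, because every null string of $\mu_p$ escapes $H$. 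So the paper's first branch breaks down exactly where you predicted, whereas your case split --- whether the null string stays in $H$ or escapes --- repairs it: a string that stays in $H$ is literally a null string of $\phi\restriction_H$, with no comparison of surjective cores needed, while for an escaping string your first-escape index produces the nonzero element of $\ker\overline\phi\cap\sc{\overline\phi}$, which is the paper's (sound) quotient branch. Your treatment of the equality statement, via Lemma \ref{LemmaY}(c) and conjugation invariance, coincides with the paper's. In short: the paper's argument is terser, trading strings for kernel-and-core bookkeeping, but as written it has a gap in the case $K\subseteq H$; your version is sound throughout and can be read as a correction of it.
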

\begin{proof}
We have to prove that, if $s_0(G)=\infty$, then at least one between $s_0(H)$ and $s_0(G/H)$ is infinite. Suppose that there exists $\phi\in\End(G)$ such that $s_0(\phi)=\infty$. 
By Proposition \ref{AAA}(c), $\ker\phi\restriction_{\sc\phi}\neq0$. Let $K=\ker\phi\restriction_{\sc\phi}=\ker\phi\cap\sc\phi$.
 If $K\subseteq H$, since $\ker\phi\cap H=\ker\phi\restriction_H$ and $\sc\phi\cap H=\sc\phi\restriction_H$ and $\ker\phi\restriction_H\cap\sc\phi\restriction_H\subseteq K$, it follows that $K=\ker\phi\restriction_H\cap\sc\phi\restriction_H$. By Proposition \ref{AAA}(c) $s_0(\phi\restriction_H)=\infty$ and so $s_0(H)=\infty$.
 If $K\not\subseteq H$, consider the endomorphism $\overline\phi:G/H\to G/H$ induced by $\phi$ and the canonical projection $\pi:G\to G/H$. Since $\pi(\ker\phi)\subseteq\ker\overline\phi$ and $\pi(\sc\phi)\subseteq\sc {\overline{\phi}}$, $\pi(K)\subseteq \ker\overline\phi\cap\sc{\overline{\phi}}$. As $K\not\subseteq H$, $\pi(K)\neq0$, so $\ker\overline\phi\cap\sc{\overline{\phi}}\neq 0$ and hence $s_0(\overline\phi)=\infty$ by Proposition \ref{AAA}(c). Consequently, $s_0(G/H)=\infty$.

The last statement follows from the inequality and from Lemma \ref{LemmaY}(c).
\end{proof}

The inequality in Lemma \ref{WADT} can be strict as the following example shows.

\begin{example}
%
Consider the abelian group $G$ constructed in Example \ref{mono-sub}(a), which has $s_0(G)=0$. It is shown in Example \ref{mono-subbis}(a) that $s_0(G/pG)=\infty$, as $G$ has infinite torsion-free rank. Since $pG$ is a fully invariant subgroup of $G$, this provides an example of how the inequality of Lemma \ref{WADT} can be strict.
%
\end{example}

\smallskip
Even if we have seen that $\S0$ is not closed under taking extensions (as $\mathfrak S$ and $\Sns$), we have the following consequence of Lemma \ref{WADT}.

\begin{corollary}
Let $G$ be an abelian group and $H$ a fully invariant subgroup of $G$. If $s_0(H)=0$ and $s_0(G/H)=0$, then $s_0(G)=0$. 
\end{corollary}

At this stage it should be clear that none of the classes $\mathfrak S$, $\Sns$ and $\S0$ is closed under taking direct products. We want now to study the case of an abelian group $G$ that is the direct product of fully invariant subgroups. We will see that (as for the direct sums) the behavior of $\S0$ is quite different from that of $\mathfrak S$ and $\Sns$. Let us start with the following

\begin{example}\label{ex-dir-prod}
Let $G=\prod_p \Z(p)$. Each $\Z(p)$ is fully invariant in $G$ and $s(\Z(p))=0$ as $\Z(p)$ is finite. Nevertheless, $s(G)=ns(G)=\infty$. 

Indeed, for every prime $p$, let $z_p	\in\Z$ be, modulo $p$, a generator of the cyclic multiplicative group $\Z(p)^*$ of all non-trivial elements of $\Z(p)$. Consider the endomorphism
$$\phi_p:\Z(p)\to \Z(p), \ \ \ \phi_p(x)=z_p\cdot x;$$ 
it is easily seen that $\phi_p$ is an automorphism of order $p-1$. Now consider on $G$ the diagonal endomorphism $\phi=(\phi_p)_p$ of the $\phi_p$. Then $\phi$ is an automorphism of $G$, such that $Q\Per(\phi)\neq G$, since the orbit of the element $(1_{\Z(p)})_p$ under the action of $\phi$ is infinite. By Proposition \ref{AAA} we conclude that $ns(\phi)=\infty$, and so $ns(G)=\infty$.
\end{example}

As announced above, we show now that we cannot find an analog of Example \ref{ex-dir-prod} for the class $\S0$.

\begin{lemma}
Let $G$ be an abelian group such that $G=\prod_{\alpha\in A}H_\alpha$, where $A$ is a set and $H_\alpha$ is a fully invariant subgroup of $G$ for every $\alpha\in A$. Then $s_0(G)=0$ if and only if $s_0(H_\alpha)=0$ for every $\alpha\in A$.
\end{lemma}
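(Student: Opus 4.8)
The plan is to prove the two implications separately: the forward one is a direct appeal to earlier results, while the backward one is the exact product-analogue of Lemma \ref{LemmaY-fi}(c), with Lemma \ref{sc-sp}(b) replacing Lemma \ref{sc-sp}(a).

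For the forward implication I would simply note that each factor is a direct summand of $G$, since $G=H_\alpha\oplus\prod_{\beta\in A\setminus\{\alpha\}}H_\beta$. Hence, if $s_0(G)=0$, then Lemma \ref{LemmaY}(c) gives $s_0(H_\alpha)=0$ for every $\alpha\in A$; no use of full invariance is needed here.

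For the converse I would assume $s_0(H_\alpha)=0$ for every $\alpha\in A$, fix an arbitrary $\phi\in\End(G)$, and aim to show $s_0(\phi)=0$, which by Proposition \ref{AAA}(c) is equivalent to $\ker\phi\cap\sc\phi=0$. Since each $H_\alpha$ is fully invariant, $\phi_\alpha:=\phi\restriction_{H_\alpha}$ is an endomorphism of $H_\alpha$, and from $s_0(H_\alpha)=0$ together with Proposition \ref{AAA}(c) we get $\ker\phi_\alpha\cap\sc\phi_\alpha=0$ for every $\alpha$. Now take $x=(x_\alpha)_{\alpha\in A}\in\ker\phi\cap\sc\phi$. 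By Lemma \ref{sc-sp}(b) we have $\sc\phi=\prod_{\alpha\in A}\sc\phi_\alpha$, so $x_\alpha\in\sc\phi_\alpha$ for every $\alpha$; and because $\phi$ acts componentwise, $0=\phi(x)=(\phi_\alpha(x_\alpha))_{\alpha\in A}$ forces $\phi_\alpha(x_\alpha)=0$, i.e. $x_\alpha\in\ker\phi_\alpha\cap\sc\phi_\alpha=0$, for every $\alpha$. Thus $x=0$, so $\ker\phi\cap\sc\phi=0$ and $s_0(\phi)=0$; as $\phi$ was arbitrary, $s_0(G)=0$.

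The step I expect to be the crux is the assertion that $\phi$ acts componentwise on the product, i.e. that the $\alpha$-coordinate of $\phi(x)$ depends only on $x_\alpha$. For a direct sum this is automatic (as in Lemma \ref{sc-sp}(a)), but for an infinite product it is genuinely delicate: the mechanism is that full invariance of every axis forces all cross-homomorphisms $\Hom(H_\alpha,H_\beta)$ with $\alpha\neq\beta$ to vanish, which is exactly the structure underlying Lemma \ref{sc-sp}(b), and so I would lean on that lemma to package this rather than re-derive the product description of $\sc\phi$ by hand. It is worth emphasizing that this good behaviour is special to $s_0$: Example \ref{ex-dir-prod} shows the analogue fails for $s$ and $ns$, so the argument must go through the injectivity criterion of Proposition \ref{AAA}(c) and the componentwise collapse of $\ker\phi$, not through the periodicity criteria (a) and (b).
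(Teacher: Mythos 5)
Your proof skeleton is exactly the paper's: the forward implication via Lemma \ref{LemmaY}(c), and the converse by restricting $\phi$ to each fully invariant axis to get $\phi_\alpha=\phi\restriction_{H_\alpha}$, invoking Lemma \ref{sc-sp}(b) to write $\sc\phi=\prod_{\alpha\in A}\sc{\phi_\alpha}$, and collapsing $\ker\phi\cap\sc\phi$ coordinate by coordinate via Proposition \ref{AAA}(c). The paper's proof asserts ``since $\phi$ acts componentwise'' at precisely the same two points where you need it. So in terms of approach you coincide with the paper; the problem is that the justification you offer for the step you yourself single out as the crux does not work, and the step itself is false in the stated generality.

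Full invariance does force $\Hom(H_\alpha,H_\beta)=0$ for $\alpha\neq\beta$ (your projection argument is fine), but this only controls $\phi$ on the direct sum $\bigoplus_{\beta\neq\alpha}H_\beta$; it says nothing about the restriction of $\phi$ to the full co-axis product $K_\alpha=\prod_{\beta\neq\alpha}H_\beta$, which can map into $H_\alpha$ through the quotient $K_\alpha/\bigoplus_{\beta\neq\alpha}H_\beta$. Concretely, let $G=\Q\times\prod_p\Z(p)$, the product taken over all primes. Every axis is fully invariant, since $\Q=d(G)$ and $\Z(p)=t_p(G)$; but $\bigl(\prod_p\Z(p)\bigr)/\bigl(\bigoplus_p\Z(p)\bigr)$ is a nonzero torsion-free divisible group, so there is a nonzero homomorphism $g:\prod_p\Z(p)\to\Q$ vanishing on $\bigoplus_p\Z(p)$, and $\psi(a,x)=(g(x),0)$ is an endomorphism of $G$ that maps every axis into itself (indeed kills each axis) yet does not act componentwise. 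The same example shows the difficulty cannot be hidden inside Lemma \ref{sc-sp}(b): the endomorphism $\phi(a,x)=(g(x),x)$ is idempotent, so $\sc\phi=\mathrm{Im}\,\phi=\{(g(x),x):x\in\prod_p\Z(p)\}$, whereas $\prod_\alpha\sc{\phi\restriction_{H_\alpha}}=0\times\prod_p\Z(p)$; thus the inclusion $\sc\phi\subseteq\prod_\alpha\sc{\phi\restriction_{H_\alpha}}$ --- the very direction you use --- fails for infinite products, and Lemma \ref{sc-sp}(b) (whose proof is ``analogous to (a)'', i.e.\ again rests on componentwise action) cannot be taken at face value here. To be clear, this is a gap you inherited from the paper rather than created, and the statement of the lemma is not contradicted by the example: all endomorphisms of $\Q\times\prod_p\Z(p)$ are triangular (as $\Hom(\Q,\prod_p\Z(p))=0$), and one checks directly that $\ker\phi\cap\sc\phi=0$ for each of them. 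But your proposed mechanism provably cannot close the gap: a correct argument must handle endomorphisms that move the product into an axis through the quotient modulo the direct sum, which neither your proof nor the paper's does.
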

\begin{proof}
If $s_0(G)=0$, then $s(H_\alpha)=0$ for every $\alpha\in A$ by Lemma \ref{LemmaY}(c). To prove the converse implication, suppose that $s_0(H_\alpha)=0$ for every $\alpha\in A$ and consider  $\phi\in\End(G)$. We have to verify that $s_0(\phi)=0$. Denote by $\phi_\alpha$ the restriction of $\phi$ to $H_\alpha$, with $\alpha\in A$. Since $\phi$ acts componentwise, each $\phi_\alpha$ is an endomorphism of $H_\alpha$. By Lemma \ref{sc-sp}, we have that $\sc \phi=\prod_{\alpha\in A}\sc \phi_\alpha$. Since, by hypothesis, $s_0(\phi_\alpha)=0$, we obtain that $\ker(\phi_\alpha)\cap \sc \phi_\alpha=0$ for all $\alpha\in A$ by Proposition \ref{AAA}(c). It follows that $\sc \phi\cap \ker(\phi)=\prod_{\alpha\in A}(\sc\phi_\alpha\cap\ker\phi_\alpha)=0$, and so $s_0(\phi)=0$ again by Proposition \ref{AAA}(c).
\end{proof}

\section{Strings and Hopficity}\label{SHopf}

In this section we study the relations between the three string numbers and the Hopfian property. In \cite{DGSZ} it is noted that a consequence of \cite[Proposition 2.9]{DGSZ} is that an abelian group $G$ with zero algebraic entropy is necessarily co-Hopfian (i.e., every monomorphism $G\to G$ is an automorphism of $G$). It is  also asked about a connection between zero algebraic entropy and the Hopfian property. 
Moreover, in \cite{GG} it is proved that the reduced torsion-free abelian groups with zero adjoint algebraic entropy are Hopfian.

\smallskip
First of all, Theorem \ref{Hopf} shows that Hopficity is a necessary property for an abelian group to have null string number zero. In other words, $\S0$ is contained in the class of all Hopfian abelian groups. 

\begin{theorem}\label{Hopf}
Every abelian group $G$ with $s_0(G)=0$ is Hopfian.  
\end{theorem}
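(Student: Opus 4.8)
The plan is to prove the contrapositive: suppose $G$ is not Hopfian and deduce $s_0(G)\neq 0$, i.e. $s_0(G)=\infty$. By definition, if $G$ is not Hopfian, there exists a surjective endomorphism $\phi\in\End(G)$ that is not injective, so $\ker\phi\neq 0$. The key observation is that since $\phi$ is surjective, its surjective core is all of $G$, that is $\sc\phi=G$. Indeed $G$ itself belongs to $\F_\phi$ because $\phi(G)=G$, so $\sc\phi=G$.

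With this in hand, the conclusion is immediate from the characterization already established in Proposition \ref{AAA}(c). First I would note that $\ker\phi\restriction_{\sc\phi}=\ker\phi\cap\sc\phi=\ker\phi\cap G=\ker\phi\neq 0$. Then Proposition \ref{AAA}(c) states that $s_0(\phi)=0$ if and only if $\ker\phi\restriction_{\sc\phi}=0$; since this kernel is nonzero, we get $s_0(\phi)=\infty$, and therefore $s_0(G)=\sup\{s_0(\psi):\psi\in\End(G)\}=\infty$. This contradicts $s_0(G)=0$, completing the contrapositive argument.

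I do not anticipate a genuine obstacle here: the entire statement reduces to recognizing that failure of the Hopfian property hands us exactly a surjective-but-not-injective endomorphism, which is the precise situation that Proposition \ref{AAA}(c) converts into an infinite null string number (the surjective core being the whole group makes the condition $\ker\phi\cap\sc\phi=0$ fail automatically). The only point worth stating carefully is the identification $\sc\phi=G$ for surjective $\phi$, which follows directly from the definition of the surjective core as the maximum of $\F_\phi$. One could alternatively invoke Lemma \ref{s-c} to reduce to the surjective core, but since $\phi$ is already surjective this reduction is trivial and it is cleaner to observe $\sc\phi=G$ outright. Thus the whole proof is a short two-line deduction from Proposition \ref{AAA}(c), and the ``hard part'' is merely setting up the contrapositive correctly.
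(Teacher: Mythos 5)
Your proof is correct and follows essentially the same route as the paper's: both arguments come down to the observation that a surjective non-injective endomorphism $\phi$ satisfies $s_0(\phi)=\infty$ by Proposition \ref{AAA}(c), hence cannot exist when $s_0(G)=0$. The only differences are presentational --- you phrase it as a contrapositive where the paper argues directly, and you spell out the identification $\sc\phi=G$ for surjective $\phi$, a detail the paper leaves implicit.
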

\begin{proof}
Let $\phi\in\End(G)$ and suppose that $\phi$ is surjective. If $\phi$ is non-injective, then $s_0(\f)=\infty$ by Proposition \ref{AAA}(c). This shows that $\phi$ has to be injective and so $G$ is Hopfian.
\end{proof}

Since $\mathfrak S\subseteq \S0$, we have the following immediate consequence of Theorem \ref{Hopf}. 

\begin{corollary}
Every abelian group $G$ with $s(G)=0$ is Hopfian.  
\end{corollary}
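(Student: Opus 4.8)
The statement to prove is the Corollary: every abelian group $G$ with $s(G)=0$ is Hopfian. The plan is to reduce this immediately to the preceding Theorem \ref{Hopf}, which asserts that every abelian group with $s_0(G)=0$ is Hopfian. The only ingredient needed is the containment $\mathfrak S\subseteq \S0$, which has already been established in the excerpt via the identity \eqref{s=ns+s0}, namely $s(G)=ns(G)+s_0(G)$. Since both summands are non-negative (taking values in $\{0,\infty\}$), the hypothesis $s(G)=0$ forces $s_0(G)=0$.

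The proof I would write is therefore a single short deduction. First I would invoke \eqref{s=ns+s0} to note that $s(G)=0$ implies $s_0(G)=0$, i.e. $G\in\S0$. Then I would apply Theorem \ref{Hopf} directly to conclude that $G$ is Hopfian. There is essentially no obstacle here: the containment $\mathfrak S\subseteq\S0$ is recorded right after the definition of the three classes, and the substantive work was already carried out in Theorem \ref{Hopf}, whose proof uses Proposition \ref{AAA}(c) to show that a surjective non-injective endomorphism would produce infinitely many null strings.

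\begin{proof}
By \eqref{s=ns+s0} we have $s(G)=ns(G)+s_0(G)$, so the hypothesis $s(G)=0$ forces $s_0(G)=0$; that is, $G\in\S0$. Theorem \ref{Hopf} then gives that $G$ is Hopfian.
\end{proof}
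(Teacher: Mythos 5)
Your proof is correct and matches the paper's own argument exactly: the paper derives this corollary as an immediate consequence of Theorem \ref{Hopf} via the containment $\mathfrak S\subseteq \S0$, which is precisely the reduction you make using \eqref{s=ns+s0}. Nothing is missing.
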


It is not true that an abelian group $G$ in $\Sns$ is necessarily Hopfian. Indeed, $\Z(p^\infty)$ is a torsion abelian group with $ns(\Z(p^\infty))=0$ and it is non-Hopfian. On the other hand $\Z(p^\infty)$ is a divisible abelian $p$-group, so the following problem is open:

\begin{problem}
Find a reduced abelian $p$-group $G$ with $ns(G)=0$ and such that $G$ is non-Hopfian.
\end{problem} 

Also for torsion-free abelian groups the same problem is open:

\begin{problem}\label{tfnH-ns}
Find a torsion-free abelian group $G$ with $ns(G)=0$ and such that $G$ is non-Hopfian.
\end{problem} 

All our examples of torsion-free abelian groups in $\Sns$ either have finite torsion-free rank or are endorigid. In both cases the abelian groups in question are Hopfian. Furthermore, the non-Hopfian torsion-free abelian groups $G$ that we consider are either infinite direct sums of some torsion-free abelian group or have $p^{\omega}G\neq 0$ for some prime $p$. In both cases $ns(G)=\infty$.

\medskip
The next example shows that the converse implication of Theorem \ref{Hopf} does not hold true, namely Hopficity is a necessary but not sufficient condition for an abelian group to have null string number zero. In particular, this answers negatively \cite[Question 3.13]{DGV}. Moreover, this example shows that the converse implication of Corollary \ref{B-2} does not hold true as well.

\begin{example}\label{hopf-s0=infty}
For a prime $p$, there exists an abelian $p$-group $G$ such that:
\begin{itemize}
\item[(a)] $G$ is Hopfian;
\item[(b)] $|G|=\cont$ (this implies that $r_p(G)$ is infinite);
\item[(c)] $s_0(G)=\infty$;
\item[(d)] $ns(G)=0$.
\end{itemize}
The construction of an abelian $p$-group $G$ satisfying (a) and (b) 
 is given in \cite[Theorem 16.4]{P}. To see that $G$ satisfies (c), since $r_p(G)$ is infinite by (b),
it is enough to apply Theorem \ref{ThB}. Moreover, as observed after \cite[Corollary 5.3]{DGSZ}, $\ent(G)=0$ and so $ns(G)=0$ as well in view of \eqref{ent=0->ns=0}.
\end{example}

The following  problem remains open.

\begin{problem}\label{Ht-ns}
Find a Hopfian torsion abelian group $G$ such that $ns(G)=\infty$.
\end{problem}

The torsion-free version of this problem is easily solved by $\Z^2$, which is an Hopfian abelian group not in $\Sns$ by Example \ref{Z}(b). It is less easy to find examples of Hopfian torsion-free abelian groups not in $\S0$. We now provide such an example, which is a modification of a group constructed in \cite[Example 2]{C}.

\begin{example}\label{tf-hopf}
We give the construction of a torsion-free Hopfian abelian group $G$ with $s_0(G)=\infty$.

Let $V$ be a $\Q$-vector space whose base is $\{a_k,b_k\}_{k\in\Z}$ and let $p,q_k$ $(k\in\Z)$ be distinct primes. Define
$$G=\bigoplus_{k\in\Z} \left\langle p^{-\infty}a_k,q_k^{-\infty}b_k, \frac{1}{p}(a_k+b_k)\right\rangle,$$
where $p^{-\infty}x$ stands for the set $\{p^{-n}x\}_{n\in\N}$. By the proof of \cite[Example 2]{C}, $G$ is Hopfian.
To show that $s_0(G)=\infty$, consider the assignments
$$a_k\mapsto a_{k+1} \text{ (if $k< 0$)},\ \ a_k\mapsto 0 \text{ (if $k\geq 0$)},\ \ b_k\mapsto 0 \text{ (for every $k\in\Z$);}$$
it is easy to prove that they induce a group homomorphism $\psi:G\to G$. 
Now it is clear that $\{a_k\}_{k<0}$ is a null string of $\psi$.
\end{example}

The next example (due to Corner) shows that there exists an abelian group $G$ such that $s_0(G)=0$ and $s_0(G\oplus G)=\infty$; as we already mentioned, this proves that $\S0$ is not closed under taking finite direct sums and extensions.

\begin{example}\label{hopf+hopf-non-hopf}
The group $G$ constructed in \cite[Example 3]{C} has the property that any non-zero endomorphism of $G$ is injective. In particular $G$ is Hopfian and $s_0(G)=0$ by Proposition \ref{AAA}(c). Furthermore $G\oplus G$ is not Hopfian; hence $s_0(G\oplus G)=\infty$ by Theorem \ref{Hopf}.
\end{example}

\section{Hereditary string numbers}\label{her-sec}

In this section we consider the problem of characterizing the classes $\widetilde{\mathfrak S}$, $\widetilde{\mathfrak S}_{ns}$ and $\widetilde{\mathfrak S}_0$. Theorem \ref{main-tilde} will completely solve it. We consider separately the torsion and the torsion-free case, which will lead to the proof of the general theorem.

\medskip
In view of \eqref{s=ns+s0}, 
\begin{equation}\label{s=ns+s0-tilde}
\widetilde s(G)=\widetilde{ns}(G)+\widetilde{s_0}(G);
\end{equation}
this is equivalent to $\widetilde{\mathfrak S}=\widetilde{\mathfrak S}_{ns}\cap \widetilde{\mathfrak S}_0$. Furthermore, by definition, each of three classes $\widetilde{\mathfrak S},$ $\widetilde{\mathfrak S}_{ns}$ and  $\widetilde{\mathfrak S}_0$ is closed under taking subgroups.

\begin{lemma}\label{s=0->r-fin}
Let $G$ be an abelian group. 
\begin{itemize}
\item[(a)] If either $\widetilde s(G)=0$, or $\widetilde{ns}(G)=0$, or $\widetilde{s_0}(G)=0$, then $r_p(G)$ is finite for every prime $p$ and $r_0(G)$ is finite.
\item[(b)] If either $\widetilde s(G)=0$, or $\widetilde{ns}(G)=0$, then $r_0(G)\leq1$.
\end{itemize}
\end{lemma}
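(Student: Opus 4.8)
The plan is to prove both parts by contraposition, exploiting that by the very definition of the hereditary invariants one has $\widetilde i(G)\geq i(H)$ for every subgroup $H\leq G$; hence it suffices, for each excluded rank condition, to exhibit a single subgroup $H$ of $G$ whose ordinary string numbers are infinite. All the values I will need are already recorded in Example \ref{Z}.

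For part (a) I would first handle the $p$-ranks. Assume $r_p(G)=\infty$ for some prime $p$. The $p$-socle $G[p]$ is then an $\mathbb F_p$-vector space of infinite dimension, so it contains a subgroup $H\cong\mathbb Z(p)^{(\mathbb N)}$. By Example \ref{Z}(d), $s(H)=ns(H)=s_0(H)=\infty$, so all three hereditary invariants of $G$ are infinite; therefore each of the hypotheses $\widetilde s(G)=0$, $\widetilde{ns}(G)=0$, $\widetilde{s_0}(G)=0$ forces $r_p(G)$ to be finite. The case of $r_0(G)$ is entirely parallel: if $r_0(G)=\infty$, then $G$ contains $\aleph_0$ many $\mathbb Z$-independent elements, hence a subgroup $H\cong\mathbb Z^{(\mathbb N)}$, and Example \ref{Z}(d) again gives $s(H)=ns(H)=s_0(H)=\infty$, so all three hereditary invariants are infinite whenever $r_0(G)$ is.

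For part (b) I would combine part (a) with the values for $\mathbb Z^2$. By (a), either hypothesis $\widetilde s(G)=0$ or $\widetilde{ns}(G)=0$ already yields $r_0(G)$ finite, so only the possibility $r_0(G)\geq 2$ must be excluded. If $r_0(G)\geq 2$, then two $\mathbb Z$-independent elements span a subgroup $H\cong\mathbb Z^2$, and Example \ref{Z}(b) gives $s(H)=ns(H)=\infty$; hence $\widetilde s(G)=\widetilde{ns}(G)=\infty$, and either vanishing hypothesis forces $r_0(G)\leq 1$. It is worth noting that $s_0(\mathbb Z^2)=0$, which is precisely why this sharper bound is unavailable for $\widetilde{s_0}$ and why $s_0$ is absent from part (b).

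I do not expect a genuine obstacle here: every step reduces to producing one explicit subgroup whose string number is already computed, and then using the monotonicity of $\widetilde i(-)$ under subgroups that is built into its definition. The only point needing a line of justification is the passage from an infinite $p$-rank (respectively, an infinite or $\geq 2$ torsion-free rank) to the existence of the relevant copy of $\mathbb Z(p)^{(\mathbb N)}$, $\mathbb Z^{(\mathbb N)}$ or $\mathbb Z^2$ inside $G$, which is routine linear algebra over $\mathbb F_p$ (respectively, over $\mathbb Q$).
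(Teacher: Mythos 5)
Your proposal is correct and follows essentially the same route as the paper: contraposition, exhibiting the witness subgroups $\Z(p)^{(\N)}$, $\Z^{(\N)}$ and $\Z^2$ inside $G$, and combining the values from Example \ref{Z}(b,d) with the monotonicity $\widetilde i(G)\geq i(H)$ for $H\leq G$ built into the definition. The only differences are cosmetic: you spell out the linear-algebra justification for the embeddings (which the paper leaves implicit) and you route part (b) through part (a), which is harmless but unnecessary since $r_0(G)>1$ alone already yields a copy of $\Z^2$.
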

\begin{proof}
(a) If $r_p(G)$ is infinite for some prime $p$, then $G$ contains a subgroup isomorphic to $\Z(p)^{(\N)}$. Since $s(\Z(p)^{(\N)})=ns(\Z(p)^{(\N)})=s_0(\Z(p)^{(\N)})=\infty$ by Example \ref{Z}(d), it follows that $\widetilde s(G)=\widetilde{ns}(G)=\widetilde{s_0}(G)=\infty$.
If $r_0(G)$ is infinite, then $G$ contains a subgroup isomorphic to $\Z^{(\N)}$. Since $s(\Z^{(\N)})=ns(\Z^{(\N)})=s_0(\Z^{(\N)})=\infty$ by Example \ref{Z}(d), it follows that $\widetilde s(G)=\widetilde{ns}(G)=\widetilde{s_0}(G)=\infty$.

\smallskip
(b) If $r_0(G)>1$, then $G$ contains a subgroup isomorphic to $\Z^2$. Since $s(\Z^2)=ns(\Z^2)=\infty$ by Example \ref{Z}(b), it follows that $\widetilde s(G)=\widetilde{ns}(G)=\infty$.
\end{proof}

The next lemma describes completely the torsion abelian groups in $\widetilde{\mathfrak S}=\widetilde{\mathfrak S}_0$. In particular, it adds equivalent conditions to Theorem \ref{ThB}.

\begin{lemma}\label{ss=0}
Let $G$ be a torsion abelian group. Then the following conditions are equivalent:
\begin{itemize}
\item[(a)] $\widetilde s(G)=0$ ($\Leftrightarrow$ $\widetilde{s_0}(G)=0$);
\item[(b)] $s(G)=0$ ($\Leftrightarrow$ $s_0(G)=0$);
\item[(c)] $t_p(G)$ is finite for every prime $p$.
\end{itemize}
In particular, $\widetilde s(G)=\widetilde{s_0}(G)$.
\end{lemma}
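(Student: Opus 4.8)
The plan is to prove the cycle (a)$\Rightarrow$(b)$\Rightarrow$(c)$\Rightarrow$(a), carrying both the $s$- and $s_0$-versions along simultaneously, and then to read off the final equality from the fact that these quantities only take the values $0$ and $\infty$.

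First, I would dispatch (a)$\Rightarrow$(b), which is immediate from the definition of the hereditary string numbers: taking $H=G$ in the defining supremum gives $\widetilde s(G)\geq s(G)$ and $\widetilde{s_0}(G)\geq s_0(G)$. Hence $\widetilde s(G)=0$ forces $s(G)=0$, and $\widetilde{s_0}(G)=0$ forces $s_0(G)=0$. Next, (b)$\Leftrightarrow$(c) is already available and I would simply cite it: combining Theorem \ref{ThB} with the reduction to reduced $p$-components furnished by Corollary \ref{red-to-p}, Lemma \ref{ridotti} and Lemma \ref{torsion-div}, one obtains that for a torsion abelian group $s(G)=0$ if and only if $s_0(G)=0$ if and only if $t_p(G)$ is finite for every prime $p$ (this is exactly the equality $\mathfrak S\cap\mathfrak T=\S0\cap\mathfrak T$ recorded at the beginning of Section \ref{stab-sec}).

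The only step with genuine content is (c)$\Rightarrow$(a), and the idea is to exploit that the condition in (c) is hereditary. Let $H\leq G$ be an arbitrary subgroup. Then $H$ is again torsion and, for every prime $p$, $t_p(H)=H\cap t_p(G)$ is a subgroup of the finite group $t_p(G)$, hence finite. Applying the already-established equivalence (b)$\Leftrightarrow$(c) to $H$ now yields $s(H)=s_0(H)=0$. Since this holds for every subgroup $H\leq G$, taking the supremum gives $\widetilde s(G)=\sup\{s(H):H\leq G\}=0$ and likewise $\widetilde{s_0}(G)=0$, which is precisely (a) (including its internal equivalence between the $s$- and $s_0$-versions, both of which are reached).

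Finally, for the last assertion $\widetilde s(G)=\widetilde{s_0}(G)$: each of these is a supremum of quantities lying in $\{0,\infty\}$, hence itself lies in $\{0,\infty\}$, and the chain above shows $\widetilde s(G)=0$ if and only if $\widetilde{s_0}(G)=0$; therefore the two values are simultaneously $0$ or simultaneously $\infty$, so they coincide. I do not anticipate a real obstacle here: the entire proof rests on the single observation $t_p(H)=H\cap t_p(G)\leq t_p(G)$, which makes the hereditary condition collapse onto the non-hereditary characterization already settled in Theorem \ref{ThB}; everything else is bookkeeping between the $s$- and $s_0$-versions.
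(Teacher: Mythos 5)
Your proposal is correct and follows essentially the same route as the paper: the paper's proof is exactly ``the equivalences follow from Theorem \ref{ThB} (via the reduction of torsion groups to reduced $p$-groups), noting that the property in (c) is hereditary,'' which is precisely your step (c)$\Rightarrow$(a) combined with your citation chain for (b)$\Leftrightarrow$(c). Your write-up merely makes explicit the bookkeeping (the identity $t_p(H)=H\cap t_p(G)$ and the $\{0,\infty\}$ dichotomy for the final equality) that the paper leaves implicit.
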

\begin{proof}
The equivalences follow directly from Theorem \ref{ThB}, noting that the property in (c) is hereditary.
\end{proof}

The condition of Corollary \ref{B-2}, which is  sufficient but not necessary in order to have $ns(G)=0$ for a torsion abelian group $G$, turns out to be equivalent to $\widetilde{ns}(G)=0$:

\begin{lemma}\label{ns=0}
Let $G$ be a torsion abelian group. Then $\widetilde{ns}(G)=0$ if and only if $r_p(G)$ is finite for every prime $p$.
\end{lemma}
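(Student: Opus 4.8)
The plan is to prove the two implications separately, invoking results already established in the excerpt. For the forward direction, I would simply note that $\widetilde{ns}(G)=0$ forces $r_p(G)$ to be finite for every prime $p$ by Lemma \ref{s=0->r-fin}(a); the accompanying conclusion on $r_0(G)$ is automatic (indeed vacuous) since $G$ is torsion. This half therefore requires no new argument.

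For the reverse direction, the key observation I would exploit is that the property of having all $p$-ranks finite is \emph{hereditary}. Concretely, for any subgroup $H\leq G$ and any prime $p$ one has $H[p]\subseteq G[p]$, whence
$$r_p(H)=\dim_{\mathbb F_p}(H[p])\leq\dim_{\mathbb F_p}(G[p])=r_p(G),$$
which is finite by hypothesis. Since a subgroup of a torsion abelian group is again torsion, Corollary \ref{B-2} applies to $H$ and yields $ns(H)=0$. Taking the supremum over all subgroups $H\leq G$ then gives $\widetilde{ns}(G)=\sup\{ns(H):H\leq G\}=0$, as desired.

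I do not expect any genuine obstacle in the argument: once the monotonicity $r_p(H)\leq r_p(G)$ is recorded, the statement is a direct combination of Lemma \ref{s=0->r-fin}(a) and Corollary \ref{B-2}. The only conceptual point worth emphasizing is that Corollary \ref{B-2}, which supplies a merely \emph{sufficient} condition for $ns(G)=0$ at the level of a single group, upgrades to an exact characterization after passing to the hereditary modification precisely because its hypothesis—finiteness of all $p$-ranks—is preserved under taking subgroups, whereas the property $ns(-)=0$ itself is not (cf. Example \ref{mono-sub}(b)).
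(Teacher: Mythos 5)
Your proposal is correct and follows exactly the paper's own proof: the forward implication via Lemma \ref{s=0->r-fin}(a), and the converse by observing that finiteness of all $p$-ranks passes to subgroups (the inequality $r_p(H)\leq r_p(G)$ you record is left implicit in the paper) and then applying Corollary \ref{B-2} to each subgroup. No gaps and no differences of substance.
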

\begin{proof}
If $\widetilde{ns}(G)=0$, then $r_p(G)$ is finite for every prime $p$ by Lemma \ref{s=0->r-fin}(a).
Suppose that $r_p(G)$ is finite for every prime $p$. Then $r_p(H)$ is finite for every prime $p$ and every subgroup $H$ of $G$. By Corollary \ref{B-2}, $ns(H)=0$ for every $H$, and hence $\widetilde{ns}(G)=0$.
\end{proof}

For a torsion abelian group $G$, in view of Lemma \ref{ss=0}, the condition $\widetilde{s_0}(G)$ becomes equivalent to $\widetilde s(G)=0$. So in this case $\widetilde{ns}(G)=0$ is weaker than $\widetilde{s_0}(G)=0$, as its value on $\Z(p^\infty)$ shows; indeed, $\Z(p^\infty)$ is an infinite abelian $p$-group of finite $p$-rank. In other words $\widetilde{\mathfrak S}_{ns}\cap\mathfrak T\supsetneq\widetilde{\mathfrak S}\cap \mathfrak T= \widetilde{\mathfrak S}_0\cap\mathfrak T$.

\medskip
Lemmas \ref{ss=0} and \ref{ns=0} characterize completely the torsion abelian groups respectively in $\widetilde{\mathfrak S}=\widetilde{\mathfrak S}_0$ and in $\widetilde{\mathfrak S}_{ns}$. We now pass to consider the torsion-free case.

\begin{lemma}\label{tf-rk<inf}\label{last'}\label{last}
Let $G$ be a torsion-free abelian group. Then:
\begin{itemize}
\item[(a)] $\widetilde{s_0}(G)=0$ if and only if $r_0(G)$ is finite;
\item[(b)] $\widetilde{s}(G)=\widetilde{ns}(G)$;
\item[(c)] $\widetilde{s}(G)=0$ if and only if $r_0(G)\leq1$ and no infinity appears in the type of $G$. 
\end{itemize}
\end{lemma}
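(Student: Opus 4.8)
The plan is to prove the three parts of Lemma \ref{tf-rk<inf} by combining the monotonicity of $\widetilde{s_0}$, $\widetilde{s}$ and $\widetilde{ns}$ under taking subgroups with the already-established results on torsion-free abelian groups of finite rank and on rank-one groups. Throughout I will use that each of $\widetilde s(-)$, $\widetilde{ns}(-)$ and $\widetilde{s_0}(-)$ takes values in $\{0,\infty\}$ and is monotone under subgroups, together with Lemma \ref{s=0->r-fin}, which already supplies the necessary rank restrictions.

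\medskip
\emph{Part (a).} For the ``only if'' direction, if $\widetilde{s_0}(G)=0$ then $r_0(G)$ is finite directly by Lemma \ref{s=0->r-fin}(a). For the ``if'' direction, suppose $r_0(G)=n<\infty$. Every subgroup $H\leq G$ is torsion-free of rank $\leq n$, hence of finite rank, so $s_0(H)=0$ by Lemma \ref{s0(fin-rank)=0}. Taking the supremum over all subgroups $H\leq G$ yields $\widetilde{s_0}(G)=0$.

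\medskip
\emph{Part (b).} Since $G$ is torsion-free, every subgroup $H\leq G$ is torsion-free, so every non-zero endomorphism of $H$ is injective and hence $s_0(H)=0$ by Proposition \ref{AAA}(c); this gives $s(H)=ns(H)$ via \eqref{s=ns+s0} for each such $H$. Taking suprema over $H\leq G$ gives $\widetilde s(G)=\widetilde{ns}(G)$. (Equivalently, one observes $\widetilde{s_0}(G)=0$ is the relevant case, while if $r_0(G)$ is infinite then both $\widetilde s(G)$ and $\widetilde{ns}(G)$ equal $\infty$ by Lemma \ref{s=0->r-fin}; either way the two coincide.)

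\medskip
\emph{Part (c).} By part (b) it suffices to treat $\widetilde s(G)$. For ``only if'', assume $\widetilde s(G)=0$. By Lemma \ref{s=0->r-fin}(b) we get $r_0(G)\leq 1$. If $r_0(G)=0$ then $G=0$ and the claim is vacuous; if $r_0(G)=1$, then $G$ itself is a rank-one torsion-free group with $s(G)\leq\widetilde s(G)=0$, so Corollary \ref{tf:rk1} forces no infinity in the type of $G$. For ``if'', suppose $r_0(G)\leq 1$ with no infinity in the type of $G$. Again $r_0(G)=0$ gives $G=0$ trivially, so assume $r_0(G)=1$. The key point is that every non-zero subgroup $H\leq G$ is itself a rank-one torsion-free group whose type is coordinatewise $\leq$ the type of $G$; since $G$ has no infinity in its type, neither does any such $H$. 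Hence $s(H)=0$ for every subgroup $H$ by Corollary \ref{tf:rk1}, and taking the supremum gives $\widetilde s(G)=0$.

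\medskip
The main obstacle, and the only non-routine point, is the claim in the ``if'' direction of part (c) that the absence of infinity in the type is inherited by every subgroup. This requires recalling that for a rank-one torsion-free group the type is computed from the $p$-heights of a fixed non-zero element, that passing to a subgroup can only decrease $p$-heights, and that an entry of the type equals $\infty$ precisely when $p^\omega H\neq 0$; thus $p^\omega G=0$ for all $p$ (which is what ``no infinity in the type'' means here) descends to $p^\omega H=0$ for every subgroup $H$. I would make this monotonicity of heights explicit, since it is the step that lets Corollary \ref{tf:rk1} be applied uniformly to all subgroups rather than only to $G$ itself.
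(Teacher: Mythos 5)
Parts (a) and (c) of your proposal are correct and essentially identical to the paper's own proof: (a) combines Lemma \ref{s=0->r-fin}(a) with Lemma \ref{s0(fin-rank)=0} exactly as the paper does, and (c) combines the rank restriction with Corollary \ref{tf:rk1} plus the observation that absence of infinities in the type passes to subgroups of a rank-one group --- this is precisely the paper's closing remark that for rank-one $A,B$ one has $A\leq B$ if and only if the type of $A$ is $\leq$ the type of $B$, so your extra care about height monotonicity is welcome but not a different route.

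Part (b), however, opens with a false claim. It is not true that every non-zero endomorphism of a torsion-free group is injective: for $H=\Z^2$ the projection $(x,y)\mapsto(x,0)$ is non-zero and non-injective. (The sentence you are echoing, from the proof of Theorem \ref{endorigid}, is valid there only because $\End(G)=\Z$ for endorigid $G$, so every non-zero endomorphism is multiplication by a non-zero integer.) The conclusion you draw from it --- that $s_0(H)=0$ for \emph{every} torsion-free $H$ --- is also false: $s_0(\Z^{(\N)})=\infty$ by Example \ref{Z}(d), the left Bernoulli shift admitting null strings. Indeed, if $s_0$ vanished on all torsion-free groups, then $\widetilde{s_0}(G)=0$ would hold for every torsion-free $G$, contradicting your own part (a). Your parenthetical ``equivalently'' remark is therefore not equivalent to the main argument: it is the only correct one of the two, and it coincides with the paper's proof --- if $r_0(G)$ is infinite, then $\widetilde{s}(G)=\widetilde{ns}(G)=\infty$ by Lemma \ref{s=0->r-fin} and the $\{0,\infty\}$ dichotomy of values; if $r_0(G)$ is finite, then $\widetilde{s_0}(G)=0$ by part (a), and \eqref{s=ns+s0-tilde} yields $\widetilde{s}(G)=\widetilde{ns}(G)$. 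The fix is simply to delete the first argument of (b) and promote the parenthetical case split to be the proof.
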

\begin{proof}
(a) If $\widetilde{s_0}(G)=0$, then $r_0(G)$ is finite by Lemma \ref{s=0->r-fin}(a).
If $r_0(G)$ is finite, then $r_0(H)$ is finite for every subgroup $H$ of $G$. So Lemma \ref{s0(fin-rank)=0} gives $s_0(H)=0$ for every $H$. This proves that $\widetilde{s_0}(G)=0$.

\smallskip\noindent
(b) If $r_0(G)$ is infinite, then $G$ contains a subgroup isomorphic to $\Z^{(\N)}$, which has $ns(\Z^{(\N)})=s(\Z^{(\N)})= \infty$ by Example \ref{Z}(d), so $\widetilde s(G)=\widetilde{ns}(G)=\infty$ as well. If $r_0(G)$ is finite, then $\widetilde{s_0}(G)=0$ thanks to part (a). By \eqref{s=ns+s0-tilde} we have $\widetilde{s}(G)=\widetilde{ns}(G)$.

\smallskip\noindent
(c) If $r_0(G)>1$, then $G$ contains an isomorphic copy of $\Z^2$ and $s(\Z^2)=\infty$ by Example \ref{Z}(b); hence $\widetilde s(G)=\infty$. Now Corollary \ref{tf:rk1} concludes the proof, since if $A, B$ are of rank $1$, then $A\leq B$ if and only if $t(A)\leq t(B)$.
\end{proof}




Let $\mathfrak F$ denote the class of all the torsion-free abelian groups. In view of Lemma \ref{last'}, for $G$ in $\mathfrak F$ the condition $\widetilde{ns}(G)=0$ is equivalent to $\widetilde s(G)=0$. So in this case $\widetilde{s_0}(G)=0$ is weaker than $\widetilde{ns}(G)=0$, as its value on $\Z^2$ shows; indeed, $\Z^2$ has finite torsion-free rank but strictly grater that $1$. In other words $\widetilde{\mathfrak S}_0\cap\mathfrak F\supsetneq\widetilde{\mathfrak S}\cap \mathfrak F= \widetilde{\mathfrak S}_{ns}\cap\mathfrak F$.

\medskip
Lemmas \ref{ss=0} and \ref{ns=0} characterize completely the classes $\widetilde{\mathfrak S}\cap \mathfrak T$, $\widetilde{\mathfrak S}_{ns} \cap\mathfrak T$ and $\widetilde{\mathfrak S}_0\cap \mathfrak T$, while Lemma \ref{tf-rk<inf} characterizes completely the classes $\widetilde{\mathfrak S}\cap \mathfrak F$, $\widetilde{\mathfrak S}_{ns} \cap\mathfrak F$ and $\widetilde{\mathfrak S}_0\cap \mathfrak F$. Since every abelian group is the extension of a torsion and a torsion-free abelian group, the last step for the complete characterization of $\widetilde{\mathfrak S}$, $\widetilde{\mathfrak S}_{ns} $ and $\widetilde{\mathfrak S}_0$  is to study whether these classes are closed under taking extensions.

In this direction we start proving the following inequalities for the hereditary string numbers with respect to the torsion part. Note that Corollary \ref{equ} of Theorem \ref{main-tilde} will show that actually equality holds (and that Lemma \ref{WADT'} is applied in the proof of Theorem \ref{main-tilde}).

\begin{lemma}\label{WADT-tilde}\label{WADT'}\label{in3}
Let $G$ be an abelian group. Then:
\begin{itemize}
\item[(a)] $\widetilde{s}(G)\leq \widetilde{s}(t(G))+\widetilde{s}(G/t(G));$
\item[(b)] $\widetilde{ns}(G)\leq \widetilde{ns}(t(G))+\widetilde{ns}(G/t(G));$
\item[(c)] $\widetilde{s_0}(G)\leq\widetilde{s_0}(t(G))+\widetilde{s_0}(G/t(G)).$
\end{itemize}
\end{lemma}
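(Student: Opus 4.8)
The plan is to prove the three inequalities separately, exploiting that every quantity involved takes values in $\{0,\infty\}$: each inequality is vacuous unless its right-hand side is $0$, so it suffices to show that $\widetilde{ns}(t(G))=\widetilde{ns}(G/t(G))=0$ forces $\widetilde{ns}(G)=0$, and analogously for $s$ and $s_0$. Unwinding the definition of the hereditary string numbers, this means proving $i(K)=0$ for every subgroup $K\leq G$, where $i$ denotes the relevant string number. For any such $K$ the torsion subgroup $t(K)=K\cap t(G)$ is fully invariant in $K$, one has $t(K)\leq t(G)$, and $K/t(K)\cong(K+t(G))/t(G)$ embeds into $G/t(G)$; hence $i(t(K))\leq\widetilde{i}(t(G))=0$ and, by Fact \ref{ent*}(a), $i(K/t(K))\leq\widetilde{i}(G/t(G))=0$. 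Thus the whole lemma reduces to an extension problem along the canonical sequence $0\to t(K)\to K\to K/t(K)\to0$.

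For part (c) this extension problem is already solved: since $t(K)$ is fully invariant in $K$, Lemma \ref{WADT} gives $s_0(K)\leq s_0(t(K))+s_0(K/t(K))=0$, so $\widetilde{s_0}(G)=0$.

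For part (b) I would argue directly. Fix $K$ and $\phi\in\End(K)$; by Lemma \ref{s-c} I may assume $\phi$ surjective (replacing $K$ by $\sc\phi$, whose torsion part still lies in $t(G)$ and whose torsion-free quotient still embeds into $G/t(G)$). Writing $T=t(K)$ and $\overline\phi$ for the induced \emph{surjective} endomorphism of $K/T$, the hypothesis $ns(\overline\phi)=0$ together with Proposition \ref{AAA}(b) yields $Q\Per(\overline\phi)=\sc\overline\phi=K/T$. Given $x\in K$, lifting the quasi-periodicity of $\overline x$ produces $i<j$ with $\phi^j(x)-\phi^i(x)\in T$; setting $x'=\phi^i(x)$, $k=j-i$ and $t=\phi^k(x')-x'\in T$, an easy induction gives $\phi^{mk}(x')=x'+\sum_{l=0}^{m-1}\theta^l(t)$, where $\theta=\phi^k\restriction_T$. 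The key point is that $t$ has finite order $N$, so the whole orbit $\{\theta^l(t):l\in\N\}$, and hence all of its partial sums, lie in $T[N]$; since $\widetilde{ns}(t(G))=0$ forces all $p$-ranks of $t(G)$, and so of $T$, to be finite by Lemma \ref{ns=0}, the bounded-exponent subgroup $T[N]$ is finite. Therefore $\{\phi^{mk}(x'):m\in\N\}$ is finite, whence $\{\phi^n(x):n\in\N\}$ is finite, i.e. $x\in Q\Per(\phi)$. This gives $Q\Per(\phi)=K=\sc\phi$, so $ns(\phi)=0$ by Proposition \ref{AAA}(b), and $\widetilde{ns}(G)=0$ follows.

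Finally, part (a) is a formal consequence of (b) and (c): adding the two inequalities just established and using the identity $\widetilde{s}=\widetilde{ns}+\widetilde{s_0}$ of \eqref{s=ns+s0-tilde} (applied to $G$, to $t(G)$ and to $G/t(G)$) yields exactly (a). The main obstacle is the $ns$-case (b), and within it the crucial finiteness input: although $T$ itself may well be infinite, the correction term of the lifted orbit is annihilated by a fixed integer and therefore lives in the finite subgroup $T[N]$. This is precisely where the finiteness of the $p$-ranks of $t(G)$ enters, and it is what makes this particular extension well-behaved for the hereditary non-singular string number, even though $\Sns$ is not closed under extensions in general.
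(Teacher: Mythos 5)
Your proposal is correct and follows essentially the same route as the paper: reduce to an arbitrary subgroup $K$ with $t(K)\leq t(G)$ and $K/t(K)$ embedded in $G/t(G)$, dispose of (c) via Lemma \ref{WADT}, deduce (a) from (b), (c) and \eqref{s=ns+s0-tilde}, and prove (b) by passing to the surjective core, lifting quasi-periodicity modulo $t(K)$, and observing that the correction terms lie in a bounded-exponent subgroup of $t(K)$ which is finite because Lemma \ref{ns=0} forces all $p$-ranks of $t(G)$ to be finite. The only cosmetic difference is that you establish $Q\Per(\phi)=\sc\phi$ elementwise and invoke Proposition \ref{AAA}(b), whereas the paper shows every string is singular via Lemma \ref{sing}; your handling of the shift from $\phi^i(x)$ back to $x$ is in fact slightly more careful than the paper's ``without loss of generality $j=0$.''
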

\begin{proof}
(a) It will follow by (b)-(c) and the equality $\widetilde{s}(G)=\widetilde{ns}(G)+\widetilde{s_0}(G)$.

\smallskip
(b) It suffices to prove that $\widetilde{ns}(G)=0$, if $\widetilde{ns}(t(G))=0$ and $\widetilde{ns}(G/t(G))=0$. So assume that $\widetilde{ns}(t(G))=0$ and $\widetilde{ns}(G/t(G))=0$. In view of Lemma \ref{ns=0}, $\widetilde{ns}(t(G))=0$ means that all the $p$-ranks of $t(G)$ are finite.
Let $H$ be an arbitrary subgroup of $G$; we have to prove that $ns(H)=0$. Since $t(H)\subseteq t(G)$, it follows that $r_p(H)$ is finite for every prime $p$. Moreover, $ns(H/t(H))=0$, as $H/t(H)\cong (H+t(G))/t(G)\leq G/t(G)$. 

Let $\phi\in\End(H)$. By Lemma \ref{s-c} we can suppose without loss of generality that $\phi$ is surjective, that is, $H=\sc\phi$. Assume that $S=\{x_n\}_{n\in\N}$ is a string of $\phi$ in $H$. 
Since $ns(H/t(H))=0$, in particular $ns(\overline\phi)=0$, where $\overline{\phi}:H/t(H)\to H/t(H)$ is the endomorphism induced by $\phi$ and $\overline\phi$ is surjective as $\phi$ is surjective. By Proposition \ref{AAA}(b)  the element $x_0+t(H)$ is a quasi-periodic point of $\overline\phi$. This means that there exist $j\in\N$ and $k\in\N_+$  such that $\phi^{j+k}(x_0)-\phi^{j}(x_0)=t\in t(H)$. We can suppose without loss of generality that $j=0$. Consider now the trajectory $T=T(\phi,\langle t\rangle)$ and let $m\in\N$ be the order of $t$; then $T\subseteq t(H)[m]$. Since all the $p$-ranks of $t(H)$ are finite, the fully invariant subgroup $t(H)[m]$ is finite; therefore $T$ is finite as well. The set $\{\phi^{sk}(x_0):s\in\N\}$ is contained in $x_0+T$, which is finite. Consequently, $x_0$ is a quasi-periodic point of $\phi$ and so $S$ is singular by Lemma \ref{sing}. Therefore, we have proved that every string of $\phi$ is singular. Consequently, $ns(\phi)=0$. Since $\phi$ was chosen arbitrarily, this argument shows that $ns(H)=0$. Hence, as $H$ is an arbitrary subgroup of $G$, we can conclude that $\widetilde{ns}(G)=0$.

\smallskip
(c) It suffices to show that, if $\widetilde{s_0}(G)=\infty$, then at least one between $\widetilde{s_0}(t(G))$ and $\widetilde{s_0}(G/t(G))$ is infinite.
So assume that $\widetilde{s_0}(G)=\infty$. Then there exists a subgroup $K$ of $G$ such that $s_0(K)=\infty$. Since $K\cap t(G)=t(K)$ is fully invariant in $K$, Lemma \ref{WADT} aplies to give $s_0(K)\leq s_0(K\cap t(G)) + s_0(K/K\cap t(G))$. If $s_0(K)=\infty$, then $\widetilde{s_0}(K)=\infty$. If $s_0(K/K\cap t(G))=\infty$, since $K/K\cap t(G)\cong K+t(G)/t(G)$, $s_0(K+t(G)/t(G))=\infty$ as well, and so $\widetilde{s_0}(G/t(G))=\infty$.
\end{proof}

We are now ready to prove the main theorem of this section:

\begin{theorem}\label{main-tilde}
Let $G$ be an abelian group. Then:
\begin{itemize}
\item[(a)] $\widetilde{s}(G)=0$ if and only if $t_p(G)$ is finite for every prime $p$ and $G/t(G)$ is a torsion-free group of torsion-free rank $1$ such that no infinity appears in its type; 
\item[(b)] $\widetilde{ns}(G)=0$ if and only if $r_p(G)$ is finite for every prime $p$ and $G/t(G)$ is a torsion-free group of torsion-free rank $1$ such that no infinity appears in its type;
\item[(c)] $\widetilde{s_0}(G)=0$ if and only if $t_p(G)$ is finite for every prime $p$ and $r_0(G)$ is finite.
\end{itemize}
\end{theorem}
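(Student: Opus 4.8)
The plan is to reduce every case to the short exact sequence $0\to t(G)\to G\to G/t(G)\to0$, treating the torsion subgroup $t(G)$ and the torsion-free quotient $\bar G:=G/t(G)$ with the results already proved for those two subclasses, and then splicing the two cases together. Throughout I would use that $\bar G$ is torsion-free with $r_0(\bar G)=r_0(G)$, and that $t_p(t(G))=t_p(G)$ and $r_p(t(G))=r_p(G)$. I would prove (c) first, as it is the easiest. For the forward implication, $t(G)\leq G$ together with the subgroup-closure of $\widetilde{\mathfrak S}_0$ gives $\widetilde{s_0}(t(G))=0$, whence $t_p(G)$ is finite for every $p$ by Lemma \ref{ss=0}, while $r_0(G)$ is finite by Lemma \ref{s=0->r-fin}(a). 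For the converse, those two hypotheses give $\widetilde{s_0}(t(G))=0$ (Lemma \ref{ss=0}) and $\widetilde{s_0}(\bar G)=0$ (Lemma \ref{tf-rk<inf}(a)), so $\widetilde{s_0}(G)=0$ by the inequality in Lemma \ref{WADT-tilde}(c).

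For (b) the converse implication is again immediate: the hypotheses give $\widetilde{ns}(t(G))=0$ by Lemma \ref{ns=0} and $\widetilde{ns}(\bar G)=\widetilde s(\bar G)=0$ by Lemma \ref{tf-rk<inf}(b,c), so Lemma \ref{WADT-tilde}(b) yields $\widetilde{ns}(G)=0$. The forward implication of (b) is where the real work lies, and I expect it to be the main obstacle. Finiteness of the $p$-ranks follows from subgroup-closure applied to $t(G)$ and Lemma \ref{ns=0}, and $r_0(G)\leq1$ follows from Lemma \ref{s=0->r-fin}(b); the delicate point is to show that no infinity appears in the type of $\bar G$, i.e. that $p^\omega\bar G=0$ for every prime $p$. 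This is subtle precisely because $\widetilde{ns}$ is monotone under subgroups but not under arbitrary quotients, so I cannot simply push the hypothesis $\widetilde{ns}(G)=0$ down to $\bar G$. The point is that the witnessing endomorphism is the one that does lift: assuming $p^\omega\bar G\neq0$, the proof of Proposition \ref{tf-pomega} shows $ns(\mu_p\restriction_{\bar G})=\infty$, since $\sc{\mu_p}=p^\omega\bar G\neq0$. As $t(G)$ is fully invariant and $\mu_p$ on $G$ induces $\mu_p$ on $\bar G$, the monotonicity of $ns$ under induced endomorphisms on quotients (Fact \ref{ent*}(d)) forces $ns(\mu_p\restriction_G)=\infty$, contradicting $ns(G)\leq\widetilde{ns}(G)=0$. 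Hence $p^\omega\bar G=0$ for every $p$.

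Finally, (a) follows formally from (b), (c) and the identity \eqref{s=ns+s0-tilde}: $\widetilde s(G)=0$ holds exactly when $\widetilde{ns}(G)=0$ and $\widetilde{s_0}(G)=0$ simultaneously. Intersecting the two characterizations, and noting that $t_p(G)$ finite for all $p$ already forces every $r_p(G)$ finite (as $G[p]\subseteq t_p(G)$) while $r_0(\bar G)\leq1$ already forces $r_0(G)$ finite, the combined condition collapses to exactly the statement of (a): $t_p(G)$ finite for every prime $p$ together with $\bar G$ of torsion-free rank $\leq1$ with no infinity in its type. Here, as in Lemma \ref{tf-rk<inf}(c), ``rank $1$'' is read as ``rank $\leq 1$'', so that the degenerate rank-$0$ case of a torsion group is included. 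Apart from the type condition in the forward direction of (b) highlighted above, every step is routine bookkeeping with the cited lemmas.
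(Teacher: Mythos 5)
Your proof is correct, and almost all of it coincides with the paper's: parts (a) and (c), the ``if'' direction of (b), and the derivation of the rank conditions in the ``only if'' direction of (b) run through essentially the same lemmas the paper uses (Lemmas \ref{s=0->r-fin}, \ref{ss=0}, \ref{ns=0}, \ref{tf-rk<inf}, \ref{WADT-tilde} and \eqref{s=ns+s0-tilde}), and your reading of ``rank $1$'' as ``rank $\leq 1$'' is the intended one. Where you genuinely diverge is at exactly the point you flagged as delicate: proving $p^\omega(G/t(G))=0$ when $\widetilde{ns}(G)=0$. The paper does this by an explicit construction: finiteness of $r_p(G)$ gives a splitting $G=t_p(G)\oplus C$ with $t_p(C)=0$, so that $t(C)$ is $p$-divisible; assuming $p^\omega(C/t(C))\neq 0$, a non-singular string $\{x_n+t(C)\}_{n\in\N}$ of $\overline{\mu_p}$ exists by \cite[Corollary 3.21(b)]{DGV}, and the representatives are then corrected by torsion elements $s_n$ (chosen with $p^n s_n = p^n x_n - x_0$) so that $y_n=x_n-s_n$ forms an honest non-singular string of $\mu_p$ on the subgroup $C$, giving $ns(C)=\infty$ and the contradiction. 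You instead apply Fact \ref{ent*}(d) to $\mu_p$ and the fully invariant (hence $\mu_p$-invariant) subgroup $t(G)$: since $\mu_p$ on $G$ induces $\mu_p$ on $G/t(G)$, and $ns(\overline{\mu_p})=\infty$ by the proof of Proposition \ref{tf-pomega} applied to the torsion-free quotient, monotonicity of $ns$ under induced endomorphisms on quotients forces $ns(\mu_p)=\infty$, contradicting $ns(G)\leq\widetilde{ns}(G)=0$. This is legitimate and shorter: it delegates the hard work --- lifting a non-singular string along a quotient --- to the general theorem \cite[Theorem 4.9]{DGV} that underlies Fact \ref{ent*}(d). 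What the paper's hands-on route buys is independence from that general lifting theorem: it carries out the lift explicitly in the one situation where it is needed, where the $p$-divisibility of $t(C)$ makes the correction of representatives concrete. Both arguments are sound; yours is the more economical given the facts the paper already quotes.
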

\begin{proof}
(a) Follows from (b) and (c) using (\ref{s=ns+s0-tilde}).

\smallskip
(b) Let $\widetilde{ns}(G)=0$; then Lemma \ref{s=0->r-fin} implies that $r_p(G)$ is finite for every prime $p$ and $r_0(G/t(G))\leq 1$. We want to show that $p^{\omega}(G/t(G))=0$ for every prime $p$. So fix an arbitrary prime $p$ and suppose, looking for a contradiction, that $p^{\omega}(G/t(G))\neq 0$. 

Since the $p$-rank of $G$ is finite, then $G=t_p(G)\oplus C$ for some subgroup $C$ of $G$. Furthermore, $C/t(C)\cong G/t(G)$ and $t_p(C)=0$. Consider the endomorphism
$$\mu_p:C\to C, \ \ \ x\mapsto px.$$
We find a non-singular string of $\mu_p$.

Consider the endomorphism induced by $\mu_p$ on $C/t(C)$, that is, 
$$\overline{\mu_p}:C/t(C)\to C/t(C),\ \ \ x+t(C)\mapsto px+t(C).$$
Since $p^{\omega}(C/t(C))\neq 0$, $\overline{\mu_p}$ admits a non-singular string $\overline{S}=\{x_n+t(C)\}_{n\in\N}$ by \cite[Corollary 3.21(b)]{DGV}.  This implies $p^nx_{n}-x_0\in t(C)$ for every $n\in\N.$
Let $$t_n=p^nx_{n}-x_0\in t(C),\ \ \ \text{for every}\ n\in\N.$$
By construction $t_p(C)=0$, so $t(C)$ is $p$-divisible and then there exist $s_1,\dots,s_n,\ldots\in t(C)$ such that $p^ns_n=t_n$ for all $n\in\N$. 
Set $y_n=x_n-s_n$ for every $n\in\N$; these are torsion-free elements such that $p^ny_n=x_0$ for every $n\in\N$.

We verify that $S=\{y_n\}_{n\in\N}$ is a non-singular string of $\mu_p$. 
First we see that $S$ is a pseudostring of $\mu_p$. Indeed, by the definition of the $y_n$, we have
$$p^n(py_{n+1}-y_n)=p^{n+1}y_{n+1}-p^ny_n=y_0-y_0=0,$$ 
and so $py_{n+1}-y_n\in t_p(C)=0$; therefore, $py_{n+1}=y_n$ for every $n\in\N$, which proves that $S$ is a pseudostring of $\mu_p$.

That $S$ is a non-singular string follows from the fact that $\overline{S}$ is a non-singular string, since $x_n+t(C)=y_n+t(C)$ for every $n\in\N$ (i.e., $\overline S=\{y_n+t(C)\}_{n\in\N}$).
This implies that $\widetilde{ns}(C)=\infty$, and so $\widetilde{ns}(G)=\infty$, that is the contradiction we are looking for.

To prove the converse implication, suppose that $r_p(G)$ is finite for every prime $p$ and that $G/t(G)$ has torsion-free rank $1$ and no infinity appears in its type. Then $\widetilde{ns}(t(G))=0$ and $\widetilde{ns}(G/t(G))=0$ respectively by Lemma \ref{ns=0} and by Lemma \ref{last}. Now we can conclude by Lemma \ref{WADT'}(b) that $\widetilde{ns}(G)=0$.

\smallskip
(c) If $\widetilde{s_0}(G)=0$, then $t_p(G)$ is finite for every prime $p$ by Lemma \ref{ss=0}, and $r_0(G/t(G))$ is finite by Lemma \ref{s=0->r-fin}(a). 

To prove the converse implication, suppose that $t_p(G)$ is finite for every prime $p$ and that $r_0(G/t(G))$ is finite. Then $\widetilde{s_0}(t(G))=0$ and $\widetilde{s_0}(G/t(G))=0$ respectively by Lemma \ref{ss=0} and by Lemma \ref{last'}. Now we can conclude that $\widetilde{s_0}(G)=0$ by Lemma \ref{WADT-tilde}(c).
\end{proof}

As a first corollary of this theorem, we have that the inequalities in Lemma \ref{WADT'} are equalities:

\begin{corollary}\label{equ}
Let $G$ be an abelian group. Then:
\begin{itemize}
\item[(a)]$\widetilde{s}(G)=\widetilde{s}(t(G))+\widetilde{s}(G/t(G))$;
\item[(b)]$\widetilde{ns}(G)=\widetilde{ns}(t(G))+\widetilde{ns}(G/t(G))$;
\item[(c)]$\widetilde{s_0}(G)=\widetilde{s_0}(t(G))+\widetilde{s_0}(G/t(G))$.
\end{itemize}
\end{corollary}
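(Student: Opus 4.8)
The plan is to combine the inequalities already established in Lemma \ref{WADT'} with the explicit characterizations obtained in Theorem \ref{main-tilde}. Since all three hereditary string numbers take values in $\{0,\infty\}$, proving the displayed equalities amounts to proving the reverse inequalities $\widetilde{s}(G)\geq\widetilde{s}(t(G))+\widetilde{s}(G/t(G))$, and analogously for $\widetilde{ns}$ and $\widetilde{s_0}$; concretely, I must show that $\widetilde{i}(G)=0$ forces both $\widetilde{i}(t(G))=0$ and $\widetilde{i}(G/t(G))=0$, where $\widetilde{i}$ stands for any of the three functions.

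First I would dispose of the torsion summand. Since $t(G)$ is a subgroup of $G$ and each hereditary string number is by construction monotone under taking subgroups, we have $\widetilde{i}(t(G))\leq\widetilde{i}(G)$ automatically; hence $\widetilde{i}(G)=0$ immediately yields $\widetilde{i}(t(G))=0$. This settles the torsion factor for all three invariants with no further work.

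The remaining point concerns the quotient $G/t(G)$, which is torsion-free but not a subgroup of $G$, so monotonicity does not apply directly; here the structural descriptions of Theorem \ref{main-tilde} do the job. For part (c): if $\widetilde{s_0}(G)=0$, then Theorem \ref{main-tilde}(c) gives that every $t_p(G)$ is finite and $r_0(G)$ is finite; since $r_0(G/t(G))=r_0(G)$ is finite, Lemma \ref{tf-rk<inf}(a) yields $\widetilde{s_0}(G/t(G))=0$. For part (b): if $\widetilde{ns}(G)=0$, then Theorem \ref{main-tilde}(b) tells us that $G/t(G)$ is torsion-free of torsion-free rank $1$ with no infinity in its type, so Lemma \ref{tf-rk<inf}(c) gives $\widetilde{s}(G/t(G))=0$, whence $\widetilde{ns}(G/t(G))=0$ by Lemma \ref{tf-rk<inf}(b). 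Combining each of these conclusions with the subgroup bound above and with the upper bound of Lemma \ref{WADT'} produces the equalities in (b) and (c).

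Finally, part (a) follows from (b) and (c) together with the additivity identity \eqref{s=ns+s0-tilde}, exactly as (a) is deduced from (b)--(c) within Lemma \ref{WADT'}. I do not expect any genuine obstacle: the only mildly delicate feature is that $G/t(G)$ is a quotient rather than a subgroup, so the reverse inequality cannot be read off from monotonicity and must instead be routed through the explicit classification in Theorem \ref{main-tilde} --- which is precisely why this statement is presented as a corollary of that theorem rather than proved in isolation.
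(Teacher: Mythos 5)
Your proposal is correct and takes essentially the same route as the paper: the corollary is stated there precisely as the observation that the inequalities of Lemma~\ref{WADT'} become equalities once Theorem~\ref{main-tilde} is available, and your argument supplies exactly the missing reverse inequalities from that theorem (via Lemma~\ref{tf-rk<inf} for the torsion-free quotient and subgroup monotonicity for $t(G)$), deducing (a) from (b), (c) and \eqref{s=ns+s0-tilde} just as in Lemma~\ref{WADT'}. No gaps.
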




In the next corollary we show that hereditarily Hopfian abelian groups are precisely the groups in $\widetilde{\mathfrak S}_0$. So, from Theorem \ref{main-tilde}(c), we get the following structure theorem for hereditarily Hopfian abelian groups.

\begin{corollary}\label{her-H}
Let $G$ be an abelian group. Then the following conditions are equivalent:
\begin{itemize}
\item[(a)]$\widetilde{s_0}(G)=0$;
\item[(b)] $t_p(G)$ is finite for every prime $p$ and $r_0(G)$ is finite;
\item[(c)]$G$ is hereditarily Hopfian.
\end{itemize}
\end{corollary}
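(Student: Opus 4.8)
The plan is to prove the three equivalences through the cycle (a)$\Leftrightarrow$(b), (a)$\Rightarrow$(c), (c)$\Rightarrow$(b). The equivalence (a)$\Leftrightarrow$(b) is essentially free: condition (b) is a verbatim restatement of the right-hand side of Theorem \ref{main-tilde}(c), so there is nothing new to prove there. Hence the entire content of the corollary is the characterization of hereditary Hopficity, and the two implications to carry out are (a)$\Rightarrow$(c) and (c)$\Rightarrow$(b).

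For (a)$\Rightarrow$(c) I would argue straight from the definitions: if $\widetilde{s_0}(G)=0$ then $s_0(H)=0$ for every subgroup $H\leq G$, and Theorem \ref{Hopf} converts each such equality into the statement that $H$ is Hopfian; since this holds for all $H\leq G$, the group $G$ is hereditarily Hopfian. This step is short and uses no structure theory.

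The real work is (c)$\Rightarrow$(b), which I would prove by contraposition: assuming (b) fails, I would exhibit a non-Hopfian subgroup of $G$, whence $G$ is not hereditarily Hopfian. The non-Hopfian building blocks are already available. For any nonzero abelian group $K$ the left Bernoulli shift ${}_K\beta$ is a surjective, non-injective endomorphism of $K^{(\N)}$, so $K^{(\N)}$ is non-Hopfian; in particular $\Z(p)^{(\N)}$ and $\Z^{(\N)}$ are non-Hopfian. Likewise $\mu_p$ is a surjective, non-injective endomorphism of $\Z(p^\infty)$, so $\Z(p^\infty)$ is non-Hopfian. Now if (b) fails, then either $r_0(G)$ is infinite, in which case $G$ contains a free subgroup isomorphic to $\Z^{(\N)}$, or $t_p(G)$ is infinite for some prime $p$. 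In the latter case I would split on the $p$-rank: if $r_p(G)$ is infinite then $G[p]$ is infinite and $G$ contains a copy of $\Z(p)^{(\N)}$, while if $r_p(G)$ is finite I claim that $t_p(G)$ has nonzero divisible part and hence contains $\Z(p^\infty)$.

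The main obstacle is precisely this last subcase, which rests on the structural fact that a reduced abelian $p$-group of finite $p$-rank is finite. Writing $t_p(G)=d\oplus R$ with $R$ reduced, a basic subgroup $B$ of $R$ is a direct sum of $r_p(B)\leq r_p(R)<\infty$ cyclic $p$-groups, hence finite; a finite (so bounded) pure subgroup is a direct summand, giving $R=B\oplus D$ with $D$ divisible, and reducedness forces $D=0$, so $R=B$ is finite. Consequently, if $t_p(G)$ is infinite of finite $p$-rank, the divisible summand $d$ is nonzero, i.e. $\Z(p^\infty)\leq t_p(G)\leq G$. I would isolate this as a small lemma (or cite the relevant facts in \cite{F}), since it is the only place where $p$-group structure theory enters; every other ingredient is a direct appeal to Theorem \ref{main-tilde} and Theorem \ref{Hopf}.
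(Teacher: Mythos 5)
Your proposal is correct; the first two implications coincide with the paper's, but you close the equivalence by a genuinely different route. The paper's third implication is (c)$\Rightarrow$(a), proved purely formally via the surjective core: for a subgroup $H\leq G$ and $\phi\in\End(H)$, the restriction $\phi\restriction_{\sc\phi}$ is a surjective endomorphism of $\sc\phi$, which is itself a subgroup of $G$ and hence Hopfian by (c); thus $\phi\restriction_{\sc\phi}$ is injective, i.e.\ $\ker\phi\cap\sc\phi=0$, and Proposition \ref{AAA}(c) yields $s_0(\phi)=0$, whence $\widetilde{s_0}(G)=0$. No structure theory is involved, and the equivalence (a)$\Leftrightarrow$(c) is thereby established independently of Theorem \ref{main-tilde}, which enters only to bring in (b). You instead prove (c)$\Rightarrow$(b) by contraposition, exhibiting a non-Hopfian subgroup in each failure case of (b): $\Z^{(\N)}$ when $r_0(G)$ is infinite, $\Z(p)^{(\N)}$ when some $r_p(G)$ is infinite, and $\Z(p^\infty)$ when $t_p(G)$ is infinite of finite $p$-rank; this last case rests on the fact that a reduced abelian $p$-group of finite $p$-rank is finite, which you prove correctly (a basic subgroup $B$ of the reduced part $R$ has at most $r_p(R)$ cyclic summands, hence is finite; being bounded and pure it is a direct summand, with divisible complement isomorphic to $R/B$, which reducedness forces to vanish). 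Your route costs more, since it needs these witnesses plus a dose of $p$-group structure theory, but it buys a direct, string-free proof that hereditarily Hopfian abelian groups have finite $p$-components and finite torsion-free rank --- an implication the paper obtains only by detouring through the string-number condition (a); conversely, the paper's surjective-core argument is shorter and uniform, with no case analysis at all.
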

\begin{proof}
(a)$\Leftrightarrow$(b) is Theorem \ref{main-tilde}(c).

\smallskip
(a)$\Rightarrow$(c) Suppose $\widetilde{s_0}(G)=0$. So $s_0(H)=0$ for every subgroup $H$ of $G$. Therefore $G$ is hereditarily Hopfian by Theorem \ref{Hopf}.

\smallskip
(c)$\Rightarrow$(a) On the other hand, if $G$ is hereditarily Hopfian, let $H$ be a subgroup of $G$ and $\f\in\End(H)$. Since $\f\restriction_{\sc\f}:\sc\f\to\sc\f$ is surjective, and $\sc\f$ is Hopfian, $\f\restriction_{\sc\f}$ is injective. By Proposition \ref{AAA}(c), $s_0(\f)=0$. This proves that $s_0(H)=0$ and so that $\widetilde{s_0}(G)=0$.
\end{proof}

\medskip
We end this section adding to Table \ref{table1} the values of the hereditarily string numbers of the examples considered there:

\begin{center}
\begin{tabular}{|c|ccc|ccc|}
\hline
 & $s(-)$ & $ns(-)$ & $s_0(-)$ & $\widetilde s(-)$ & $\widetilde{ns}(-)$ & $\widetilde{s_0}(-)$ \\
 \hline
$\Z$ & $0$ & $0$ & $0$ & $0$ & $0$ & $0$ \\
$\Z^2$ & $\infty$ & $\infty$ & $0$ & $\infty$ & $\infty$ & $0$ \\
$\Q$ & $\infty$ & $\infty$ & $0$ & $\infty$ & $\infty$ & $0$ \\
$\mathbb J_p$ & $\infty$ & $\infty$ & $0$ & $\infty$ & $\infty$ & $\infty$ \\
$\Z(p^\infty)$ & $\infty$ & $0$ & $\infty$ & $\infty$  & $0$ & $\infty$ \\
$\Q/\Z$ & $\infty$ & $0$ & $\infty$ & $\infty$  & $0$ & $\infty$  \\
$B_p$ & $\infty$ & $\infty$ & $\infty$ & $\infty$ & $\infty$ & $\infty$ \\
$K^{(\N)}$ & $\infty$ & $\infty$ & $\infty$ & $\infty$ & $\infty$  & $\infty$ \\
\hline
\end{tabular}
\captionof{table}{String numbers and hereditary string numbers}\label{table2}
\end{center}

\end{document}